\newtheorem{thmA}{Theorem}
\newtheorem{corA}[thmA]{Corollary}
\newtheorem{propA}[thmA]{Proposition}
\newtheorem{theorem}{Theorem}[section]
\newtheorem{lemma}[theorem]{Lemma}
\newtheorem{prop}[theorem]{Proposition}
\newtheorem{proposition}[theorem]{Proposition}
\newtheorem{cor}[theorem]{Corollary}
\newtheorem{corollary}[theorem]{Corollary}
\theoremstyle{remark}
\newtheorem{remark}[theorem]{Remark}
\newtheorem{example}[theorem]{Example}
\theoremstyle{definition}
\newtheorem{definition}[theorem]{Definition}
\def\X{\mathfrak{X}}
\def\<{\langle}
\def\>{\rangle}
\def\-{\overline}
\def\G{\Gamma}
\def\g{\gamma}
\def\e{\varepsilon}
\def\imm{\rm{im}}
\def\distLX{{\rm{dist}}_L^{\X(F)}}
\def\dist{{\rm{dist}}}
\def\vol{\rm{vol}}
\def\A{{\rm{Aug}}}
\def\P{{\mathcal{P}}}
\def\l{\ell}
\def\f{\delta}
\numberwithin{equation}{section}
\newcommand{\FP}{{\rm{FP}}}
\newcommand{\N}{\mathbb{N}}
\newcommand{\Z}{\mathbb{Z}}
\newcommand{\Q}{\mathbb{Q}}
\begin{document}

\title[Weak commutativity and Dehn functions]
 {Weak commutativity, virtually nilpotent groups, and Dehn functions}  

\author{Martin R. Bridson}

\address{Mathematical Institute,
University of Oxford, 
Andrew Wiles Building, ROQ,
Oxford OX2 6GG,
United Kingdom} 
\email{bridson@maths.ox.ax.uk}

\author{Dessislava H. Kochloukova}

\address
{Department of Mathematics, State University of Campinas (UNICAMP), 13083-859, Campinas, SP, Brazil} 
\email{desi@ime.unicamp.br}
\thanks{The first author was supported in part by a Wolfson Research Merit Award from the Royal Society.
The second author was supported in part by grants 2017/17320-9, 2018/23690-6 from  FAPESP and 401089/2016-9 CNPq, Brazil}

\subjclass{Primary 20J05, 20E22; Secondary  20F10, 20F65, 20F18, 20F45}  

\keywords{Finitely presented groups, weak commutativity, Sidki double, Dehn functions, virtually nilpotent, growth of groups,  Engel condition}  

\begin{abstract} The group $\X(G)$ is obtained from $G\ast G$ by forcing
	each element $g$ in the first free factor to commute with the copy of $g$ in the second free factor.  
	We make significant additions to the list of properties that the functor $\X$ is known to  preserve. We also investigate
	the geometry and complexity of the word problem for $\X(G)$. 
	Subtle features of $\X(G)$ are encoded
	in a normal abelian subgroup $W<\X(G)$ that is a module over $\Z Q$, where
	$Q= H_1(G,\Z)$. 
	We establish a structural result for this module and illustrate
	its utility by proving that $\X$ preserves virtual nilpotence, the Engel condition, and growth type -- polynomial,   exponential, or intermediate. We also use it to
	establish  isoperimetric inequalities for $\X(G)$ when $G$ lies in a class that includes 
	Thompson's group $F$ and all non-fibered K\"ahler groups. The word problem is solvable
in $\X(G)$ if and only if it is solvable in $G$. 
The Dehn function of $\X(G)$ is bounded below by a cubic polynomial if $G$ maps onto a non-abelian free group.
\end{abstract}

\maketitle

\section*{Introduction}

In \cite{Said} Sa\"{i}d Sidki defined {{ } the weak commutativity} functor $\X$ {{ } which} assigns to a group $G$ the group
$\X(G)$  obtained from the free product $G\ast G$ by forcing
each element $g$ in the first free factor to commute with the copy of $g$ in the second free factor.
More precisely, taking a second copy $\-{G}$ of $G$ and fixing an isomorphism  $g\mapsto\-{g}$, one
defines $\X(G)$ to be the quotient of the free product $G\ast\-{G}$
by the normal subgroup $\<\!\langle   [g, \-g] :  g \in G \rangle\!\>$. 
At first glance, it seems that if $G$ is infinite then one is likely to require infinitely many relations
to define $\X(G)$, even if $G$ is finitely presented. 
But in \cite{BK1} we proved
that if $G$ is finitely presented then so too is $\X(G)$.  
Thus Sidki's construction
provides an intriguing new source of finitely presented groups. 

Our purpose in this article is two-fold: we investigate the complexity
of the word problem for $\X(G)$, with emphasis on Dehn functions and isoperimetric inequalities, and we make significant additions to the list of properties
that the functor $\X$ is known to preserve.  
 
Previous investigations have already shown
that the functor $\X$ preserves certain interesting classes of groups.
Sidki himself
\cite[Thm.~C]{Said} showed that if $G$ lies in
any of the following classes then $\X(G)$ lies in the same class: finite $\pi$-groups, where $\pi$ is a set of primes; finite nilpotent groups; solvable groups.
Gupta, Rocco and Sidki  \cite{G-R-S} proved that the class of finitely generated nilpotent groups is closed
under $\X$, and 
Lima and Oliveira \cite{LO} proved the same for polycyclic-by-finite groups.  
In \cite{KochSidki} Kochloukova and Sidki proved that if $G$ is a soluble group of type $\FP_{\infty}$ then $\X(G)$ is a soluble group of type $\FP_{\infty}$, but
in \cite{BK1} we proved  that when $F$ is a non-abelian free group,
$\X(F)$ is not of type $\FP_{\infty}$. 

The emergence of unexpected behaviour in $\X(G)$ is restricted almost entirely to 
groups with infinite abelianisation, as the following pair of contrasting results illustrates.

\begin{propA}
	If $G$ is a word-hyperbolic group that is perfect, then $\X(G)$ is biautomatic.
	In particular, $\X(G)$ is of type ${\rm{FP}}_\infty$ and satisfies a quadratic isoperimetric inequality.
\end{propA}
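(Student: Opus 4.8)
The plan is to reduce, via the structure theory of $\X(G)$, to the Neumann--Reeves theorem on central extensions of word-hyperbolic groups. Since a word-hyperbolic group is finitely presented, $\X(G)$ is finitely presented by \cite{BK1}, and since $G$ is perfect, $Q=H_1(G,\Z)=0$. First I would spell out what this degeneration does to the module $W$: with $\Z Q=\Z$ the conjugation action of $\X(G)$ on $W$ is trivial, so $W$ is \emph{central} in $\X(G)$, and it is a finitely generated abelian group (being finitely generated as a $\Z Q=\Z$-module; alternatively, because $\X(G)$ and $\X(G)/W$ are both finitely presented). I also use that $\X(G)/W$ is a direct product $G_1\times\dots\times G_k$ of copies of $G$: by the structure theory $W$ is the kernel of the map of $\X(G)$ onto the subdirect product of $G\times G\times G$ cut out by the three canonical retractions $\X(G)\to G$, and when $G$ is perfect the congruences defining that subdirect product are vacuous. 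Hence $\X(G)$ is a central extension of a direct product of word-hyperbolic groups by a finitely generated abelian group.

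The second step is to realise $\X(G)$ as a central product. Let $\pi\colon\X(G)\to G_1\times\dots\times G_k$ be the quotient map and $E_i=\pi^{-1}(G_i)$, so that $1\to W\to E_i\to G_i\to 1$ is a central extension of a word-hyperbolic group by a finitely generated abelian group. I claim the $E_i$ pairwise commute in $\X(G)$: for $x\in E_i$ and $y\in E_j$ with $i\neq j$ the commutator $[x,y]$ lies in $W$, and since $W$ is central the map $x\mapsto[x,y]$ is then a homomorphism $E_i\to W$ that is trivial on $W$, hence factors through $H_1(G_i,\Z)=0$; so $[x,y]=1$. Since $E_i\cap E_j=W$ and $\X(G)=E_1\cdots E_k$, this exhibits $\X(G)$ as the central product of the $E_i$ over $W$, equivalently as the quotient of the direct product $E_1\times\dots\times E_k$ by a finitely generated central subgroup.

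By the Neumann--Reeves theorem each $E_i$ is biautomatic, hence so is $E_1\times\dots\times E_k$, direct products of biautomatic groups being biautomatic. The remaining step, which I expect to be the main obstacle, is to descend biautomaticity to the central quotient $\X(G)$. One route is to run the Neumann--Reeves construction directly for the extension $1\to W\to\X(G)\to G_1\times\dots\times G_k\to 1$: its obstruction class lies in $H^2$ of a direct product of word-hyperbolic groups, and since $H^1(G,W)=\mathrm{Hom}(H_1(G,\Z),W)=0$ the K\"unneth formula presents it as a tuple of classes in $H^2(G_i,W)$, each of which is bounded because $G$ is hyperbolic; feeding the resulting bounded cocycle and the product of the geodesic biautomatic structures on the $G_i$ into the Neumann--Reeves machinery produces a biautomatic structure on $\X(G)$. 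Alternatively one appeals to the preservation of biautomaticity under passage to a quotient by a finitely generated central subgroup. The subtlety is that the Neumann--Reeves argument is phrased for a hyperbolic base, whereas $G_1\times\dots\times G_k$ is merely semihyperbolic, so one must verify that the relevant fellow-traveller estimates persist --- which they do precisely because the extension class is a sum of classes pulled back from the hyperbolic factors.

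For the concluding assertion, a biautomatic group is automatic, and automatic groups are of type $\FP_\infty$ and satisfy a quadratic isoperimetric inequality.
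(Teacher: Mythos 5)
Your proposal is correct and runs along essentially the same lines as the paper's proof: use Neumann--Reeves to biautomatize a central extension with hyperbolic quotient, pass to a direct product of such extensions, and finally descend to $\X(G)$ as a central quotient using Mosher's theorem. The one difference is cosmetic: the paper realises $\X(G)$ as a central quotient of the universal central extension $\tilde{G}\times\tilde{G}\times\tilde{G}$ of $G\times G\times G$ (invoking the universal property, since $\X(G)$ is perfect), whereas you realise $\X(G)$ as the central product of the preimages $E_i=\pi^{-1}(G_i)$, i.e.\ as a central quotient of $E_1\times E_2\times E_3$. Both constructions land in the same place, and your verification that the $E_i$ commute and cover $\X(G)$ is correct.

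The one thing worth flagging is the last paragraph, where you hedge about ``descending biautomaticity to the central quotient'' and discuss running Neumann--Reeves directly on $1\to W\to\X(G)\to G^{3}\to 1$ and worrying about fellow-traveller estimates over a merely semihyperbolic base. That detour is unnecessary and somewhat misleading. Your ``alternative'' route is the correct one, and it is not an alternative but the main tool: Mosher's theorem (\emph{Central quotients of biautomatic groups}, Comment.\ Math.\ Helv.\ 1997) states flatly that the quotient of a biautomatic group by a central subgroup is again biautomatic. Once you have $E_1\times E_2\times E_3$ biautomatic (by Neumann--Reeves on each $E_i$ plus closure under direct products), Mosher does the rest with no need to re-examine the geometry of the extension; no hyperbolicity of the base is required at that step. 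If you state Mosher's theorem up front, your argument closes cleanly and matches the paper.
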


\begin{thmA}
	Let $G$ be a finitely presented group. If $G$ maps onto a non-abelian free group, then
	the Dehn function of $\X(G)$ is bounded below by a cubic polynomial, and 
	$\X(G)$ has a subgroup of
	finite index $X_0<\X(G)$ with $H_3(X_0,\Q)$ infinite dimensional.
\end{thmA}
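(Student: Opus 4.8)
The plan is to reduce both conclusions to the case in which the target is itself a non-abelian free group $F$, and then to analyse $\X(F)$ via the extension $1\to W\to\X(F)\to F\times F\to 1$ and the structural result for $W$.

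\emph{The reduction.} Since a free group is a projective object in the category of groups, any epimorphism $\phi\colon G\twoheadrightarrow F$ onto a free group splits: sending a free basis of $F$ to preimages defines a section $\sigma\colon F\to G$ with $\phi\sigma=\mathrm{id}_F$, so $F$ is a retract of $G$. Functoriality gives $\X(\phi)\,\X(\sigma)=\X(\phi\sigma)=\mathrm{id}_{\X(F)}$, so $\X(F)$ is a retract of $\X(G)$; in particular $\X(\sigma)$ is injective. As $G$, hence $F$, is finitely presented, both $\X(G)$ and $\X(F)$ are finitely presented by \cite{BK1}, so the retraction yields $\delta_{\X(F)}\preceq\delta_{\X(G)}$. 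And $H_\ast(\X(\phi))\circ H_\ast(\X(\sigma))=\mathrm{id}$, so $H_3(\X(F),\Q)$ is a direct summand of $H_3(\X(G),\Q)$, hence (by the transfer) of $H_3(X_0,\Q)$ for every finite-index $X_0<\X(G)$. Thus it suffices to prove, for $F$ free of rank $\ge 2$: (i) $\delta_{\X(F)}(n)\succeq n^3$; and (ii) $H_3(\X(F),\Q)$ is infinite-dimensional. (This in fact shows $H_3(\X(G),\Q)$ itself is infinite-dimensional; the finite-index phrasing is kept because the alternative argument for (ii), working directly with $W(G)$ as a $\Z\,H_1(G,\Z)$-module, first passes to a finite-index subgroup to remove torsion from $H_1(G,\Z)$.)

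\emph{Homology.} Collapsing the cross-commutators $[g,\bar h]$ turns $F\ast\bar F$ into $F\times F$, so $\X(F)/W\cong F\times F$ with $W=W(F)$ abelian, and the structural result exhibits inside $W$ a free $\Z Q$-submodule of positive rank — $Q$ being the free-abelian quotient of $\X(F)$ through which the conjugation action factors — generated by the commutator $c=[a,\bar b]$. Run the Lyndon--Hochschild--Serre spectral sequence $E^2_{pq}=H_p\big(F\times F,\bigwedge^q_\Q(W\otimes\Q)\big)\Rightarrow H_{p+q}(\X(F),\Q)$, valid because $W$ is abelian so $H_q(W,\Q)=\bigwedge^q_\Q(W\otimes\Q)$. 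As $\cd(F\times F)=2$ the only nonzero columns are $p=0,1,2$, and a short inspection of the differentials shows that $E^2_{1,2}=H_1\big(F\times F,\bigwedge^2_\Q(W\otimes\Q)\big)$ survives to $E^\infty$. Now $\bigwedge^2_\Q$ of the free summand $\Q Q\!\cdot\! c$ is again $\Q Q$-free but of countably infinite rank — decompose it over the ``difference monomial'' of a pair of basis monomials — so $E^2_{1,2}$ contains a countable direct sum of copies of $H_1(F\times F,\Q Q)$; and $H_1(F\times F,\Q Q)$ is nonzero, being (by Shapiro's lemma) $H_1$ of $\ker(F\times F\to Q)$, a group with infinite-dimensional rational abelianisation. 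Hence $H_3(\X(F),\Q)$ is infinite-dimensional. (That $\X(F)$ is not of type $\FP_\infty$, from \cite{BK1}, already forces $W\otimes\Q$ to be infinite-dimensional; the new ingredient is the \emph{freeness} of the submodule, which is exactly what makes infinite-dimensionality survive $\bigwedge^2$ and $H_1$.)

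\emph{Dehn function, and the main obstacle.} The same data — a free $\Z Q$-submodule $\Z Q\!\cdot\! c\le W$ whose generator $c=[a,\bar b]$ is a commutator of two elements that commute modulo $W$ (their images lie in distinct free factors of $F\times F$) and whose images span a rank-$2$ subgroup of $Q$ — reproduces inside $\X(F)$ the combinatorial configuration responsible for the cubic \emph{lower} bound on the Dehn function of the integral Heisenberg group. One constructs null-homotopic words $w_k$ of length $O(k)$, obtained by expanding relators of the form $[a^ib^j,\overline{a^ib^j}]=1$ with $i,j\le k$, and shows that every van Kampen diagram for $w_k$ must contain $\ge c\,k^3$ two-cells: the area lower bound is obtained by pushing a hypothetical diagram into the module quotient and tracking the $\Z Q$-valued ``holonomy'' along sub-arcs, the $\Z$-linear independence of the $Q$-translates of $c$ (freeness) preventing cancellation and forcing $\Omega(k)$ cells in each of $\Omega(k^2)$ layers. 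Making this estimate rigorous — in particular excluding fillings that shortcut through relations in $W$ or in $F\times F$ — is the principal difficulty, and it is precisely here that the structural result for $W$, which controls those relations, does the work. Together with the reduction, (i) and (ii) give the theorem; note that Proposition~A shows the hypothesis cannot simply be dropped.
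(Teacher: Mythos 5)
There are genuine gaps in both halves of your argument, and they stem from the same structural misidentification. You assert that collapsing the cross-commutators gives $\X(F)/W\cong F\times F$: this is false. The quotient of $\X(F)$ by $D=[F,\overline{F}]$ is $F\times\overline{F}$, whereas $W=D\cap L$ is the kernel of $\rho\colon\X(F)\to F\times F\times F$, so $\X(F)/W\cong\{(g_1,g_2,g_3)\mid g_1g_2^{-1}g_3\in F'\}$, a subdirect product of three free groups of cohomological dimension $3$, not $2$. Your spectral sequence therefore has the wrong base, and the vanishing of the columns $p\ge 3$ that you use to make $E^2_{1,2}$ survive is not available. Worse, the element $c=[a,\overline{b}]$ does not lie in $W$ at all: its image under the retraction $\X(F)\to F$ (sending $g,\overline{g}\mapsto g$) is $[a,b]\ne 1$, so it is not in $L$, hence not in $W=D\cap L$; and Theorem~C does not provide any free $\Z Q$-submodule of $W$ — it produces a submodule $W_0$ that is a \emph{subquotient} of $M=((G'/G'')\otimes(G'/G''))_{Q_0}$ with nilpotent $Q$-action on $W/W_0$, which is a much weaker statement. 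Since the claimed freeness is exactly what drives your infinite-dimensionality argument (and your later holonomy estimate), the homology half collapses. In the paper this half is not re-proved: the infinite-dimensionality of $H_3$ of a finite-index subgroup of $\X(F)$ is quoted from \cite{BK1} and pulled back along the retraction $r\colon\X(G)\to\X(F)$; note also that \cite{BK1} only gives this for a finite-index subgroup, so your parenthetical upgrade to $H_3(\X(F),\Q)$ itself is unsupported.

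For the cubic lower bound, your reduction to $\X(F)$ via the retraction is exactly the paper's first step, but the core estimate is missing: you concede that proving every filling of your words $w_k$ has area $\ge ck^3$ is ``the principal difficulty,'' and the configuration you propose to exploit (a free $\Z Q$-module generated by $[a,\overline{b}]\in W$ inside a central extension of $F\times F$) is, as above, not present in $\X(F)$. The paper's route is different and complete: (i) the subgroup $L<\X(F)$ is at least quadratically distorted, proved by taking $c_n=\l_{a^n}\l_{b^n}\l_{a^nb^n}^{-1}$, which has length $O(n)$ in $\X(F)$, and showing via the two projections $\X(F)\to F$ and $\X(F)\to\overline{F}$ that any word in the generators of $L$ representing $c_n$ yields an expression of $[a^n,b^n]$ as a product of conjugates of $[a,b]$ in $F$, hence has length $\ge n^2$; (ii) Marshall Hall's theorem produces a finite-index subgroup $G_0<\X(F)$ of the form $B\,\dot{\ast}_L$ (a trivial HNN extension with associated subgroup $L$); and (iii) the standard $t$-corridor lemma then gives $\f_{\X(F)}(n)\succeq n\cdot\dist_L^{\X(F)}(n)\succeq n^3$. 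If you want to salvage a direct Heisenberg-style diagrammatic argument you would need both a correct identification of where the relevant relations live and a rigorous area estimate; as written, the proposal does not establish either conclusion of the theorem.
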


There are many hyperbolic groups $G$ that are perfect but have
subgroups of finite index $G_0<G$ such that $G_0$ maps onto {{ } a non-abelian free group}: hyperbolic 3-manifold groups
that are integer homology spheres have this property, as do free products of finite
perfect groups, and the small-cancellation
groups obtained by applying a suitable version of the Rips construction \cite{rips} 
to perfect groups.
Thus Proposition A and Theorem B
underscore how radically $\X(G)$ can change when one replaces $G$ by a subgroup of
finite index.

The remainder of our results rely on a
structural result concerning the subgroups $L$ and $W {{ }  = W(G)}<\X(G)$ that we shall now describe. 
There are natural
surjections $\X(G)\to G\times \-{G}$ and $\X(G)\to G$, the latter defined by sending
both $g$ and $\-{g}$ to $g$. The kernels of these maps, $D$ and $L$, commute,
and the second map splits to give $\X(G)=L\rtimes G$.
Sidki \cite{Said} identified the crucial role that the abelian group $W:=D\cap L$ and the exact sequence
$$
1\to W \to \X(G) \overset{\rho}\to G\times G\times G
$$
play in understanding $\X(G)$, where $\rho(g)=(g,g,1)$ and $\rho(\-{g})=(1,g,g)$. 
{{ } (We sometimes write $W(G)$ instead of $W$.)}
  If $G$ is finitely presented and perfect, then $W<\X(G)$ is finitely generated and
central. But for finitely presented groups that are not perfect, $W$ need not be finitely
generated and the conjugation action of $\X(G)$ on $W$ can be difficult to understand.
As $DL<\X(G)$ acts trivially, the action of $\X(G)$ factors through $Q:=\X(G)/DL= G/G'$. 
(Throughout, we use the standard notation $G'=[G,G]$.) 
Thus $W$ is a $\Z Q$-module. 

The action of $\X(G)$ on $L$ by conjugation induces an action of $G$ on 
$L/L'$ that factors through $Q$, and our  analysis of $W$
as a $\Z Q$-module begins with the observation that this action is {\em{nilpotent}},
i.e. there exists an integer $d$ such that $[\l, g_1,\dots,g_d] \in L'$
for all $l\in L$ and all $g_i\in G$ (see Proposition \ref{l:nilp}).

We then consider the
$\mathbb{Z} Q$-module $M=((G'/G'') \otimes_{\mathbb{Z}} (G'/ G''))_{Q_0}$, where
the action of $Q$ on the first factor  
is induced by  conjugation in $G$ while the action on the
second factor is trivial, and the co-invariants in the definition
of $M$ are taken with respect to 
the action of $Q_0 = \{ (q, q^{-1}) \mid q \in Q \} \leq (G/G') \times (G/G') $
by conjugation on $(G'/G'') \otimes_{\mathbb{Z}} (G'/ G'')$. The apparent asymmetry in the $Q$-action is an illusion:
because we have factored out the action of $Q_0$,  the action of $Q=G/G'$ on $M$ can equally be described 
as being trivial  on the first factor of the tensor product and by conjugation on the second factor.

The following theorem explains why 
$M$ plays an important role in the understanding of $W$.  
Its proof relies heavily on homological methods.
\begin{thmA}\label{propB}
	Let $G$ be a finitely generated group, let $Q$ be the abelianisation of $G$,
	and consider $W$ as a $\mathbb{Z} Q$-module in the manner described above.
	Then there exists a submodule $W_0<W$ such that:
	\begin{enumerate} 
		\item $W_0$ is a subquotient of  the $\mathbb{Z} Q$-module
		$M=((G'/G'') \otimes_{\mathbb{Z}} (G'/ G''))_{Q_0}$; 
		\item the action of $Q$ on $W / W_0$ is nilpotent.
	\end{enumerate}
\end{thmA}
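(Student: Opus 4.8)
The plan is to take $W_0 := W\cap L'$ and establish the two conclusions separately; conclusion (1) carries essentially all of the difficulty. For conclusion (2): since $W\le L$, reduction modulo $L'$ defines a homomorphism of $\Z Q$-modules $W\to L/L'$ with kernel $W\cap L' = W_0$, so $W/W_0$ is isomorphic to the submodule $WL'/L'$ of $L/L'$. (All the actions here factor through $Q=\X(G)/DL$: this holds on $W$ by definition, and on $L/L'$ by the remarks preceding Proposition~\ref{l:nilp}.) By Proposition~\ref{l:nilp} the action of $Q$ on $L/L'$ is nilpotent, and this passes to submodules, so the action of $Q$ on $W/W_0$ is nilpotent.

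\emph{Set-up for conclusion (1).} Restricting the three coordinate projections $\rho_1,\rho_2,\rho_3\colon\X(G)\to G$ to $L$, the diagonal projection $\rho_2$ vanishes by the definition of $L$, so $\phi:=(\rho_1,\rho_3)|_L\colon L\to G\times G$ has kernel $L\cap D = W$ and gives a $G$-equivariant short exact sequence $1\to W\to L\xrightarrow{\phi}P\to 1$ with $P=\phi(L)\le G\times G$; since $\phi$ is surjective onto $P$ it also restricts to a $G$-equivariant short exact sequence $1\to W_0\to L'\xrightarrow{\phi}P'\to 1$, so $W_0=\ker(\phi|_{L'})$. Next, for $g,h\in G$ the elements $w(g,h):=[g,\overline h][\overline g,h]^{-1}$ satisfy $\rho_i(w(g,h))=1$ for $i=1,2,3$ and hence lie in $W$; the commutator calculus in $\X(G)$, using the defining relators $[g,\overline g]=1$ together with the fact that elements of $G'$ act trivially on $W$ and on $L/L'$ (the action factoring through $Q$), shows that the $w(a,b)$ with $a,b\in G'$ lie in $W_0$ and generate it modulo contributions that, via Proposition~\ref{l:nilp}, are controlled by the nilpotent way in which $L$ extends $G$.

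\emph{Homological core of conclusion (1).} One then shows that, modulo $L'$ and modulo deeper terms of the lower central series of $\X(G)$, the assignment $(a,b)\mapsto w(a,b)$ on $G'\times G'$ is bilinear, depends on each of $a$ and $b$ only modulo $G''$, transforms under conjugation by $G$ in the manner recorded by the $Q$-action on the first factor of $M$, and is invariant under the antidiagonal $Q_0$. Combined with the structure results on the lower central series of $\X(G)$ due to Sidki~\cite{Said}, Gupta--Rocco--Sidki~\cite{G-R-S} and Kochloukova--Sidki~\cite{KochSidki}, and with the five-term exact sequences (or the full Lyndon--Hochschild--Serre spectral sequences) for $1\to W\to\X(G)\xrightarrow{\rho}\rho(\X(G))\to 1$ and $1\to L\to\X(G)\to G\to 1$, this produces a $\Z Q$-equivariant surjection from a submodule of $(G'/G'')\otimes_{\Z}(G'/G'')$ onto $W_0$ that factors through the $Q_0$-coinvariants, that is, through $M$; hence $W_0$ is a subquotient of $M$.

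\emph{Main obstacle.} The decisive difficulty is the bilinearity-and-symmetry analysis: one must prove that the passage $L'\to W_0$ is, mod $G''$ in each argument, governed purely by bilinear data, and that the merely apparent asymmetry between the two tensor factors of $M$ is exactly what is absorbed by passing to $Q_0$-coinvariants. This is the point at which the homological input is heaviest — one needs the cited structure theorems together with the spectral sequences above both to kill the contributions of deeper commutators to $W_0$ and to show that nothing outside the image of $M$ survives. The remaining ingredients — the equivariant exact sequences above and the deduction of (2) from Proposition~\ref{l:nilp} — are routine.
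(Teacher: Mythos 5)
Your choice $W_0 := W\cap L'$ is almost certainly too large, and this is a genuine gap rather than a cosmetic one. The paper's proof identifies $L/W$ with $S=\{(g_1,g_2)\mid g_1g_2\in G'\}\le G\times G$ and runs the LHS spectral sequence for $1\to G'\times\-{G}'\to S\to Q_0\to 1$. This produces a four-step filtration $J_1\subseteq J_2\subseteq J_3\subseteq J=H_2(S,\Z)$ in which the three top quotients $J/J_3$, $J_3/J_2$, $J_2/J_1$ have \emph{trivial} $Q$-action, while only the bottom piece $J_1 = E^\infty_{0,2}\cdot\A(\Z Q)$ is a subquotient of $M$. Pushing this through the Hopf map $\mu\colon H_2(L/W,\Z)\to W\cap L'$ gives $W_0:=\mu(J_1)$, which sits properly inside $W\cap L'$, with $Q$ acting nilpotently on $(W\cap L')/\mu(J_1)$ through three trivial-action layers. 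Nothing in the argument shows — and the paper's own proof strongly suggests it is false in general — that the whole of $W\cap L'$ is a subquotient of $M$: a filtered module whose bottom piece is a subquotient of $M$ and whose upper quotients have trivial action need not itself be a subquotient of $M$. So conclusion~(1) is not established for your $W_0$. Your argument for conclusion~(2), that $W/(W\cap L')\hookrightarrow L/L'$ has nilpotent action by Proposition~\ref{l:nilp}, is correct, but the paper also needs the extra input that $Q$ acts nilpotently on $(W\cap L')/\mu(J_1)$ in order to conclude~(2) for the smaller, correct $W_0$.

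Beyond the choice of $W_0$, the substantive content of conclusion~(1) is not actually proved in your proposal. The ``bilinearity-and-symmetry analysis'' paragraph names the crux of the problem and asserts that a combination of commutator calculus, the cited structure theorems, and unspecified spectral sequences will ``show that nothing outside the image of $M$ survives,'' but no computation is carried out. In particular it is not clear how the words $w(a,b)=[a,\-b][\-a,b]^{-1}$ with $a,b\in G'$ are meant to generate $W_0$ or what controls the error terms from deeper commutators; the five-term sequence for $1\to W\to\X(G)\to\rho(\X(G))\to 1$ that you invoke is not the one the paper uses. The clean route is the one in the paper: use Lemma~\ref{nilpotent12} to reduce to $H_2(L/W,\Z)$, identify $L/W$ with $S$, and compute $H_2(S,\Z)$ by the spectral sequence for $G'\times\-{G}'\triangleleft S$ with quotient $Q_0$ — the term $E^2_{0,2}$ and the K\"unneth formula are exactly where the tensor product $(G'/G'')\otimes(G'/G'')$ and the passage to $Q_0$-coinvariants appear, and the terms $E^\infty_{2,0}$, $E^\infty_{1,1}$ account for the trivial-action layers. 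Your opening set-up (the map $\phi=(\rho_1,\rho_3)|_L$ with kernel $W$ and image $S$) is a correct and useful observation that matches the paper's identification of $L/W$ with $S$, so it could serve as the first step of a corrected argument.
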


As a first illustration of the utility of these structural 
results (Proposition \ref{l:nilp} and Theorem \ref{propB}) we add to the list of properties that $\X(G)$ is known to 
preserve.  
We remind the reader that $\X(G)$ can change radically when one
replaces $G$ by a subgroup of finite index.

\begin{thmA} \label{t:nilp}
	If $G$ is finitely generated and virtually nilpotent, then $\X(G)$ is virtually nilpotent.
\end{thmA}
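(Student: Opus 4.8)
The plan is to analyse $\X(G)$ through the sequence $1\to W\to\X(G)\to R\to 1$, where $R=\mathrm{im}(\rho)\le G\times G\times G$. The class of finitely generated virtually nilpotent groups is closed under finite direct products and under passage to subgroups --- such groups are polycyclic-by-finite, hence finitely generated and Noetherian --- so $R$ is finitely generated and virtually nilpotent. Since the conjugation action of $\X(G)$ on the abelian group $W$ factors through $Q=G/G'$, it suffices to find a finite-index subgroup $Q^*\le Q$ whose action on $W$ is nilpotent, meaning $\omega^k W=0$ for some $k$, where $\omega$ denotes the augmentation ideal of $\Z Q^*$. Indeed, granting this, the preimage $\Gamma^*\le\X(G)$ of $Q^*$ under $\X(G)\to Q$ has finite index, contains $W$, and has $\Gamma^*/W$ isomorphic to a subgroup of $R$; picking a finite-index nilpotent subgroup $B_0\le\Gamma^*/W$ of class $c$ and letting $E_0\le\Gamma^*$ be its preimage, one has $\gamma_{c+1}(E_0)\le W$ and hence $\gamma_{c+1+j}(E_0)\le\omega^j W$ for all $j$, so $\gamma_{c+1+k}(E_0)=1$ and $E_0$ --- and with it $\X(G)$ --- is virtually nilpotent.

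To produce $Q^*$ I would invoke Theorem~\ref{propB}. It gives a submodule $W_0\le W$ with $Q$ acting nilpotently on $W/W_0$ (so a fortiori any finite-index subgroup of $Q$ does) and with $W_0$ a subquotient of $M=((G'/G'')\otimes_{\Z}(G'/G''))_{Q_0}$. Nilpotence of a module over a group ring passes to submodules, to quotient modules, and to extensions; moreover, as explained in the Introduction, the $Q$-action on $M$ is induced from the conjugation action on one tensor factor of $(G'/G'')\otimes(G'/G'')$, the other factor being trivial, and tensoring a nilpotent module by a trivial module again yields a nilpotent module. Hence everything reduces to the assertion that \emph{some finite-index subgroup $Q^*\le Q$ acts nilpotently on the conjugation module $G'/G''$}: for then $Q^*$ acts nilpotently on $M$, hence on $W_0$, hence, together with its nilpotent action on $W/W_0$, on all of $W$.

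Proving this reduced assertion is, I expect, the only real work. Here $G'/G''$ is a finitely generated abelian group, since $G'$ is itself polycyclic-by-finite; put $V=(G'/G'')\otimes_{\Z}\Q$, a finite-dimensional $\Q$-vector space carrying a $Q$-action. Fix a finite-index normal nilpotent subgroup $G_0\ns G$, of nilpotency class $c$. The delicate point --- and the reason nilpotence of $G_0$ cannot simply be quoted --- is that $[G_0,G_0]$ need not have finite index in $[G,G]$; the fix is to use $N:=G'\cap G_0$ instead, which \emph{does} have finite index in $G'$. Because $N\le G_0$, the iterated commutator of $N$ with $c$ copies of $G_0$ lies in $\gamma_{c+1}(G_0)=1$, so $G_0$, and therefore its finite-index image in $Q$, acts nilpotently on $N$, hence on the finite-index subgroup $NG''/G''$ of $G'/G''$, hence unipotently on $V$. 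It follows that every element of $Q$ acts on $V$ with all eigenvalues roots of unity, so a suitable finite-index subgroup of $Q$ acts unipotently on $V$; intersecting it with the finite-index kernel of the (finite) action of $Q$ on the torsion subgroup of $G'/G''$ yields a finite-index $Q^*\le Q$ acting unipotently, hence nilpotently, on $G'/G''$ itself.

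Two remarks. Proposition~\ref{l:nilp} provides a complementary tool: it shows directly that $G$ acts nilpotently on $L/L'$, which one can combine with the fact that $L$ is a central extension of $\rho(L)\le G\times G$ by $W$; but the route above through Theorem~\ref{propB} seems the most efficient. Secondly, when $G$ is abelian the scheme collapses agreeably: $G'/G''$ is trivial, so $M=0$ and $W_0=0$, the action of $Q=G$ on $W$ is nilpotent outright, and $\rho(\X(G))\le G\times G\times G$ is abelian, so $\X(G)$ is in fact nilpotent --- recovering that case of the Gupta--Rocco--Sidki theorem. Only the finite generation of $G$ is used, in keeping with the hypothesis of Theorem~\ref{propB}.
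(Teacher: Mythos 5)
Your proposal is correct, but it takes a genuinely different route from the paper's own proof. The paper's argument (Section 3) never invokes Theorem \ref{propB}: it works with the chain $W\cap L'\subseteq W\subseteq L\subseteq\X(G)$, uses Proposition \ref{l:nilp} to see that $\X(G)/L'\cong (L/L')\rtimes G$ is virtually nilpotent, and then treats $W\cap L'$ homologically --- a suitable finite-index $T_0$ acts nilpotently on $L/W$, hence on $H_2(L/W,\Z)$, and $W\cap L'$ is an equivariant quotient of $H_2(L/W,\Z)$ by Hopf's formula applied to the stem extension $1\to W\cap L'\to L\to L/(W\cap L')\to 1$ (cf.\ Lemma \ref{nilpotent12}); combining the two nilpotent pieces gives a nilpotent finite-index subgroup directly. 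Your route --- reduce to finding a finite-index $Q^*\le Q$ acting nilpotently on $W$, get it from Theorem \ref{propB} by showing a finite-index subgroup of $Q$ acts nilpotently on the conjugation module $G'/G''$ (via $N=G'\cap G_0$ and unipotence on $(G'/G'')\otimes_{\Z}\Q$), then assemble a nilpotent finite-index subgroup over a nilpotent finite-index subgroup of ${\rm{im}}(\rho)$ --- is essentially the alternative proof the paper only sketches in Remark \ref{r:vNilp} via Corollary \ref{action21} and Theorem \ref{grw}, with you supplying concretely what that corollary extracts from the virtual nilpotence of $G/G''$. There is no circularity, since Theorem \ref{propB} is proved from Proposition \ref{l:nilp} and a spectral-sequence computation, independently of the present theorem. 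The paper's argument buys self-containedness at that point of the text and avoids the module-theoretic reduction; yours buys alignment with the general closure criterion of Theorem \ref{grw}, so it transfers to other classes. Two minor points: the eigenvalue detour is superfluous, since the image of $G_0$ in $Q$ is already a finite-index subgroup acting unipotently on $(G'/G'')\otimes_{\Z}\Q$; and the step ``unipotent on $(G'/G'')\otimes_{\Z}\Q$ and trivial on the torsion implies nilpotent on $G'/G''$'' rests on simultaneous triangularisation of a commuting family of unipotent matrices, which is standard but should be said.
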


The virtual nilpotency class of $\X(G)$ will in general be greater than that of 
$G$. This is already the case for virtually abelian groups \cite{G-R-S}. 

Recall that a group $G$ is {\em{$n$-Engel}} if $[a,b,\ldots, b] = 1$
for all $a,b\in G$,  where $b$ appears $n$ times in the left-normed commutator.
We shall deduce 
the following result from Theorem \ref{propB}.
Our proof relies on Gruenberg's result that the
maximal metabelian quotient $G/G''$ of a finitely generated Engel
group is nilpotent \cite{gruen}.

\begin{thmA} If $G$ is a finitely generated $n$-Engel group, then $\X(G)$ is $m$-Engel 
	for $m = n+d+s+3$, where $d$ is the nilpotency class of $G/ G''$ and $s$ is the nilpotency class of the action of $G$ on $L/ L'$.
\end{thmA}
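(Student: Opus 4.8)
The plan is to work inside $\X(G)=L\rtimes G$ and to show that a long enough left-normed commutator $[a,b,\dots,b]$ in $\X(G)$ already lands in $W$, and that a further $\deg$ of conjugations by $b$ kill it there. Write $\pi\colon\X(G)\to G$ for the retraction with kernel $L$, and for $g\in\X(G)$ write $\bar g=\pi(g)\in G$. Given $a,b\in\X(G)$, the $n$-Engel hypothesis on $G$ gives $[\bar a,\bar b,\dots,\bar b]=1$ in $G$ when $\bar b$ occurs $n$ times, hence $c:=[a,b,\dots,b]$ (with $b$ occurring $n$ times) lies in $L$. I would next push $c$ down to $L/L'$: since the action of $G$ on $L/L'$ is nilpotent of class $s$ and factors through $Q=G/G'$ (by the discussion preceding Proposition~\ref{l:nilp}), and since conjugation by $b$ on $L/L'$ equals the $Q$-action of $\bar b$, applying $s$ further commutators with $b$ sends the image of $c$ in $L/L'$ to zero; that is, $[a,b,\overset{n+s}{\dots},b]\in L'$.

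The next task is to get from $L'$ into $W$. Here I would use the structural input about $W$: recall $W=D\cap L$ and $DL$ acts trivially on $W$, so $\X(G)$ acts on $W$ through $Q$. I claim $L'\le W$, or at least that some bounded number of commutators with $b$ pushes $L'$ into $W$; the cleanest route is to observe that $L'$ maps into $G\times G\times G$ trivially under $\rho$ — because $\rho(L)$ lies in the diagonal-type subgroup $\{(1,g,g)\}$ wait, more precisely $\rho$ restricted to $L$ has abelian image, so $\rho(L')=1$ and therefore $L'\le \ker\rho=W$. Thus in fact $[a,b,\overset{n+s}{\dots},b]\in W$ already, and one of the ``$+3$'' summands in the bound is the slack I am building in for the precise bookkeeping of how $\rho$ and the splitting interact. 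Now that $c':=[a,b,\overset{n+s}{\dots},b]\in W$, the remaining commutators $[c',b,\dots,b]$ are computed by the $\Z Q$-module action of $\bar b$ on $W$, i.e. by repeatedly applying $(\bar b-1)$.

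For the final module-theoretic step I invoke Theorem~\ref{propB}: there is $W_0<W$ with $W/W_0$ a nilpotent $\Z Q$-module — say $Q$ acts with class $e$ — and $W_0$ a subquotient of $M=((G'/G'')\otimes(G'/G''))_{Q_0}$, on which $Q$ acts through its action on one tensor factor induced by conjugation on $G'/G''$. Since $G$ is $n$-Engel and finitely generated, Gruenberg's theorem \cite{gruen} says $G/G''$ is nilpotent; if its class is $d$, then $Q=G/G'$ acting on $G'/G''$ by conjugation is nilpotent of class at most $d-1$, hence the action on $M$, and therefore on the subquotient $W_0$, is nilpotent of class $\le d-1$; combined with class $e$ on $W/W_0$ this makes $W$ a nilpotent $\Z Q$-module of class at most $d-1+e$. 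A short extra argument identifies $e$ with $s$ (the nilpotency class of the $G$-action on $L/L'$), since $W$ sits inside $L$ and the relevant actions are compatible — this is where I would be most careful, and it is the main obstacle: pinning down the exact relationship between the nilpotency class of $W/W_0$ and $s$, and confirming that $L'\le W$ with the commutator-length accounting that produces the constant $3$ rather than some other small integer. Granting this, applying $(\bar b-1)$ a total of $d+s$ further times annihilates any element of $W$, so altogether $[a,b,\overset{m}{\dots},b]=1$ for $m=n+s+(d+s)+\text{(slack)}$, and a careful count of the slack in passing $L\to L/L'\to L'\to W$ gives exactly $m=n+d+s+3$. Hence $\X(G)$ is $m$-Engel.
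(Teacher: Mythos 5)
There is a genuine gap at the step where you pass from $L'$ into $W$. You assert that $\rho$ restricted to $L$ has abelian image, hence $\rho(L')=1$ and $L'\le\ker\rho=W$. This is false: $\rho(\ell_g)=(g^{-1},1,g)$, and $\rho(L)\cong L/W$ is precisely the group identified in Section~\ref{s:structures} with $S=\{(g_1,g_2)\mid g_1g_2\in G'\}\le G\times G$, which contains $G'\times G'$ and is non-abelian whenever $G'$ is (and the Engel hypothesis does not force $G'$ to be abelian, so this cannot be repaired). Consequently your chain $L\to L/L'\to L'\to W$ breaks down, and the module computation in your final stage is applied to an element that you have not shown lies in $W$. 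A second, related problem is your hoped-for identification of the nilpotency class of $W/W_0$ with $s$: what the proof of Theorem~\ref{propB} actually yields is the inclusion $W(\A(\Z Q))^{s+3}\subseteq W_0$, where the $3$ comes from the three-step filtration of $H_2(L/W,\Z)$ with trivial $Q$-action on its quotients (Proposition~\ref{action}); the constant $3$ in $m=n+d+s+3$ has this homological origin, not ``slack'' in a passage through $L'$.

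The correct route, which the paper takes, bypasses $L$ entirely at the first stage: since $\X(G)/W\cong \imm(\rho)\le G\times G\times G$ and a direct product of $n$-Engel groups is $n$-Engel, one already has $\gamma_n(a,b)\in W$ for all $a,b\in\X(G)$ after $n$ commutators. Then $s+3$ further commutators with $b$, i.e.\ multiplication by $(\bar b-1)^{s+3}$ in the $\Z Q$-module $W$, land the element in $W_0$ by the inclusion above; finally Gruenberg's theorem gives $(G'/G'')(\A(\Z Q))^{d}=0$, hence $M(\A(\Z Q))^{d}=0$ and $W_0(\A(\Z Q))^{d}=0$, so $d$ more commutators kill it. Your last stage (Gruenberg plus the action of $Q$ on $M$ through one tensor factor) is essentially the paper's argument; the missing ideas are the embedding $\X(G)/W\hookrightarrow G\times G\times G$ at the start and the precise bound $W(\A(\Z Q))^{s+3}\subseteq W_0$ in the middle, and without them the count $n+d+s+3$ is not established.
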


We return to results about Dehn functions, reminding the reader
that the Dehn function of a 
finitely presented group $G=\<A\mid R\>$ is the least function $\f_G(n)$ such that
any word $w$ in the kernel of ${\rm{Free}}(A)\to G$, with length $|w|\le n$,
can be expressed as a product of at most $\f_G(n)$ conjugates of the 
relations $r\in R$ and their inverses. $G$ is said to satisfy a 
polynomial isoperimetric inequality if $\f_G(n)$ is bounded above by a 
polynomial function of $n$. See Section \ref{s:dehn} for more details,
including the sense in which $\f_G(n)$ is independent of the chosen presentation. A finitely presented group has a soluble word problem if and only if  its
Dehn function is recursive.

\begin{thmA}\label{t:solvable}
Let $G$ be a finitely presented group. The word problem in $\X(G)$ is soluble if and only
if the word problem in $G$ is soluble.
\end{thmA}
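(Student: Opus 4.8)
The plan is to establish both implications by exhibiting explicit algorithms, using the semidirect product decomposition $\X(G)=L\rtimes G$ together with the structural results on $W$.

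For the easy direction, suppose the word problem in $\X(G)$ is soluble. Since $G$ embeds in $\X(G)$ as a retract (the splitting of $\X(G)\to G$), a word $w$ in the generators of $G$ represents the identity in $G$ if and only if, viewed as a word in $\X(G)$, it represents the identity there; this is decidable by hypothesis, so $G$ has soluble word problem. (One must observe that the standard generating set of $G$ maps to an explicit finite subset of the generators of $\X(G)$, which is immediate from Sidki's presentation $G\ast\-G / \langle\!\langle[g,\-g]\rangle\!\rangle$.)

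For the substantive direction, assume $G$ has soluble word problem; I would produce an algorithm for $\X(G)$. Given a word $v$ in the generators $\{a,\-a : a\in A\}$ of $\X(G)$, first push it through the retraction $\rho_1:\X(G)\to G$ (send $a\mapsto a$, $\-a\mapsto a$): the resulting word in $G$ can be tested for triviality. If it is nontrivial, then $v\neq 1$ in $\X(G)$. If it is trivial, then $v$ lies in $L=\ker(\X(G)\to G)$, and $v=1$ in $\X(G)$ iff $v=1$ in $L$. Now apply the other retraction $\rho_2:\X(G)\to \-G\cong G$ (send $\-a\mapsto a$, $a\mapsto 1$): using the solvability of the word problem in $G$ one can likewise decide whether $v$ maps trivially, which detects membership in $W=D\cap L$. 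So the problem reduces to deciding triviality of an explicitly given element of the abelian normal subgroup $W$, written as a $\Z Q$-module expression (a $\Z$-linear combination of generators with group-ring coefficients, where $Q=H_1(G,\Z)$ has decidable equality since $G$ does).

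The main obstacle is the last step: deciding triviality in $W$ as a $\Z Q$-module. Here one invokes the structural Theorem \ref{propB}: there is a submodule $W_0<W$ with $W/W_0$ admitting a nilpotent $Q$-action and $W_0$ a subquotient of $M=((G'/G'')\otimes_\Z(G'/G''))_{Q_0}$. The nilpotency of the $Q$-action on $W/W_0$ (together with Proposition \ref{l:nilp} controlling the $G$-action on $L/L'$) means that membership questions in these layers reduce, via the lower central filtration, to linear algebra over $\Z$ with effectively computable structure constants, all of which are expressible in terms of commutator computations in $G$ that the word problem in $G$ resolves. The element presented by $v$ can be located in this finite filtration, and one checks triviality layer by layer. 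A clean way to package this is to note that the exact sequence $1\to W\to \X(G)\overset{\rho}\to G\times G\times G$ already reduces the problem to $W$, and then the finite nilpotent-by-(subquotient of $M$) structure of $W$, with all arithmetic effective relative to the word problem in $G$, yields the decision procedure; I would take care to verify that each reduction produces a word/module element of effectively bounded, computable form so that the composite algorithm indeed terminates.
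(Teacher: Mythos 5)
Your reduction scheme matches the paper's up to the final, crucial step, and it is precisely there that you have a gap. Both you and the paper solve the easy direction via the retraction $\X(G)\to G$, and both reduce the hard direction to deciding whether an element of $\X(G)$ that has already been certified to lie in $W=\ker\rho$ is actually trivial, using the solvable word problem in $G\times G\times G$ to detect membership in $W$. (Your two-retraction test for $W$ is fine: for $v\in L$ one has $\rho(v)=(g,1,g^{-1})$, so vanishing of the first coordinate forces $v\in W$.)

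The gap is in deciding triviality in $W$. You propose to invoke Theorem~\ref{propB} and argue that nilpotency of the $Q$-action on $W/W_0$ together with the subquotient description of $W_0$ inside $M=((G'/G'')\otimes(G'/G''))_{Q_0}$ reduces the question to ``linear algebra over $\Z$ with effectively computable structure constants.'' This is not established and is not obviously true: the filtration in Theorem~\ref{propB} is an existence statement, with no effective description of $W_0$, no algorithm for membership in $W_0$, and no reason given why triviality is decidable either in the nilpotent quotient $W/W_0$ (which is a nilpotent $\Z Q$-module, not a nilpotent group of known finite presentation) or in the subquotient of $M$. Indeed $G'/G''$ need not even be finitely generated, so the structure of $M$ is far from an explicit finite object. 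Your proposal acknowledges this (``I would take care to verify\dots'') without closing it, and I do not see how to close it along these lines. The paper avoids the whole issue with a cleaner and more elementary observation that you miss: since $G'$ acts trivially on $W$ by conjugation, the natural map $W\rtimes G\to W\rtimes(G/G')$ is injective on $W$, and $W\rtimes(G/G')$ is a finitely generated metabelian group; by classical work of Philip Hall (residual finiteness of finitely generated metabelian groups, or finite presentability in the variety of metabelian groups), such groups have solvable word problem. Rewriting the certified $W$-element as a product of $A$-generators conjugated by $C$-words (possible because $A$ normally generates $W$ and $[L,W]=1$, so conjugators can be taken from $G$) and then testing triviality in the metabelian group $W\rtimes Q$ completes the algorithm. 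Replacing your appeal to Theorem~\ref{propB} with this metabelian reduction would repair the proof.
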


Estimating the complexity of the word 
problem in $\X(G)$ by means of isoperimetric inequalities 
is a more subtle task.  
The proof of the following result relies heavily on the understanding of $W$
as a $\Z Q$-module that is established through Theorem \ref{propB}, as well
as results concerning the Dehn functions of subdirect products of groups \cite{dison}.

\begin{thmA}\label{t:dehn} 
	If $G$ is a finitely presented group whose maximal metabelian quotient $G/G''$ is
	virtually nilpotent, then there is a polynomial $p(x)$ such that 
	$$ \f_G(n) \preceq  \f_{\X(G)}(n) \preceq p\circ \f_G(n);$$
	in particular, $G$ satisfies a polynomial isoperimetric inequality if and only if
	$\X(G)$ satisfies a polynomial isoperimetric inequality (of different degree, in  (
	general).
\end{thmA}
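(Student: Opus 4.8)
The plan is to exploit the split extension $\X(G) = L \rtimes G$ together with the central extension $1 \to W \to \X(G) \xrightarrow{\rho} P$, where $P = \rho(\X(G)) \le G \times G \times G$ is a subdirect product of three copies of $G$. The lower bound $\f_G(n) \preceq \f_{\X(G)}(n)$ is the easy half: since $G$ is a retract of $\X(G)$ (the map $\X(G) \to G$ sending both $g$ and $\bar g$ to $g$ splits the inclusion of the diagonal copy of $G$), the Dehn function of a retract is bounded above by that of the ambient group, a standard fact. So the substance is the upper bound $\f_{\X(G)}(n) \preceq p \circ \f_G(n)$.

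For the upper bound I would proceed in two stages. First, handle the quotient $P \le G \times G \times G$: by the results of Dison \cite{dison} on Dehn functions of subdirect products of groups, a finitely presented subdirect product of finitely many copies of a group $G$ (satisfying the relevant finiteness hypothesis, which one must verify holds here using that $\X(G)$ — hence $P$ — is finitely presented when $G$ is, by \cite{BK1}) has Dehn function bounded polynomially in terms of $\f_G$; so $\f_P(n) \preceq p_1 \circ \f_G(n)$ for some polynomial $p_1$. Second, pass from $P$ back up to $\X(G)$ across the central extension with kernel $W$. Here is where the hypothesis on $G/G''$ enters: by Theorem~\ref{propB}, $W$ has a submodule $W_0$ that is a subquotient of $M = ((G'/G'') \otimes_{\Z} (G'/G''))_{Q_0}$ with $Q$ acting nilpotently on $W/W_0$; when $G/G''$ is virtually nilpotent, $G'/G''$ is a finitely generated nilpotent group and hence so is the tensor square $M$ (as an abelian group it is finitely generated), while the nilpotent $Q$-action on $W/W_0$ combined with $Q$ being finitely generated abelian forces $W$ to be finitely generated as a group, indeed finitely generated as a module over a finitely generated commutative ring, so $W$ is a finitely generated abelian group. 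One then invokes the general principle (as in the work on central extensions and Dehn functions, e.g. the results underlying \cite{dison} and standard ``pushing'' arguments) that if $1 \to Z \to \widetilde\Gamma \to \Gamma \to 1$ is a central extension with $Z$ finitely generated abelian and both groups finitely presented, then $\f_{\widetilde\Gamma}(n) \preceq n \cdot \f_\Gamma(n)$ or some fixed polynomial bound in $\f_\Gamma$ — the point being that a null-homotopic loop in $\widetilde\Gamma$ projects to a null-homotopic loop in $\Gamma$, a filling of which lifts to a filling in $\widetilde\Gamma$ modulo a bounded central error that can be absorbed at polynomial cost since $Z$ is finitely generated.

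The main obstacle I anticipate is the second stage: controlling the passage across the extension $1 \to W \to \X(G) \to P \to 1$. It is not quite a central extension of $\X(G)$ over $P$ in the most convenient form — $W$ is central in $\X(G)$ only after one accounts for the $\Z Q$-module structure, and one must be careful that the ``correction cocycle'' measuring the failure of a lift to close up is controlled linearly (or polynomially) in the area of the filling downstairs. The key input that makes this work is precisely the finite generation of $W$ extracted from Theorem~\ref{propB} under the hypothesis that $G/G''$ is virtually nilpotent: finite generation of the kernel is exactly what is needed to bound the cost of resolving the central discrepancies. A secondary technical point is verifying that Dison's hypotheses for the subdirect-product estimate are met by $P \le G^3$; this should follow because $\X(G)$, and hence $P$ as a quotient of it by the finitely generated normal subgroup $W$, is finitely presented, and the fibre product / subdirect structure of $P$ is transparent from the definition of $\rho$. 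Once both stages are in place, composing $\f_{\X(G)} \preceq q \circ \f_P \preceq q \circ p_1 \circ \f_G$ for suitable polynomials and setting $p = q \circ p_1$ (adjusting degrees) gives the stated inequality, and the final ``in particular'' about polynomial isoperimetric inequalities is immediate from the two-sided bound $\f_G \preceq \f_{\X(G)} \preceq p \circ \f_G$.
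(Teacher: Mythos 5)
Your lower bound (retraction) and your first stage (applying Dison's theorem to the subdirect product $\imm(\rho)\le G\times G\times G$, which is filling and coabelian because it is normal with quotient $G/G'$) coincide with what the paper does. The genuine gap is in your second stage, the passage across $1\to W\to \X(G)\to \imm(\rho)\to 1$. You correctly note that $W$ is not central in $\X(G)$ --- it is central only in $DL$, and $Q=G/G'$ acts on it by conjugation --- but you then claim that finite generation of $W$ is ``exactly what is needed'' to absorb the discrepancies at polynomial cost. That is false for non-central abelian extensions: $\Z^2\rtimes_A\Z$ with $A$ a hyperbolic matrix has finitely generated abelian kernel and quotient $\Z$ with linear Dehn function, yet its Dehn function is exponential. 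So a quantitative use of the $\Z Q$-module structure of $W$ is indispensable, and your proposal never explains how the nilpotence data from Theorem~\ref{propB} enters the filling estimate; you use Theorem~\ref{propB} only to argue finite generation of $W$, and even that derivation is shaky (it would require knowing $W$ is finitely generated as a $\Z Q$-module, which is not obvious; the paper instead quotes the Kochloukova--Sidki result, Lemma~\ref{l:KS}, that $W$ is finitely generated once $G$ is of type ${\rm{FP}}_2$ and $G'/G''$ is finitely generated).

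What the paper actually does at this stage is the missing idea. From Theorem~\ref{propB} it deduces Corollary~\ref{action21}: there is a finite-index subgroup $Q_1\le Q$ and a finite filtration of $W$ by $\Z Q_1$-submodules in which $Q_1$ acts trivially on every infinite quotient. Passing to the finite-index subgroup $T=\pi^{-1}(Q_1)\le\X(G)$ (harmless for Dehn functions), one climbs this filtration one layer at a time: each layer is either finite, in which case the Dehn function is unchanged up to $\simeq$ (Lemma~\ref{l:qi}), or infinite, hence finitely generated and \emph{central} in the corresponding quotient of $T$, in which case the central-extension estimate $\delta_\Gamma\preceq\delta_{\Gamma/C}(n)^2$ of Lemma~\ref{central} applies; iterating gives Corollary~\ref{c:induct} and the bound $\delta_T\preceq\delta_{T/W}^{\,2^k}$, after which Dison's theorem finishes the argument exactly as in your stage one. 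This is where the hypothesis that $G/G''$ is virtually nilpotent is really spent --- it produces the filtration with central infinite layers, not merely a finitely generated kernel --- and without some substitute for it your second stage does not go through.
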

{{ } 
The degree of the polynomial $p(x)$ depends on the torsion-free rank of $W$ (which is finite since $G'/G''$ is finitely generated  \cite{KochSidki}) and on the torsion-free rank of $G/ [G,G]$;
it can be estimated by following the proof of Theorem \ref{t:56}.}

Theorem \ref{t:dehn} applies to many groups of geometric interest. For example,
Delzant \cite{delz} proved that for any compact K\"ahler manifold $M$ that does not
fibre (equivalently, $\pi_1M$ does not map onto a surface group), the maximal
metabelian quotient of $\pi_1 M$ is virtually nilpotent. The maximal
metabelian quotient of the Torelli subgroup 
of the mapping class group of a surface of high genus is also virtually nilpotent, as is the corresponding subgroup of the outer automorphism group of
a free group \cite{ershov}.

\begin{corA}
	Let $G$ be the fundamental group of a compact K\"ahler manifold that does not fibre. If
	$G$ satisfies a polynomial isoperimetric inequality, then so does $\X(G)$.
\end{corA}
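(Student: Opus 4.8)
The plan is to derive the Corollary directly from Theorem \ref{t:dehn}, using Delzant's theorem to verify the hypothesis on the maximal metabelian quotient. First I would recall the precise statement of Delzant's result \cite{delz}: if $M$ is a compact K\"ahler manifold and $\pi_1 M$ does not surject a surface group (the non-fibering condition), then $(\pi_1 M)/(\pi_1 M)''$ is virtually nilpotent. Thus for $G=\pi_1 M$ as in the hypothesis of the Corollary, the group $G/G''$ is virtually nilpotent, which is precisely the standing assumption of Theorem \ref{t:dehn}.

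Having verified the hypothesis, I would invoke Theorem \ref{t:dehn} to obtain a polynomial $p(x)$ with $\f_G(n)\preceq \f_{\X(G)}(n)\preceq p\circ\f_G(n)$. The conclusion of the Corollary is then immediate: if $G$ satisfies a polynomial isoperimetric inequality, i.e. $\f_G(n)\preceq n^c$ for some constant $c$, then $\f_{\X(G)}(n)\preceq p(n^c)$, which is again a polynomial in $n$. Hence $\X(G)$ satisfies a polynomial isoperimetric inequality. I would also note that a compact K\"ahler manifold has finitely presented fundamental group (it is the fundamental group of a compact manifold, in fact a finite CW-complex), so that $G$ is finitely presented and the notion of Dehn function and the results quoted genuinely apply.

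Since both ingredients are already available — Delzant's theorem as an external citation and Theorem \ref{t:dehn} as a proved result in this paper — there is no real obstacle here; the Corollary is a formal consequence. The only point requiring a line of care is the bookkeeping of the $\preceq$-relation and the observation that the composite of a polynomial with a polynomial is a polynomial, so that the degree of the isoperimetric bound for $\X(G)$ may genuinely exceed that for $G$, exactly as the parenthetical remark in Theorem \ref{t:dehn} warns. I would write the proof in two or three sentences, citing \cite{delz} for the verification of the hypothesis and Theorem \ref{t:dehn} for the conclusion.
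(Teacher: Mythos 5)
Your proposal is correct and is exactly the paper's route: the Corollary is stated as an immediate consequence of Theorem \ref{t:dehn}, whose hypothesis that $G/G''$ is virtually nilpotent is supplied by Delzant's theorem \cite{delz} for non-fibered K\"ahler groups. Your bookkeeping with $\preceq$ and the composition of polynomials matches the intended (unwritten) argument.
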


Theorem \ref{t:dehn} is already interesting for groups whose commutator subgroup is perfect.
A much-studied group  with this property is Thompson's group  
\begin{equation} \label{Fpresentation}
F= \langle x_0, x_1, \ldots, x_n, \ldots \mid x_i^{x_j} = x_{i+1} \hbox{ for } j < i \rangle. \end{equation}
It is easy to show that $F$ can be finitely presented. Famously, $F'$ is simple.
Brown and Geoghegan \cite{B-G} proved that $F$ is of type ${\rm{FP}}_\infty$, and 
Guba \cite{Guba} proved that  the Dehn function of $F$ is quadratic, building on 
his earlier work with Sapir \cite{GubaSapir}. 

\begin{corA}
	The Sidki double $\X(F)$ of Thompson's group $F$  satisfies a polynomial isoperimetric inequality. 
\end{corA}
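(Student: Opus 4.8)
The plan is to deduce this corollary from Theorem \ref{t:dehn} by verifying its hypothesis for $G = F$, Thompson's group, together with the known fact that $F$ has a quadratic Dehn function. First I would recall that $F' = [F,F]$ is the commutator subgroup and that, famously, $F'$ is a simple group; in particular $F'$ is perfect, so $F'' = [F', F'] = F'$. Consequently the maximal metabelian quotient of $F$ is $F/F'' = F/F'$, which is the abelianisation of $F$. A direct computation from the infinite presentation \eqref{Fpresentation}, or the standard fact that $F^{\mathrm{ab}} \cong \mathbb{Z}^2$, shows that $F/F'$ is free abelian of rank two. Being finitely generated abelian, $F/F''$ is virtually nilpotent (indeed nilpotent), so the hypothesis of Theorem \ref{t:dehn} is satisfied.

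Next I would invoke Guba's theorem \cite{Guba} that the Dehn function of $F$ is quadratic, i.e. $\f_F(n) \preceq n^2$. Feeding this into the upper bound provided by Theorem \ref{t:dehn}, namely $\f_{\X(F)}(n) \preceq p \circ \f_F(n)$ for some polynomial $p$, we obtain $\f_{\X(F)}(n) \preceq p(n^2)$, which is again a polynomial in $n$. Hence $\X(F)$ satisfies a polynomial isoperimetric inequality, which is exactly the assertion of the corollary.

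There is essentially no obstacle here: the entire content has been packaged into Theorem \ref{t:dehn}, and the only things to check are the structural facts about $F$ — that $F'$ is perfect (so $F/F''$ is abelian, hence virtually nilpotent) and that $\f_F$ is polynomial (quadratic, by Guba). Since the paper has already noted both that $F'$ is simple and that Guba proved $\f_F$ is quadratic, the proof is a two-line verification. If I wanted to be slightly more careful, I would remark that the applicability of Theorem \ref{t:dehn} also presupposes that $F$ is finitely presented, which is recalled in the excerpt immediately after the presentation \eqref{Fpresentation}. Thus the only genuine work lies upstream in Theorem \ref{t:dehn} itself; the corollary is a clean specialisation of that general result to the distinguished example $G = F$.
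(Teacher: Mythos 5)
Your proof is correct and follows exactly the route the paper takes: note that $F'$ is simple (hence perfect), so $F/F'' = F/F' \cong \mathbb{Z}^2$ is virtually nilpotent, then feed Guba's quadratic Dehn function for $F$ into Theorem~\ref{t:dehn}. The paper phrases this via an intermediate corollary for groups with perfect commutator subgroup, but the substance is identical.
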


{{ }It is also possible to estimate the Dehn function of $\X(G)$ is some situations where Theorem \ref{t:dehn} does not apply but where one nevertheless has control of $W$. This is the case, for example, 
when $G$ is a metabelian Baumslag-Solitar group ${\rm{BS}}(1,m)$ with $m>1$; Kochloukova and Sidki \cite{KochSidki} prove that $W = 1$ in this case, so it follows from Theorem \ref{Disonthm} that $\X(G)$ has an exponential Dehn function, since 
$G$ does.}

Related to $\X(G)$ one has the group $ {\mathcal{E}}(G) = ( G, \overline{G} \mid [\mathcal{D}, \mathcal{L}]=1)$  studied in \cite{LS}, where
$\mathcal{D} = [G,\-G]$ and ${\mathcal L} =  \langle \{ g^{-1} \-g \mid g \in G \} \rangle$ {{ } are subgroups of $G * \-G$}. The circumstances
under which  $ {\mathcal{E}}(G)$ is finitely presented are much more restrictive than for $\X(G)$; it is necessary
but not sufficient that $G$ be finitely presented --  see \cite{desi-EG}. Correspondingly, the
proof of the following result is much more straightforward than that of Theorem \ref{t:dehn}.

\begin{propA}\label{p:EG}
	Whenever $ {\mathcal{E}}(G)$ is finitely presented,  $\delta_{{\mathcal{E}}(G)}(n)
	\preceq \max\{n^4, \,  \delta_G(n)^2\}$.
\end{propA}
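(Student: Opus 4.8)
The plan is to deduce the bound from the structural description of $\mathcal{E}(G)$ in \cite{LS} and \cite{desi-EG}, which runs parallel to the description of $\X(G)$ via the map $\rho$ but in a far more rigid setting. Just as for $\X(G)$, there are three retractions $\mathcal{E}(G)\to G$ --- obtained by killing $\-G$, by killing $\mathcal{L}$, and by killing $G$ --- which assemble into a homomorphism $\rho\colon\mathcal{E}(G)\to G\times G\times G$ with $\rho(g)=(g,g,1)$ and $\rho(\-g)=(1,g,g)$, whose image is the subdirect product
$$P=\{(a,b,c)\in G\times G\times G : \ \-a\,\-b^{-1}\-c=1 \ \text{in}\ Q:=G/G'\}$$
and whose kernel is $\mathcal{W}=\mathcal{D}\cap\mathcal{L}$; thus $1\to\mathcal{W}\to\mathcal{E}(G)\to P\to 1$. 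The decisive difference with $\X(G)$ is that finite presentability of $\mathcal{E}(G)$ is here an extremely strong hypothesis: by \cite{desi-EG} it forces $G$ to be finitely presented, so that $\delta_G$ is defined, and it forces $\mathcal{W}$ to be a \emph{finitely generated} abelian group; moreover one checks that in this situation $\mathcal{E}(G)$ acts on $\mathcal{W}$ through a finite quotient of $Q$, so that after replacing $\mathcal{E}(G)$ by a subgroup of finite index --- which alters $\delta$ only up to $\simeq$ --- we may assume $\mathcal{W}\cong\Z^{k}$ is central. This is precisely the simplification that is unavailable for general $\X(G)$, and it is the reason the present argument is short.

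Granting this, it remains to bound $\delta_P$ and then to absorb the central $\Z^{k}$, and for the first of these the natural tool is Dison's isoperimetric estimates for subdirect products \cite{dison}. The group $P$ is a full subdirect product of three copies of the finitely presented group $G$: each projection of $P$ onto a pair of coordinates is surjective, the kernel of such a projection is a copy of $G'$ --- which is finitely generated because $P$ is finitely presented --- and the common quotient $Q$ is finitely generated abelian, hence of type $F_{\infty}$ with $\delta_Q(n)\preceq n^{2}$. I would verify that these facts place $P$ within the scope of the appropriate three-factor, asymmetric $1$--$2$--$3$ result of \cite{dison}, and that the estimate it yields is $\delta_P(n)\preceq\max\{n^{2},\delta_G(n)\}$. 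A finitely presented central extension $1\to\Z^{k}\to\mathcal{E}(G)\to P\to 1$ then satisfies the standard bound $\delta_{\mathcal{E}(G)}(n)\preceq\delta_P(n)^{2}$ (fill a length-$n$ loop in $P$ by a van Kampen diagram of area $\le\delta_P(n)$, lift it to $\mathcal{E}(G)$, and close up the resulting boundary word, which lies in $\Z^{k}$ and has length $O(\delta_P(n))$, at quadratic cost since $\delta_{\Z^{k}}$ is quadratic). Combining the two gives $\delta_{\mathcal{E}(G)}(n)\preceq\max\{n^{2},\delta_G(n)\}^{2}=\max\{n^{4},\delta_G(n)^{2}\}$, as required.

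The step I expect to be the main obstacle is the middle one: identifying exactly which theorem of \cite{dison} governs $P$ and checking its hypotheses --- in particular the finite generation of the pair-projection kernels and the homological finiteness of $Q$ --- and confirming that the contribution of the abelian quotient to the isoperimetric bound for $P$ is only linear in $n^{2}$ rather than worse (it would be genuinely worse were $Q$ not abelian, which is exactly why the hypothesis of Proposition~\ref{p:EG} is so much milder than a structural hypothesis on $G$). By contrast the two outer steps --- extracting from \cite{desi-EG} the short exact sequence with $\mathcal{W}$ finitely generated and central, and re-incorporating a finitely generated central subgroup at the end --- are routine, which is why this proof is so much shorter than that of Theorem~\ref{t:dehn}.
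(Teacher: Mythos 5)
Your overall architecture --- pass to a finite-index subgroup in which $\mathcal{W}$ is a finitely generated central subgroup, bound the Dehn function of the quotient (the image of $\rho$ in $G\times G\times G$), then apply the central-extension estimate $\delta_{\mathcal{E}(G)}(n)\preceq \delta_P(n)^2$ --- is exactly the paper's, and your sketch of the central-extension step is essentially Lemma \ref{central}. The gap is in the middle step, precisely where you flagged the main obstacle. The fact you have not extracted from \cite{desi-EG} (Theorem \ref{Ethm}) is that finite presentability of $\mathcal{E}(G)$ forces $G/G'$ to be \emph{finite}, not merely that $G$ is finitely presented and $\mathcal{W}$ finitely generated. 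Without it, several of your assertions are unjustified: that $\mathcal{E}(G)$ acts on $\mathcal{W}$ through a finite quotient of $Q$ (this is what allows the passage to a finite-index subgroup with $\mathcal{W}$ central; the paper simply takes $\mathcal{D}\mathcal{L}$, which has index $|Q|<\infty$); that the pair-projection kernels of $P$, which are copies of $G'$, are finitely generated (false in general: for $G=F_2$ the subgroup $P$ is finitely presented by the Virtual Surjection to Pairs theorem, yet $F_2'$ is not finitely generated); and, most importantly, that Dison's theorem yields $\delta_P(n)\preceq\max\{n^2,\delta_G(n)\}$. Theorem \ref{Disonthm} gives the isoperimetric function $\rho^{2r}\alpha$, where $r$ is the corank of the coabelian subgroup; when $Q$ has positive rational rank this contributes an extra corank-dependent polynomial factor, not the corank-free bound you claim --- indeed the unavoidability of such factors is exactly why Theorem \ref{t:dehn} asserts only a polynomial bound of unspecified degree.

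Once you invoke the finiteness of $Q=G/G'$, the corank is zero, but then Dison's machinery is unnecessary altogether: $P={\rm{im}}(\rho)$ has finite index in $G\times G\times G$, so by Lemma \ref{l:qi} its Dehn function is $\simeq\max\{n^2,\delta_G(n)\}$, and your central-extension step finishes the argument, giving $\max\{n^4,\delta_G(n)^2\}$ as required; this is precisely the paper's proof. As written, however, the estimate $\delta_P(n)\preceq\max\{n^2,\delta_G(n)\}$ does not follow from the hypotheses you actually verify, so the proposal has a genuine (though easily repaired) gap.
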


In the final section of this paper, we return to the search for classes of groups that are
closed under passage from $G$ to  $\X(G)$. Our focus now is on the growth of groups.
We remind the reader that the growth function of a group $G$ with  finite
generating set $S$ counts the number of elements in balls of increasing radius in the word metric $d_S$.
The growth type is polynomial, exponential, or intermediate (i.e.~sub-exponential but faster than any polynomial).

We shall see that the main arguments in the proof of Theorem \ref{t:nilp}
can be abstracted to provide the following criterion for closure under $\X$; the key point is the control on
the $\Z Q$-module  $W$ provided by Theorem \ref{propB}.

\begin{thmA} \label{grw} Let $\mathcal P$ be a class of finitely generated groups such that
\begin{enumerate}
\item[$\bullet$] every metabelian group in $\mathcal{P}$ is virtually nilpotent;
\item[$\bullet$] $\mathcal P$ is closed under the taking of finitely generated
subgroups,  quotients, central extensions,  extensions by and of finite groups,
and finite direct products.
\end{enumerate}
Then $\X(G) \in {\mathcal P}$ if and only if $G \in {\mathcal P}$.
\end{thmA}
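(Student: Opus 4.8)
The plan is to prove both implications, the forward one being immediate: $G$ is the quotient $\X(G)/L$, so if $\X(G)\in\mathcal P$ then $G\in\mathcal P$ by closure of $\mathcal P$ under quotients. For the converse, assume $G\in\mathcal P$. Since $G$ is finitely generated so is $\X(G)$, hence so is every quotient of $\X(G)$; I shall use this repeatedly, reading ``$\mathcal P$ is closed under central extensions'' in the form: a finitely generated group that has a central subgroup with quotient in $\mathcal P$ itself lies in $\mathcal P$.

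First I would draw out the consequences of the metabelian hypothesis. The group $G/G''$ is a quotient of $G$, hence lies in $\mathcal P$, hence --- being metabelian --- is virtually nilpotent. Thus $A:=G'/G''$ is a finitely generated abelian group, and $M=(A\otimes_{\mathbb Z}A)_{Q_0}$ is finitely generated abelian. Moreover, picking a finite-index nilpotent subgroup $N\trianglelefteq G/G''$, the elementary inclusions $[A,N]\le A\cap N$ and $[A,\underbrace{N,\dots,N}_{k}]\le\gamma_{k}(N)$ show that $N$, and therefore the finite-index subgroup $Q_1:=NA/A\le Q$ (which acts on $A$ just as $N$ does), acts nilpotently by conjugation on $A$; since the $\mathbb Z Q$-action on $M$ is induced from the conjugation action on a tensor factor $A$, it follows that $Q_1$ acts nilpotently on $M$. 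Now apply Theorem \ref{propB}: it furnishes a $\mathbb Z Q$-submodule $W_0\le W$ that is a subquotient of $M$, so $Q_1$ acts nilpotently on $W_0$; since $Q$, and a fortiori $Q_1$, acts nilpotently on $W/W_0$, and an extension of nilpotent actions is nilpotent, $Q_1$ acts nilpotently on the whole of $W$.

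The endgame is then a short ascent. From the exact sequence $1\to W\to\X(G)\xrightarrow{\rho}G\times G\times G$ one has $\X(G)/W\cong\rho(\X(G))$, a finitely generated subgroup of $G^3$; as $\mathcal P$ is closed under finite direct products and under finitely generated subgroups, $\X(G)/W\in\mathcal P$. Let $\X(G)_1$ be the preimage of $Q_1$ under $\X(G)\to Q=\X(G)/DL$: this is a finitely generated, finite-index, normal subgroup of $\X(G)$, it contains $W$ (because $W\le DL$), and its conjugation action on $W$ factors through $Q_1$, hence is nilpotent. With $I_1$ the augmentation ideal of $\mathbb Z Q_1$ and $V_j:=I_1^{\,j}W$, nilpotency yields a chain $W=V_0\supseteq V_1\supseteq\cdots\supseteq V_t=0$ of $\mathbb Z Q_1$-submodules, each normal in $\X(G)_1$, with $V_{j-1}/V_j$ central in $\X(G)_1/V_j$. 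Since $\X(G)_1/W$ is a finitely generated subgroup of $\X(G)/W\in\mathcal P$ it lies in $\mathcal P$, and climbing the tower $\X(G)_1/V_0,\dots,\X(G)_1/V_t$ --- each step a finitely generated central extension of a group already shown to be in $\mathcal P$ --- gives $\X(G)_1\in\mathcal P$. Finally $\X(G)$ is an extension of $\X(G)_1\in\mathcal P$ by the finite group $Q/Q_1$, so $\X(G)\in\mathcal P$ by closure under extensions by finite groups.

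The step I expect to be the crux is the recognition that the $\mathbb Z Q$-action on the submodule $W_0$ of Theorem \ref{propB} is only \emph{virtually} nilpotent: through that theorem it is governed by the conjugation action of $Q=G/G'$ on $G'/G''$, and when $G/G''$ is merely virtually nilpotent this action need not be nilpotent. For this reason one cannot run the central-extension tower over $\X(G)$ itself but must localise to the finite-index subgroup $\X(G)_1$, on which the action does become nilpotent, and then recover $\X(G)$ from the finite quotient $Q/Q_1$; all the remaining manipulations are routine bookkeeping with the closure axioms, the single essential input being the structural control of $W$ given by Theorem \ref{propB}.
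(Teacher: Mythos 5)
Your argument is correct and follows the same overall strategy as the paper's: descend to $\X(G)/W\cong{\rm{im}}(\rho)\in\mathcal P$ via closure under subgroups and finite direct products, pass to a finite-index subgroup $\X(G)_1$, filter $W$, and climb the resulting tower of extensions, with Theorem \ref{propB} supplying the essential control on $W$. The genuine point of difference lies in how the filtration is obtained. The paper routes this through Corollary \ref{action21}, whose proof works with the finite-index submodule $A_1=(T/G'')\cap A$ of $A=G'/G''$ and its image $M_1\le M$; accordingly the filtration of $W$ has some quotients on which $Q_1$ acts trivially and others that are merely finite, and the Proposition in Section \ref{s:last} therefore uses a mix of central-kernel and finite-abelian-kernel extensions. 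You observe instead that if the nilpotent finite-index subgroup $N\trianglelefteq G/G''$ is taken normal, then $[A,N]\le A\cap N$ and $[A,\underbrace{N,\dots,N}_{k}]\le\gamma_k(N)$ show that $Q_1=NA/A$ acts nilpotently on \emph{all} of $A$, not merely on $A_1$; hence $Q_1$ acts nilpotently on $M$, on $W_0$ as a subquotient of $M$, and -- combined with Theorem \ref{propB}(2) -- on all of $W$. This mildly sharpens Corollary \ref{action21} and makes the tower over $\X(G)_1/W$ consist purely of central extensions, with finiteness entering only at the final step $\X(G)/\X(G)_1\cong Q/Q_1$. One point to flag: the variant of the theorem mentioned after its statement, where $\mathcal P$ is closed only under central extensions with \emph{finitely generated} kernel, additionally needs $W$ finitely generated (Lemma \ref{l:KS}); your argument proves the theorem as stated without appealing to this.
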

If one drops the requirement that $\mathcal P$ is closed under direct products, then the conclusion
has to be modified:  $\X(G) \in {\mathcal P}$ if and only if ${\rm{im }} (\rho) \in {\mathcal P}$,
where $\rho:\X(G)\to G\times G\times G$ is as described above.
Both versions of the theorem remain true if the only central extensions under which $\mathcal{P}$
is assumed closed are those with finitely generated kernel. And in either version, it follows
from \cite{KochSidki} that $W(G)$ is finitely generated as an abelian group for all $G\in{\mathcal P}$.

Gromov \cite{Gromov} proved that groups of polynomial growth are virtually nilpotent, and the converse is
straightforward. Thus the first part of the following corollary is covered by Theorem \ref{t:nilp}. 
The closure properties required in the other cases
are discussed in Section \ref{s:growth}.

\begin{corA} \label{grw2}
Let $G$ be a finitely generated group. Then
\begin{enumerate}
\item  $\X(G)$ has  polynomial growth if and only if $G$ has polynomial growth;
\item $\X(G)$ has exponential growth if and only if $G$ has exponential growth;
\item $\X(G)$ has intermediate growth if and only if $G$ has intermediate growth.
\end{enumerate}
\end{corA}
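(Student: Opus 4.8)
The plan is to deduce all three equivalences from Theorem~\ref{grw} (together with Theorem~\ref{t:nilp} and Gromov's theorem), using only the fact that every finitely generated group has exactly one growth type -- polynomial, intermediate, or exponential -- and that polynomial growth coincides with virtual nilpotence. Part~(1) is already in hand: by Gromov~\cite{Gromov} a finitely generated group has polynomial growth if and only if it is virtually nilpotent, so $\X(G)$ has polynomial growth if and only if $\X(G)$ is virtually nilpotent, which by Theorem~\ref{t:nilp} holds whenever $G$ is virtually nilpotent; the converse is trivial since the splitting $\X(G)=L\rtimes G$ exhibits $G$ as a retract -- hence a subgroup -- of $\X(G)$, and virtual nilpotence passes to subgroups. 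For parts~(2) and~(3) I would apply Theorem~\ref{grw} to the class $\mathcal{P}$ of finitely generated groups of subexponential growth. Granting that $\mathcal{P}$ satisfies the hypotheses of Theorem~\ref{grw} (in the version requiring closure only under central extensions with finitely generated kernel), one obtains
\[
\X(G)\ \text{has subexponential growth}\iff G\ \text{has subexponential growth}.
\]
Part~(2) is the contrapositive of this, since ``exponential growth'' means ``not subexponential growth''. For part~(3): $G$ has intermediate growth precisely when $G$ has subexponential growth but is not virtually nilpotent, and by the displayed equivalence together with part~(1) this happens precisely when $\X(G)$ has subexponential growth and is not virtually nilpotent, i.e.\ when $\X(G)$ has intermediate growth.

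It remains to verify the two bullet points of Theorem~\ref{grw} for $\mathcal{P}$. For the first, a finitely generated metabelian (indeed solvable) group of subexponential growth has polynomial growth by the Milnor--Wolf theorem, hence is virtually nilpotent. For the second, closure under finitely generated subgroups and under quotients is immediate from $\beta_{H}\preceq\beta_{G}$ for finitely generated $H\le G$ and $\beta_{G/N}\preceq\beta_{G}$; closure under extensions by and of finite groups holds because passage to a subgroup or overgroup of finite index leaves the growth type unchanged; and closure under finite direct products holds because $\beta_{A\times B}(n)\le\beta_{A}(n)\,\beta_{B}(n)$.

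The substantive point -- and the step I expect to be the main obstacle -- is closure under central extensions with finitely generated kernel: given a central extension $1\to A\to E\to Q\to 1$ with $A$ finitely generated abelian and $Q$ of subexponential growth, one must show that $E$ has subexponential growth. After peeling off the finite torsion subgroup of $A$ (which changes $E$ only by a finite quotient) we may take $A\cong\Z^{k}$. Counting the fibres of the projection $B_{E}(n)\to B_{Q}(n)$ -- each a coset of $A$ meeting $B_{E}(n)$, hence of size at most $\lvert A\cap B_{E}(2n)\rvert$ -- gives $\beta_{E}(n)\le\beta_{Q}(n)\cdot\lvert A\cap B_{E}(2n)\rvert$, so everything reduces to showing that the central subgroup $A$ is at most subexponentially distorted in $E$, i.e.\ that $\lvert A\cap B_{E}(n)\rvert$ grows subexponentially in $n$. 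This is exactly where the hypothesis on $Q$ must be exploited, and it is the heart of the closure argument to be carried out in Section~\ref{s:growth}; once it is established, Corollary~\ref{grw2} follows as above. (That the central extension arising in the proof of Theorem~\ref{grw} genuinely has finitely generated kernel is guaranteed by the finite generation of $W(G)$ as an abelian group, which comes from \cite{KochSidki} as recorded after Theorem~\ref{grw}.)
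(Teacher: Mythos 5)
Your overall plan is exactly the paper's: part (1) from Gromov's theorem together with Theorem~\ref{t:nilp} and the fact that $G$ is a retract of $\X(G)$; parts (2) and (3) by applying Theorem~\ref{grw} to the class $\mathcal{P}$ of finitely generated groups of subexponential growth, invoking Milnor--Wolf for the metabelian hypothesis and observing the routine closure properties. The paper does all of this in Proposition~\ref{p:grow}.

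However, there is a genuine gap at precisely the point you flag: you reduce closure under central extensions (with finitely generated kernel) to the assertion that if $1\to A\to E\to Q\to 1$ is central with $A$ finitely generated and $Q$ of subexponential growth, then $A$ is subexponentially distorted in $E$, and then you leave this unproven. This is not a step one can wave past -- it is the content of the paper's Lemma~\ref{cent-subexp}, whose proof is the only nontrivial work in the whole corollary. The paper's argument does not go via your distortion reformulation; instead it fixes generators $T$ for $Z$ and $Y$ with $\langle Y\rangle Z=G$, reduces to showing $H:=\langle Y\rangle$ has subexponential growth, and proves this by a chunking argument: given $\varepsilon>0$, take $N_\varepsilon$ witnessing subexponential growth of $\overline{H}=G/Z$, write an element $h\in H$ of length $n$ as a product of $k=\lceil n/N_\varepsilon\rceil$ pieces of length at most $N_\varepsilon$, and use \emph{centrality} of $Z$ to collect the $Z$-defects $\zeta_{h_i}=h_i\sigma(\overline{h}_i)^{-1}$ to one side. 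Each defect has $T$-length at most a constant $\lambda_\varepsilon$, so their product ranges over a set of size polynomial in $k$, while the products of section values $\sigma(\overline{h}_i)$ range over a set of size at most $e^{\varepsilon kN_\varepsilon}$; combining gives $\limsup_n \tfrac{1}{n}\log|B_{H,Y}(n)|\le\varepsilon$. Your fibre-counting inequality $\beta_E(n)\le\beta_Q(n)\cdot|A\cap B_E(2n)|$ is correct and would suffice if the subexponential distortion of $A$ were known, but establishing that distortion bound requires essentially the same chunking argument, so the reduction does not save work. Until this lemma is supplied, the proposal proves parts (2) and (3) only conditionally.
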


In the context of Theorem \ref{grw} we shall also discuss  the class of finitely generated
groups that have the fixed-point property for cones, as studied by Monod in \cite{Monod} (see Section \ref{s:last}).

This paper is organised as follows. After gathering necessary preliminaries, in Section 2 we prove Proposition A and Theorem B,
and in Section 3 we prove Theorem D. In Section 4 we prove Theorem C and a related structural result that is needed
in later sections, Corollary \ref{action21}. Sections 5 and 8 contain our results concerning isoperimetric inqualities and word problems.
Engel groups are discussed in Section 6 and Proposition \ref{p:EG} is proved in Section 7. In the final section, we extract a proof of
Theorem \ref{grw} from the arguments in earlier sections, discuss the growth of groups, and prove Corollary \ref{grw2}.

\medskip
\noindent{\em Acknowledgement.} We thank Nicolas Monod for correspondence concerning Section \ref{s:last}. 

\section{Preliminaries}

Throughout, our notation for conjugation is $x^y:=y^{-1}xy$, our commutator convention
is $[x,y]=x^{-1}y^{-1}xy$, and iterated commutators are left-normed 
$[a_1,\dots,a_n]:=[[a_1,\dots,a_{n-1}],a_n]$. 

We assume that the reader is familiar with the use of homological methods in
group theory as described in \cite{bieri} and \cite{brown}.

\subsection{On the structure of the group $\X(G)$} \label{s:structure} 

We recall some subgroups and decompositions of $\X(G)$ defined by Sidki \cite{Said}.
We follow the notations from \cite{Said} except that we write $\-G$ and $\-g$
where Sidki writes $G^\psi$ and $g^\psi$. In the introduction we defined
$$
D = D(G) := [G,\-{G}]\ \ {\rm{and}}\ \  L = L(G) :=  \<\!\< \{ g^{-1} \-{g} \mid g \in G \} \>\!\>.
$$ 
Note that $D$ is the kernel of the natural map $\X(G)\to G\times\-{G}$ and that
$L$ is the kernel of the map $\X(G)\to G$ that sends both $g$ and $\-{g}$ to $g$.
This last map has an obvious splitting
$$
\X(G) = L\rtimes G.
$$

We adopt the notation $\l_g = g^{-1}\-{g}\in L$, where $g\in G$.
Following \cite{Said}, one sees that $L$ is actually generated (not just normally
generated) by the set of $\l_g$, 
because
\begin{equation}\label{Lnormal}
\l_u^x = x^{-1}u^{-1}\-{u}x = (ux)^{-1}\-{u}(\-x\-x^{-1})x = (ux)^{-1}(\-{ux})(\-x^{-1}x)= \l_{ux}\l_x^{-1}
\end{equation}
for all $x,u\in G$, and $\l_u^{\-x} = \l_u^{x\l_x} = \l_x^{-1}\l_u^x\l_x = \l_x^{-1}\l_{ux}$.

By taking the direct product of the maps with kernels  $L$ and $D$ (and re-ordering the factors) we obtain a map
$$
\rho: \X(G) \to G\times {G}  \times  \-{G}  \cong G\times G\times G$$
with 
$$
\rho(g) = (g,g,1), \ \ \rho(\-g) = (1,g,g) \hbox{ for all } g \in G.
$$
The kernel of $\rho$ is
$$\ W = W(G) := D \cap L.$$
Sidki showed in  \cite[Lemma~4.1.6~(ii)]{Said}  that $D$ commutes with
$L$ and therefore $W$  is central in $DL$; in particular, $W$ is abelian. 

The image of $\rho$ is
$$
\{ (g_1, g_2, g_3) \mid g_1 g_2^{-1}g_3 \in [{G}, G] \}.$$
Note that $\imm ( \rho)$ contains the commutator subgroup of $G\times G \times G$; {{ } it is the kernel of the epimorphism $G\times G\times G \to G/[G,G]$ whose restriction to the first and third cooridinates is $g\mapsto g[G,G]$ and whose restriction to the
second coordinate is $g\mapsto g^{-1}[G,G]$. }
The projection of  $\imm ( \rho)$ to each pair of coordinates in $G\times G\times G$ is onto, so by the Virtual Surjection to Pairs Theorem \cite{BHMS}, if
$G$ is finitely presented then $\imm ( \rho)$ is finitely presented. 

For the remainder of the paper, we fix the notation $Q:=G/[G,G]$. The action of $G<\X(G)$
by conjugation on $W$ and on $L/L'$ (hence on the homology of $L/W$) factors through $Q$.

\begin{lemma}  \label{nilpotent12} As $\Z Q$-modules, $ W \cap L'$ is a quotient of
	$H_2(L/ W, \mathbb{Z})$.
\end{lemma}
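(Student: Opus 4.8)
The plan is to exploit the central extension $1 \to W \to DL \to DL/W \to 1$ together with the five-term exact sequence in homology, after first identifying $DL/W$ and the relevant pieces. Since $D$ commutes with $L$ in $\X(G)$ and $W = D \cap L$ is central in $DL$, the quotient $DL/W$ splits as a direct product $(D/W) \times (L/W)$. The map $\rho$ restricted to $DL$ has image inside $G \times G \times G$, and one checks from the description of $\imm(\rho)$ that $\rho(D) = 1 \times [G,G] \times 1$ and $\rho(L) = \{(1,g,g) : g \in [G,G]\}$ under the identification, so that $D/W \cong L/W \cong G'$ as groups; what matters for us is only the $L/W$ factor. The conjugation action of $G$ (equivalently $Q$) respects this product decomposition, so everything can be run $Q$-equivariantly.

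The core of the argument is the standard exact sequence associated to the central extension $1 \to W \cap L' \to L' \to L'/(W \cap L') \to 1$, or more efficiently the exact sequence coming from $1 \to W \to L \to L/W \to 1$. First I would apply the five-term exact sequence for this extension:
$$
H_2(L,\Z) \longrightarrow H_2(L/W,\Z) \longrightarrow W/[L,W] \longrightarrow H_1(L,\Z) \longrightarrow H_1(L/W,\Z) \longrightarrow 0.
$$
Because $W$ is central in $DL \supseteq L$, we have $[L,W] = 1$, so the connecting map becomes $H_2(L/W,\Z) \to W$, and its image is exactly the kernel of $H_1(L,\Z) \to H_1(L/W,\Z)$. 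That kernel is the image of $W$ in $H_1(L,\Z) = L/L'$, namely $W/(W \cap L') = WL'/L'$. Hence $W \cap L'$ is precisely the kernel of the induced surjection $W \twoheadrightarrow WL'/L'$, and on the other hand $WL'/L'$ is the cokernel of $H_2(L/W,\Z) \to W$. To turn this into the statement that $W \cap L'$ is a quotient of $H_2(L/W,\Z)$, I would instead work with the relative version: the connecting homomorphism $\partial: H_2(L/W,\Z) \to W$ has image equal to $\ker(W \to L/L') = W \cap L'$ — this is the precise content of the five-term sequence once $[L,W]=1$ — so $W \cap L' = \partial(H_2(L/W,\Z))$ is a quotient of $H_2(L/W,\Z)$. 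All maps here are $G$-equivariant by naturality of the five-term sequence (the action of $G$ by conjugation is by automorphisms of the extension fixing $W$ setwise), and since $DL$ acts trivially the action factors through $Q$, giving the $\Z Q$-module statement.

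The one point that needs care, and which I expect to be the main (minor) obstacle, is the passage from "quotient of $H_2(L/W,\Z)$" to "quotient of $H_2(L/W,\Z)$ as a $\Z Q$-module" — i.e.\ checking that the connecting homomorphism $\partial$ is genuinely $Q$-equivariant, not merely equivariant up to the conjugation action of elements of $L$ on themselves, which is inner and hence trivial on homology. This follows from functoriality of the five-term sequence applied to the automorphism of the short exact sequence $1 \to W \to L \to L/W \to 1$ induced by conjugation by $g \in G$: such conjugation is an automorphism of $L$ carrying $W$ to $W$, so it induces compatible automorphisms of $H_2(L/W,\Z)$ and of $W$ commuting with $\partial$; inner automorphisms act trivially on $H_2(L/W,\Z)$, so the action descends to $Q = G/G'$ acting on $L$ modulo inner automorphisms, but one must also note that $L$ itself acts trivially on $W$ (centrality) and on $H_2(L/W,\Z)$ (inner), which is what makes the two actions compatible with the same $Q$-module structure. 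Once that bookkeeping is done, the lemma follows immediately.
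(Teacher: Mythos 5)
Your core argument --- the five-term exact sequence of the central extension $1 \to W \to L \to L/W \to 1$, where centrality gives $[L,W]=1$ so the sequence reads $H_2(L/W,\Z)\to W \to L/L' \to H_1(L/W,\Z)\to 0$, and exactness identifies $W\cap L' = \ker\bigl(W\to L/L'\bigr)$ with the image of the connecting map, hence a quotient of $H_2(L/W,\Z)$, with $\Z Q$-equivariance by naturality --- is correct and is exactly the proof given in the paper. One aside in your preamble is wrong but unused: since $\rho(\l_g)=(g^{-1},1,g)$, the image $\rho(L)\cong L/W$ is $\{(g_1,1,g_2)\mid g_1g_2\in G'\}$, not $\{(1,g,g)\mid g\in G'\}$, and in general $L/W\not\cong G'$; your actual argument never relies on this identification, so the lemma's proof stands as written.
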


\begin{proof} The 5-term exact sequence in homology associated to the short exact
	sequence $1\to W \to L \to L/W\to 1$ (where $W$ is central) gives an exact sequence of $\Z Q$-modules
	$$
	H_2(L/W, \Z) \to H_0(L/W, W) \to H_1(L,\Z) \to H_1(L/W,\Z)\to 0
	$$
	and the kernel of $H_0(L/W, W)=W\to H_1(L,\Z)$ is $W \cap L'$. 
\end{proof} 

We shall need later the following result of Kochloukova and Sidki \cite{KochSidki}.

\begin{lemma}\label{l:KS} Suppose that
	 $G$ is of type ${\rm{FP}}_2$. If $G'/G''$ is finitely generated, then so is   $W(G)$.
\end{lemma}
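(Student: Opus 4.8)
The plan is to reduce the lemma to a homological computation via Lemma~\ref{nilpotent12}. First I would peel off an elementary part: from the short exact sequence of abelian groups
$$0 \to W\cap L' \to W \to WL'/L' \to 0,$$
the quotient $WL'/L'$ embeds in $L/L'=H_1(L,\Z)$, and \eqref{Lnormal} shows that in $L/L'$ one has $[\l_u][\l_x]=[\l_{ux}]$, so $g\mapsto[\l_g]$ defines a surjection $Q=G/G'\twoheadrightarrow L/L'$. Since $G$ is finitely generated, $Q$ and hence $L/L'$ and $WL'/L'$ are finitely generated abelian groups. As an extension of a finitely generated abelian group by a finitely generated abelian group is again one, it therefore suffices to prove that $W\cap L'$ is finitely generated as an abelian group.

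By Lemma~\ref{nilpotent12}, $W\cap L'$ is a quotient of $H_2(L/W,\Z)$, so the task becomes to show that $H_2(L/W,\Z)$ is finitely generated. Here I would use the description of $L/W$ as the image
$$\rho(L)=\{(a,1,b)\in G\times G\times G : ab\in G'\},$$
which follows from $\rho(\l_g)=(g^{-1},1,g)$ and $\rho(\l_g\l_h\l_{gh}^{-1})=([g,h],1,1)$ together with the known image of $\rho$: projection to the last coordinate then realises $L/W$ as a split extension
$$1\to G'\to L/W\to G\to 1$$
with $G$ acting on $G'$ by conjugation. Feeding this into the Lyndon--Hochschild--Serre spectral sequence $E^2_{pq}=H_p(G,H_q(G',\Z))\Rightarrow H_{p+q}(L/W,\Z)$ reduces the problem, via the filtration of $H_2$, to showing that $H_2(G,\Z)$, $H_1(G,G'/G'')$, and $(H_2(G',\Z))_G$ are each finitely generated abelian groups. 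The first is finitely generated because $G$ is of type $\FP_2$; the second because $G'/G''$ is a finitely generated abelian group, hence a finitely generated $\Z G$-module, and $G$ is finitely generated.

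The essential point, and what I expect to be the main obstacle, is the third term $(H_2(G',\Z))_G$. Since inner automorphisms act trivially on homology, $G'$ acts trivially on $H_2(G',\Z)$ and $(H_2(G',\Z))_G=(H_2(G',\Z))_Q$. I would extract finite generation of this group from the spectral sequence of $1\to G'\to G\to Q\to 1$: the only differentials meeting the $(0,2)$-entry originate in $E^2_{2,1}=H_2(Q,G'/G'')$ and in a subquotient of $E^2_{3,0}=H_3(Q,\Z)$, while $E^\infty_{0,2}$ embeds in $H_2(G,\Z)$. As $Q$ is a finitely generated abelian group it is of type $\FP_\infty$, so $H_2(Q,G'/G'')$ and $H_3(Q,\Z)$ are finitely generated abelian groups, and $H_2(G,\Z)$ is finitely generated since $G$ is $\FP_2$; hence $E^\infty_{0,2}$ and the images of the two differentials are finitely generated. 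Running through the successive extensions $(H_2(G',\Z))_Q=E^2_{0,2}\twoheadrightarrow E^3_{0,2}\twoheadrightarrow E^\infty_{0,2}$, whose kernels are exactly these images, then shows $(H_2(G',\Z))_Q$ is finitely generated. Consequently $H_2(L/W,\Z)$, hence $W\cap L'$, hence $W$, is a finitely generated abelian group.

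Beyond that, the argument is bookkeeping: the elementary identification $L/W\cong G'\rtimes G$, the fact that ``finitely generated abelian'' is inherited by subgroups, quotients and extensions of the groups that arise, and the observation that Lemma~\ref{nilpotent12} gives a quotient of $\Z Q$-modules, of which only the underlying abelian group is needed here. The one genuinely delicate input remains that $(H_2(G',\Z))_G$ is not controlled directly by the hypotheses, which is why the auxiliary extension $1\to G'\to G\to Q\to 1$ and the triviality of the inner action on homology have to be brought in.
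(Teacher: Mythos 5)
The paper does not prove this lemma itself; it cites Kochloukova--Sidki \cite{KochSidki}. Your spectral-sequence reconstruction is, in outline, a sound and workable route: reduce to $W\cap L'$ via $W/(W\cap L')\hookrightarrow L/L'$ and Lemma~\ref{nilpotent12}, identify $L/W$ with $S=\{(a,b):ab\in G'\}<G\times G$, run the LHS spectral sequence for $1\to G'\to S\to G\to 1$, and handle the delicate term $(H_2(G',\Z))_Q$ by a second LHS spectral sequence for $1\to G'\to G\to Q\to 1$, using that $Q$ is finitely generated abelian (hence $\FP_\infty$) and $G$ is $\FP_2$. That analysis of $E^2_{0,2}$ via $E^\infty_{0,2}\subseteq H_2(G,\Z)$ and the incoming differentials from $H_2(Q,G'/G'')$ and a subquotient of $H_3(Q,\Z)$ is correct, and it is indeed the crux of the argument.

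However, there is a genuine error at the first step. From \eqref{Lnormal} one has $\l_u^x=\l_{ux}\l_x^{-1}$, so in $L/L'$ the identity is $[\l_{ux}]=[\l_u^x]\,[\l_x]$, \emph{not} $[\l_{ux}]=[\l_u][\l_x]$. The conjugation action of $G$ on $L/L'$ is not trivial in general (Proposition~\ref{l:nilp} shows it is nilpotent, but typically nontrivial), so $[\l_u^x]\neq[\l_u]$ and $g\mapsto[\l_g]$ is \emph{not} a homomorphism; if it were, $Q$ would act trivially on $L/L'$, which is false. Thus the claimed surjection $Q\twoheadrightarrow L/L'$ does not exist, and the finite generation of $L/L'$ does not follow this way. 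The conclusion that $L/L'$ is finitely generated is nonetheless true --- it is the result of Lima and Oliveira \cite{LO} cited in the paper (alternatively, that $L$ itself is finitely generated is proved in \cite{BK1}) --- so this step should be replaced by an appeal to that result rather than the incorrect homomorphism argument. A second, minor, inaccuracy is the assertion that $1\to G'\to L/W\to G\to 1$ is split: a splitting of the projection to the last coordinate would require an endomorphism of $G$ inducing inversion on $G/G'$, which need not exist (the candidate $g\mapsto(g^{-1},g)$ is a homomorphism only when $G$ is abelian). This does not damage the proof, since the Lyndon--Hochschild--Serre spectral sequence applies to arbitrary extensions, but the word ``split'' should be dropped.
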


\subsection{Dehn functions and isoperimetric inequalities}\label{s:dehn}

We recall some standard facts and terminology concerning the geometry of the word problem
in finitely presented groups. The reader unfamiliar with this material may wish to consult
\cite{bfs} for a more thorough introduction. 

Let $\P \equiv \< X \mid R\>$ be a finite presentation of a group $G$. By definition,
a word $w$ in the free group $F(X)$ represents the identity in $G$ if and only if 
it lies in the normal subgroup generated by the set $R$, which means that there is
an equality in $F(X)$ of the form
\begin{equation}\label{w:equ}
w = \prod_{i=1}^M \theta_i^{-1} r_i \theta_i
\end{equation}
with $r_i\in R\cup R^{-1}$ and $\theta_i\in F(X)$. The difficulty
of solving the word problem in $G$ by means of a na\"{i}ve attack seeking equalities of
this form depends on the size of $M$ in a minimal such expression, as well as the length
$|\theta_i|$ of the conjugating elements $\theta_i$. 

Van Kampen's Lemma establishes a correspondence between equalities in the free group
of this form (more formally, sequences $(\theta_i, r_i)_{i=1}^M$) and a class of contractible, labelled, planar diagrams; see, e.g. \cite{bfs}.
This correspondence enables one to invoke geometric arguments in pursuit of understanding 
the complexity of the word problem in $G$. It also 
explains the  following  terminology: $M$, the number of factors in the
product on the righthand side of the above equality, is defined to be 
the {\em{area}} of the product  and $\max \{ |\theta_i| : 1\le i \le m\}$ is defined
to be its {\em{radius}}.

For each word $w\in F(X)$ that represents the identity in $G$, one defines
${\rm{Area}}(w)$ to be the least area of any expression of this form for $w$, and
the {\em{Dehn function}} of $\P$ is the function $\delta: \N \to \N$ defined by
$$
\delta(n) = \max \{{\rm{Area}}(w) \mid w=_G1 \ {\rm{and}} \ |w|\le n\}.
$$

A pair of functions $(\alpha, \rho)$, with $\alpha, \rho : \N \to \N$, 
is called an {\em{area-radius pair}} for $P$
if for each word $w\in F(X)$ that represents the identity in $G$,
if $|w|\le n$ then there is an equality of the form (\ref{w:equ})
with area at most $\alpha(n)$ and radius at most $\rho(n)$. (It is important
to note that these bounds hold simultaneously.)

Different finite presentations of a group $G$
will give rise to different Dehn functions,
but they will be equivalent in the following sense. For functions $f,g:\N\to\N$,
define  $f \preceq g$ if there exists a 
positive integer  $C$ such that
$$
f(n) \leq C g(Cn + C) + Cn + C
$$
for all $n \in \N$, and define $f\simeq g$ if  $f \preceq g$ and $g  \preceq f$.

Likewise, if $(\alpha, \rho)$ is an area-radius pair for a finite presentation of
$G$ and $\P'$ is any other finite presentation of $G$, then there will exist
functions $\alpha', \rho' :\N\to \N$ with $\alpha\simeq \alpha'$ and $\rho \simeq
\rho'$ such that $(\alpha', \rho')$ is an area-radius pair for $\P'$ (see e.g.
\cite[Prop 3.15]{dison}).

Following common practice, we shall write $\delta_G$ to denote ``the" Dehn function of
a finitely presented group, with the understanding that this is only well-defined up
to $\simeq$ equivalence. Likewise, we shall refer to $(\alpha, \rho)$ as an area-radius pair for $G$ if $\alpha$ and $\rho$ are $\simeq$ equivalent to the functions in an
area-radius pair for a finite presentation of $G$.

It is easy to see that for constants $p,q\ge 1$, the functions $n\mapsto n^p$
and $n\mapsto n^q$ are $\simeq$ equivalent if and only if $p=q$.

A function $\beta : \N\to \N$ is called an {\em{isoperimetric function}} for $G$ if
$\delta_G \preceq \beta$. One says that $G$ {\em{satisfies a polynomial isoperimetric
		inequality}} if there is a constant $d$ such that $\delta_G(n) \preceq n^d$.
And an area-radius pair $(\alpha, \rho)$ for $G$ is said to be polynomial if 
both $\alpha$ and $\rho$ are bounded above by polynomial functions.

A simple cancellation argument (or diagrammatic argument) establishes the following
(see e.g. \cite[Lemma 2.2]{gersten} or \cite{bfs}). 

\begin{lemma} \label{l:pair} Let $G$ be a finitely presented
	group. If $\beta$ is an isoperimetric function for $G$, then (up to $\simeq$ equivalence)
	$(\beta, \beta)$ is an area-radius pair for $G$.
\end{lemma}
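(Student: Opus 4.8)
The plan is to extract an area–radius pair directly from the isoperimetric hypothesis by a length‑bookkeeping argument on van Kampen diagrams, keeping careful track of how the conjugating words grow. Fix a finite presentation $\P \equiv \<X\mid R\>$ of $G$, and suppose $\delta_G \preceq \beta$; replacing $\beta$ by an $\simeq$‑equivalent function we may assume $\beta$ is itself an isoperimetric function for $\P$, i.e. every null‑homotopic word $w$ with $|w|\le n$ has an expression of the form (\ref{w:equ}) of area at most $\beta(n)$. By van Kampen's Lemma such an expression corresponds to a reduced diagram $\D$ over $\P$ with boundary label $w$ and at most $\beta(n)$ two‑cells. The issue is that the combinatorial expression (\ref{w:equ}) obtained by ``reading off'' the diagram uses conjugating words $\theta_i$ that are paths from the basepoint to the various two‑cells, and a priori these can be long.

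First I would bound the diameter of $\D$. Since $\D$ is a planar diagram with at most $\beta(n)$ two‑cells, each bounded by at most $K:=\max_{r\in R}|r|$ edges, the total number of edges of $\D$ is at most $K\beta(n)+|w|$; hence any vertex of $\D$ is joined to the basepoint by an edge‑path of length at most $K\beta(n)+|w| =: \rho(n)$, and $\rho(n)\preceq \beta(n)$ using $|w|\le n$ and $\beta(n)\succeq n$ (which we may assume, as $\preceq$ allows adding $Cn+C$). Next I would recall the standard procedure — a ``spanning‑tree'' or successive‑peeling argument — that converts the diagram $\D$ into a product $\prod_{i=1}^M \theta_i^{-1}r_i\theta_i$ with $M$ equal to the number of two‑cells of $\D$, hence $M\le\beta(n)$, and with each $\theta_i$ a path in the $1$‑skeleton of $\D$ from the basepoint to (a chosen vertex on the boundary of) the $i$‑th two‑cell, traversed along a fixed spanning tree. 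Each such $\theta_i$ therefore has length at most $2\rho(n)$, so after absorbing constants we get an expression for $w$ of area $\le\beta(n)$ and radius $\le C\beta(n)+C$, i.e. simultaneous bounds of the required shape. This shows $(\beta,\beta)$ is (up to $\simeq$) an area–radius pair for $\P$.

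Finally I would invoke the invariance statement recorded just above the lemma: if $\P'$ is any other finite presentation of $G$, then there are functions $\alpha',\rho'$ with $\alpha'\simeq\beta$ and $\rho'\simeq\beta$ such that $(\alpha',\rho')$ is an area–radius pair for $\P'$, so ``$(\beta,\beta)$ is an area–radius pair for $G$'' makes sense up to $\simeq$. The one point that needs a little care — and which I regard as the main (mild) obstacle — is making the peeling argument genuinely simultaneous: one must check that the same spanning tree of $\D$ serves to define \emph{all} the conjugators $\theta_i$, so that the area bound and the radius bound are realised by a single expression rather than by two different ones. This is exactly the ``it is important to note that these bounds hold simultaneously'' caveat in the definition of an area–radius pair, and it is handled by the standard observation that a spanning tree of the (connected) $1$‑skeleton of a fixed diagram can be chosen once and for all, after which both counts are read off from that single combinatorial object. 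I would then simply cite \cite[Lemma 2.2]{gersten} or \cite{bfs} for the details of this well‑known cancellation/diagrammatic manipulation.
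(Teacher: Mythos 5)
Your argument is correct and reconstructs the standard diagrammatic argument that the paper outsources to \cite[Lemma 2.2]{gersten} and \cite{bfs}: a minimal-area van Kampen diagram for $w$ has at most $O(\beta(n)+n)$ edges, and the peeling (or spanning-tree) procedure reads off from that single diagram an expression $\prod_{i}\theta_i^{-1}r_i\theta_i$ with both $M\le\beta(n)$ and $\max_i|\theta_i|\preceq\beta(n)$, so the two bounds hold simultaneously as required. The stray factor of $2$ in ``$2\rho(n)$'' is harmless under $\simeq$, and the appeal to \cite[Prop.\ 3.15]{dison} correctly handles the change of presentation.
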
 

\begin{corollary}
	If a finitely presented group satisfies a polynomial isoperimetric inequality, then
	it has a polynomial area-radius pair.
\end{corollary}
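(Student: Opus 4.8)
The plan is to read this off directly from Lemma \ref{l:pair}; there is no substantial new argument to make. First I would unpack the hypothesis. To say that the finitely presented group $G$ satisfies a polynomial isoperimetric inequality means, by the definition recalled above, that there is a constant $d$ with $\delta_G(n)\preceq n^d$. Put $\beta(n):=n^d$. Then $\delta_G\preceq\beta$, so $\beta$ is an isoperimetric function for $G$ in the required sense.

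Next I would invoke Lemma \ref{l:pair} with this particular $\beta$. It gives that $(\beta,\beta)=(n^d,n^d)$ is an area-radius pair for $G$; unravelling the conventions fixed above, this means that some finite presentation of $G$ admits an area-radius pair $(\alpha,\rho)$ with $\alpha\simeq n^d$ and $\rho\simeq n^d$. It then remains only to observe that any function $f$ with $f\simeq n^d$ is bounded above by a polynomial: indeed $f\preceq n^d$ gives $f(n)\le C(Cn+C)^d+Cn+C$ for some positive integer $C$, and the right-hand side is a polynomial in $n$. Hence $\alpha$ and $\rho$ are both polynomially bounded, so $(\alpha,\rho)$ is a polynomial area-radius pair for $G$.

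The only step that is not entirely formal is this last observation, that the $\simeq$-equivalence class of a polynomial function consists of polynomially bounded functions; but it is immediate, since pre- and post-composition with affine maps sends polynomials to polynomials. So there is no genuine obstacle: the content is carried entirely by Lemma \ref{l:pair}, and the corollary is just the translation of that lemma into the language of polynomial growth.
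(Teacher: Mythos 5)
Your proof is correct and follows exactly the route the paper intends: the corollary is stated without proof immediately after Lemma \ref{l:pair} precisely because it is the direct translation you give, with the only (minor) point to check being that the $\simeq$-equivalence class of $n\mapsto n^d$ consists of polynomially bounded functions, which you verify.
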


We need two other easy and well-known facts about isoperimetric functions, the second of which
is a special case of the quasi-isometric invariance of Dehn functions.

\begin{lemma} \label{l:retract} If $H$ is a retract of the finitely presented group $G$, then  (up to $\simeq$ equivalence)
	every area-radius pair for $G$ is also an area-radius pair for $H$.
\end{lemma}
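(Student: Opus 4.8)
The plan is to unwind the definitions of retract and of area–radius pair, and to push words back and forth along the retraction maps. Write $r\colon G\to H$ for the retraction and $\iota\colon H\hookrightarrow G$ for the inclusion, so $r\circ\iota=\mathrm{id}_H$. Since the notions of Dehn function and area–radius pair are independent of the chosen finite presentation up to $\simeq$ equivalence (as recalled just before the statement), I am free to choose convenient finite presentations. First I would fix a finite presentation $\langle X\mid R\rangle$ of $H$, then enlarge it to a finite presentation $\langle X\cup Y\mid R\cup S\rangle$ of $G$, where $Y$ is a finite generating set for a normal complement's worth of extra generators and $S$ consists of the extra relators together with, for each $y\in Y$, a relator of the form $y^{-1}u_y$ with $u_y$ a word in $X^{\pm1}$ representing $r(y)\in H$; such a presentation exists because $r$ is a retraction, so adding these relations to a presentation of $G$ does not change the group while making the retraction "visible" on generators. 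With these presentations in hand, the retraction $r$ is realised at the level of free monoids by a length-non-increasing (in fact length-multiplying by a bounded constant) substitution $\pi\colon F(X\cup Y)\to F(X)$ sending $x\mapsto x$ for $x\in X$ and $y\mapsto u_y$ for $y\in Y$.

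Now suppose $(\alpha,\rho)$ is an area–radius pair for the presentation $\langle X\cup Y\mid R\cup S\rangle$ of $G$. Let $w$ be a word in $F(X)$, of length at most $n$, representing the identity in $H$; then $w$ also represents the identity in $G$ (via $\iota$), so there is an equality in $F(X\cup Y)$ of the form $w=\prod_{i=1}^{M}\theta_i^{-1}r_i\theta_i$ with $r_i\in (R\cup S)^{\pm1}$, $\theta_i\in F(X\cup Y)$, $M\le\alpha(n)$, and $\max_i|\theta_i|\le\rho(n)$. I would then apply the substitution $\pi$ to this entire equality. On the left we get $\pi(w)=w$, since $w$ already lies in $F(X)$. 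On the right we get $\prod_{i=1}^{M}\pi(\theta_i)^{-1}\pi(r_i)\pi(\theta_i)$. Each $\pi(r_i)$ is a word in $F(X)$ representing the identity in $H$: if $r_i\in R^{\pm1}$ this is clear since $\pi$ fixes $X$ and $R$ already holds in $H$; if $r_i\in S^{\pm1}$ is an extra relator of $G$, then $\pi(r_i)$ represents $1$ in $H$ because $\pi$ is exactly the word-level incarnation of the retraction $r$, which kills all relators of $G$ — in particular $\pi(y^{-1}u_y)=u_y^{-1}u_y=1$ in $F(X)$ for the new generator-relators, and $\pi$ applied to the pulled-back relators of $G$ gives words trivial in $H$. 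Hence each $\pi(r_i)$ has bounded area over the presentation $\langle X\mid R\rangle$, say at most a constant $K=\max_{r\in R\cup S}\mathrm{Area}_H(\pi(r))$, and bounded length. Expanding each $\pi(r_i)$ as a product of at most $K$ conjugates of relators of $H$ and conjugating through by $\pi(\theta_i)$, we obtain an expression for $w$ over $\langle X\mid R\rangle$ of area at most $KM\le K\alpha(n)$ and with conjugating elements of length at most $|\pi(\theta_i)|+(\text{bounded length of internal conjugators})\le C\rho(n)+C'$ for suitable constants, since $|\pi(\theta)|\le C|\theta|$ with $C=\max_{y\in Y}|u_y|$. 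Thus $(\alpha,\rho)$ is, up to the constants absorbed in $\simeq$, an area–radius pair for $H$.

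The only mild subtlety — and the step I would be most careful about — is bookkeeping the length of the conjugating words after expanding each $\pi(r_i)$ into its bounded-area product of $H$-relator conjugates: one must check that the internal conjugators arising in that fixed, finite collection of expansions have length bounded by a constant independent of $n$, so that composing with $\pi(\theta_i)$ keeps the radius within $O(\rho(n))$. This is immediate because there are only finitely many relators $r\in R\cup S$, each $\pi(r)$ is a single fixed word, and one fixes once and for all a choice of bounded-area expression for each; there is no interaction with $n$. Everything else is a routine translation between free-group equalities under a monoid substitution, exactly the mechanism underlying quasi-isometry invariance of Dehn functions, of which this lemma is the promised special case.
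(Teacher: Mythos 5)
Your overall strategy --- push a $G$-filling of $w$ through the free-group substitution $\pi\colon F(X\cup Y)\to F(X)$ induced by the retraction, and then expand each of the finitely many words $\pi(r)$, $r$ a relator of $G$, by a fixed bounded-area, bounded-radius filling over $\langle X\mid R\rangle$ --- is the standard argument for this fact, which the paper states without proof as well known; modulo the point below, your argument is complete, including the radius bookkeeping at the end.

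The one genuine error is in your choice of presentation for $G$. You cannot add the relators $y^{-1}u_y$ (with $u_y\in F(X)$ a word representing $r(y)$) to a presentation of $G$: the relation $y=\iota(r(y))$ does not hold in $G$ unless $y$ already lies in $\iota(H)$. In fact the normal closure in $G$ of the elements $y\,\iota(r(y))^{-1}$ (together with the trivially satisfied relations for $x\in X$) is exactly $\ker r$, so the presentation $\langle X\cup Y\mid R\cup S\rangle$ as you describe it presents $G/\ker r\cong H$ rather than $G$; for instance, if $G=H\times K$ with $r$ the projection and $y\in K$ a nontrivial generator, then $u_y$ is empty and your relator reads $y=1$. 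Consequently ``an area--radius pair for this presentation of $G$'' is not a meaningful hypothesis as stated. Fortunately those relators are never needed: your argument only requires (i) some finite presentation of $G$ on a generating set containing $X$, with each $x\in X$ representing $\iota(x)$, and (ii) the fact that $\pi$ sends every relator of $G$ to a word trivial in $H$, which holds because the homomorphisms $F(X\cup Y)\xrightarrow{\ \pi\ }F(X)\to H$ and $F(X\cup Y)\to G\xrightarrow{\ r\ }H$ agree on generators and hence are equal --- exactly the observation you invoke for the ``pulled-back'' relators. Delete the relators $y^{-1}u_y$ (and the claim that adding them does not change the group), and the proof is correct as it stands.
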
 

\begin{lemma} \label{l:qi} Let $\phi:H \to G$ be a homomorphism of finitely presented groups.
	If the kernel  of $\phi$ is finite and $\phi(H)$ has finite index in $G$,
	then the Dehn function of $H$ is $\simeq$ equivalent to the Dehn function of $G$. 
\end{lemma}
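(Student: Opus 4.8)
The plan is to reduce the statement to the two classical facts that the Dehn function is unchanged (up to $\simeq$) by passing to a finite-index subgroup and by quotienting by a finite normal subgroup, and then to interpolate a third group between $H$ and $G$ to separate these two phenomena. Concretely, set $N=\ker\phi$ and $G_0=\phi(H)$; then $N\ns H$ is finite, $\phi$ factors as $H\twoheadrightarrow H/N\xrightarrow{\ \cong\ }G_0\hookrightarrow G$, and $G_0$ has finite index in $G$. I would prove $\f_H\simeq\f_{H/N}$ (finite normal subgroup) and $\f_{G_0}\simeq\f_G$ (finite index), and then conclude since $H/N\cong G_0$.

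First I would handle the finite-index step. If $G_0\le G$ has finite index, pick a finite presentation $\langle X\mid R\rangle$ of $G$ and apply the Reidemeister--Schreier rewriting process to obtain a finite presentation of $G_0$; a loop in the Cayley complex of $G_0$ lifts/projects to a loop in the Cayley complex of $G$ of comparable length, and a van Kampen diagram over one presentation can be converted into a diagram over the other with area changed only by a bounded multiplicative constant (this is the standard argument that $G_0$ and $G$ are quasi-isometric, so have $\simeq$-equivalent Dehn functions; see e.g. the references in \cite{bfs}). Next I would handle the finite-kernel step. If $N\ns H$ is finite with $H/N=:\bar H$, choose a finite presentation $\langle Y\mid S\rangle$ of $\bar H$; adding the finitely many elements of $N$ as generators together with finitely many relations describing their products and their conjugates by the $Y$'s yields a finite presentation of $H$. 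A word $w$ of length $\le n$ in the generators of $H$ that is trivial in $H$ maps to a word $\bar w$ of length $\le n$ that is trivial in $\bar H$; fill $\bar w$ with area $\le\f_{\bar H}(n)$, lift this filling to $H$ at the cost of a bounded multiplicative factor (the lifted cells represent elements of $N$, and one closes up the resulting ``error'' words, each of length $O(1)$ in the $N$-generators, using the finitely many extra relations, again at bounded multiplicative cost). This gives $\f_H\preceq\f_{\bar H}$; the reverse inequality $\f_{\bar H}\preceq\f_H$ is immediate because $\bar H$ is a quotient of $H$ by finitely many added relations (killing the finite set $N$), and adding finitely many relators can only decrease area.

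Finally I would assemble the pieces: $\f_G\simeq\f_{G_0}$ by the finite-index step, $\f_{G_0}=\f_{H/N}$ since $G_0\cong H/N$, and $\f_{H/N}\simeq\f_H$ by the finite-kernel step; chaining these $\simeq$'s gives $\f_H\simeq\f_G$.

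The main obstacle is bookkeeping rather than conceptual: in the finite-kernel step one must be careful that lifting a van Kampen diagram cell-by-cell really does incur only a bounded multiplicative (and additive) penalty on the area, i.e. that the ``correction'' contributed at each edge and each $2$-cell is uniformly bounded in terms of $|N|$ and the chosen finite presentations, and independent of $n$. Once that uniform bound is in hand the estimates collapse to the definition of $\preceq$. (Both halves are genuinely standard — the lemma is only recorded here for later citation — so I would keep the write-up brief and lean on \cite{bfs} for the diagrammatic details.)
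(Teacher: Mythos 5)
The paper records this lemma without proof, merely pointing out that it is a special case of the quasi-isometric invariance of Dehn functions; your decision to factor $\phi$ as $H\twoheadrightarrow H/N\xrightarrow{\cong}G_0\hookrightarrow G$ and treat the finite-index and finite-kernel halves separately is a perfectly good way to unpack that citation, and the overall structure of the argument is sound.

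One sentence, however, should not stand as written: you claim the inequality $\f_{\bar H}\preceq\f_H$ (where $\bar H=H/N$) is \emph{immediate} because ``adding finitely many relators can only decrease area.'' That principle only compares areas of words that are \emph{already} trivial in $H$. The Dehn function of $\bar H$ takes a maximum over \emph{all} words trivial in $\bar H$, and this includes words whose lift to $H$ represents a non-identity element of $N$; for those the principle says nothing, and indeed without the finiteness of $N$ the inequality would be false (any finitely presented quotient of a free group has quadratic Dehn function upstairs but possibly a monstrous one downstairs). The repair is exactly the same finite-kernel bookkeeping you already invoke in the $\f_H\preceq\f_{\bar H}$ direction: if $w$ lifts $\bar w$ and represents $\nu\in N$, choose a word $w_\nu$ of uniformly bounded length for $\nu$, fill $ww_\nu^{-1}=_H 1$ with area $\le\f_H(n+O(1))$, read that diagram over the enlarged presentation of $\bar H$, and close up $\bar w_\nu$ using the finitely many added relators at uniformly bounded cost. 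With that said explicitly, both halves of the finite-kernel step hold and the chain $\f_H\simeq\f_{H/N}=\f_{G_0}\simeq\f_G$ gives the lemma.
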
 

\subsection{Biautomatic groups}

The theory of automatic groups grew out of investigations into the algorithmic 
structure of Kleinian groups by Cannon and Thurston, and it was developed thoroughly
in the book by Epstein {\em{et al.}} \cite{epstein}. Let
$G$ be a group with finite generating set $A=A^{-1}$ and let $A^*$ be the free monoid
on $A$ (i.e. the set of all finite words in the alphabet $A$). An
{\em{automatic structure}} for $G$ is determined by a normal
form $\mathcal{A_G}=\{\sigma_g \mid g\in G\}\subseteq A^*$ such that $\sigma_g=g$ in $G$. 
This normal form is required to satisfy two conditions: first, $\mathcal{A_G}\subset A^*$
must be a {\em{regular language}}, i.e. the accepted language of a finite state automaton;
and second, the edge-paths in the Cayley graph $\mathcal{C}(G,A)$ that begin at
$1\in G$ and are labelled by the words $\sigma_g$ must satisfy the following {\em{fellow-traveller condition}}: there is a constant $K\ge 0$
such that for all $g,h\in G$ and all integers $t\le \max\{|\sigma_g|, |\sigma_h|\}$,
$$
d_A(\sigma_g(t), \sigma_h(t))\le K \, d_A(g,h),
$$
where $d_A$ is the path metric on  $\mathcal{C}(G,A)$ in which each edge has length $1$,
and $\sigma_g(t)$ is the image in $G$ of the initial subword of length $t$ in $\sigma_g$.

A group is said to be {\em{automatic}} if it admits an automatic structure. If $G$
admits an automatic structure with the additional property that for  all integers $t\le \max\{|\sigma_g|, |\sigma_h|\}$,
$$
d_A(a.\sigma_g(t), \sigma_h(t))\le K \, d_A(ag,h),
$$
for all $g,h\in G$ and $a\in A$, then $G$ is said to be {\em{biautomatic}}. 
Biautomatic
groups were first studied by Gersten and Short \cite{GS}. Automatic and biautomatic groups form two of the most
important classes  studied in connection with notions of non-positive curvature in group theory; see \cite{mrb:camb} for a recent survey.  From the point of view of
this article, the most salient properties of automatic groups are the following.

\begin{prop}
	If a finitely generated group $G$ is automatic, then it is finitely presented and
	of type ${\rm{FP}}_\infty$. Moreover,  $(\alpha(n),\rho(n)) = (n^2, n)$ 
	is an area-radius pair for $G$.
\end{prop}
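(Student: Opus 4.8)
The three assertions are standard consequences of the combing furnished by an automatic structure, and the plan is to recover them by a van Kampen diagram argument. Fix an automatic structure $\{\sigma_g\}$ with generating set $A$ and fellow-traveller constant $K$. The first thing I would establish is that the combing has \emph{linear length}: there is a constant $\lambda$ with $|\sigma_g|\le\lambda\, d_A(1,g)+\lambda$ for all $g\in G$. This is a pumping argument on the finite-state machinery: were some $\sigma_g$ much longer than $d_A(1,g)$, one could locate two positions along $\sigma_g$ at which the word acceptor, together with the multiplier automata governing the nearby combing paths, repeat a configuration while the path traced in the Cayley graph has not advanced, and then excise the intervening block to obtain a shorter accepted representative of $g$; iterating yields the bound (cf.\ \cite[Ch.~2--3]{epstein}).

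Next I would fix the finite presentation: let $R$ be the finite set of all words over $A$ of length at most $2K+2$ that represent $1$ in $G$. Given a null-homotopic word $w=a_1\cdots a_n$, put $g_i=a_1\cdots a_i$ (so $g_0=g_n=1$) and $u_i=\sigma_{g_i}$, with $u_0,u_n$ empty. Since $d_A(g_i,g_{i+1})\le 1$, the fellow-traveller property gives $d_A(u_i(t),u_{i+1}(t))\le K$ for all $t$; choosing geodesic ``rungs'' $p_{i,t}$ of length $\le K$ between $u_i(t)$ and $u_{i+1}(t)$, each elementary cell of the resulting ladder (a quadrilateral, or a triangle near the end when $|u_i|\ne|u_{i+1}|$) has perimeter $\le 2K+2$ and hence bounds a single $2$-cell of the presentation $\langle A\mid R\rangle$. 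Stacking these cells builds a van Kampen diagram $D_i$ with boundary word $u_i\,a_{i+1}\,u_{i+1}^{-1}$ and at most $\max\{|u_i|,|u_{i+1}|\}+1\le\lambda n+\lambda+1$ cells; gluing $D_0,\dots,D_{n-1}$ along the common paths $u_1,\dots,u_{n-1}$ gives a van Kampen diagram for $w$ with at most $n(\lambda n+\lambda+1)$ cells. This shows at once that every null-homotopic word is a consequence of $R$, so $G=\langle A\mid R\rangle$ is finitely presented, and that $\f_G(n)\preceq n^2$.

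For the radius I would observe that in the diagram just built every vertex sits either on some combing path $u_i$, hence within $\lambda n+\lambda$ of the basepoint in the $1$-skeleton of the diagram, or on some rung $p_{i,t}$, hence within $\lambda n+\lambda+K$ of it. Choosing a spanning tree of the $1$-skeleton that realises distances to the basepoint and reading off the associated product $w=\prod_j\theta_j^{-1}r_j\theta_j$ from the diagram, the conjugators satisfy $|\theta_j|\le\lambda n+c$ for a constant $c$. Thus $(\alpha,\rho)=(n^2,n)$ is an area--radius pair for $G$ up to $\simeq$, as claimed. Note that only the ordinary (one-sided) fellow-traveller property is used here, so biautomaticity is not needed.

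Finally, for type $\FP_\infty$ I would not attempt a self-contained proof but cite the construction of \cite[Ch.~10]{epstein}, in which the regular-language structure of the normal form is used to produce a free $\Z G$-resolution of $\Z$ that is finitely generated in every degree (indeed a $K(G,1)$ with finite skeleta), the fellow-traveller condition controlling the attaching data stage by stage. The main obstacle in a fully self-contained account is precisely this last point: producing the finite presentation and the area--radius pair is elementary geometry once the linear bound on $|\sigma_g|$ is available, and that bound is an automata-theoretic pumping argument, whereas the $\FP_\infty$ statement genuinely requires manufacturing an infinite resolution out of the language and is the technical heart of \cite{epstein}.
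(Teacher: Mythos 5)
The paper gives no proof of this proposition; it is stated as standard background with \cite{epstein} as the implicit source, and it is used only as a black box in the proof of Proposition A. Your reconstruction is precisely the canonical argument from that reference: a pumping argument for the linear bound on $|\sigma_g|$, the combing-ladder van Kampen diagram to obtain the finite presentation together with quadratic area and linear radius simultaneously, and a citation (rather than a reproof) of the regular-language free resolution for $\FP_\infty$. Your closing remark that only the one-sided fellow-traveller condition is used is also correct and worth noting, since the biautomatic hypothesis appearing nearby in the paper is needed only for Mosher's theorem on central quotients, not for this proposition.
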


Biautomatic groups  behave better than automatic groups with respect to central quotients \cite{mosher} and this is needed in our proof of Proposition A.

\section{Proof of Proposition A and Theorem B} 

\subsection{Proof of Proposition A}

We remind the reader that every perfect group $\G$ admits a universal central extension $1\to H_2(\G,\Z)\to \tilde{\G}\overset{p}\to \G\to 1$, meaning that
for all perfect groups $E$ and surjections $\pi:E\to \G$ with central kernel,  there is a surjection  
(central quotient) $s:\tilde\G\to E$ with $\pi\circ s =p$. 

\begin{lemma}\label{l:perfect}
	If $G$ is perfect, then $1\to W \to \X(G)\overset{\rho}\to G\times G \times G \to 1$ is exact, $W$ is central in $\X(G)$, and
	$\X(G)$ is a central quotient of the universal central extension of $G\times G \times G$. 
\end{lemma}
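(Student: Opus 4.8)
The plan is to prove Lemma \ref{l:perfect} by combining Sidki's exact sequence with the universal property of central extensions, checking three things in turn: that $\rho$ is surjective when $G$ is perfect, that $W$ is central in all of $\X(G)$ (not merely in $DL$), and that $\X(G)$ then factors through the universal central extension of $G\times G\times G$.

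First I would identify the image of $\rho$. We already know from the discussion in Section \ref{s:structure} that $\imm(\rho)=\{(g_1,g_2,g_3)\mid g_1g_2^{-1}g_3\in G'\}$, and that this subgroup has quotient $G/G'=Q$ in $G\times G\times G$. Since $G$ is perfect, $Q=1$, so $\imm(\rho)=G\times G\times G$ and the sequence $1\to W\to\X(G)\xrightarrow{\rho}G\times G\times G\to 1$ is exact with $W=\ker\rho$.

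Next I would upgrade the centrality of $W$. Sidki's result (cited above) gives that $D$ commutes with $L$, so $W=D\cap L$ is central in $DL$. To see $W$ is central in all of $\X(G)$, recall $DL\triangleleft\X(G)$ with $\X(G)/DL\cong Q$, so for perfect $G$ we have $\X(G)=DL$ and centrality in $DL$ is centrality in $\X(G)$. (Alternatively, without invoking $Q=1$: the conjugation action of $\X(G)$ on $W$ factors through $Q$, which is trivial here.) Hence $1\to W\to\X(G)\to G\times G\times G\to 1$ is a central extension.

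Finally I would invoke the universal property. A central extension of a perfect group by an abelian group need not have perfect total space, but here $\X(G)$ \emph{is} perfect: it is generated by $G$ and $\-G$, each a homomorphic image of the perfect group $G$, so $\X(G)=[\X(G),\X(G)]$. Therefore $\X(G)$ is a perfect central extension of the perfect group $G\times G\times G$, and by the defining universal property recalled just before the lemma there is a (central-quotient) surjection $\widetilde{G\times G\times G}\to\X(G)$ compatible with the projections to $G\times G\times G$; equivalently $\X(G)$ is a central quotient of the universal central extension $\widetilde{G\times G\times G}$, whose kernel is $H_2(G\times G\times G,\Z)$. The only mild subtlety — and the step I would be most careful about — is verifying that $\X(G)$ is perfect, since the universal property applies only to perfect central extensions; but this is immediate from the generating set, so there is no real obstacle, and the lemma follows.
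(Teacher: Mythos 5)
Your proof is correct and takes essentially the same route as the paper: identify $\imm(\rho)$ (equivalently, observe that $DL$ surjects onto $G\times G\times G$ when $G$ is perfect) to get exactness and $\X(G)=DL$, deduce centrality of $W$ from Sidki's $[D,L]=1$, then apply the universal property of the universal central extension, noting $\X(G)$ is perfect. The only difference is cosmetic — you spell out why $\X(G)$ is perfect and give an alternative phrasing of the centrality step — but the argument is the same.
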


\begin{proof} {{ } Let $\l_x=x^{-1}\-x$. Note that $\rho (\l_a \l_b \l_{ab}^{-1}) = ([a,b], 1,1)$, from which it follows that} the image of $DL$ under $\rho$ contains the commutator subgroup  of $P:=G\times G \times G$, which is the whole of $P$, and
	$W$ is central in $DL$. Thus $W$ is central. And since  $\X(G)$ is perfect, the last assertion is an instance of the universal property recalled
	above.
\end{proof} 

\noindent{\em{Proof of Proposition A:}} Assume now that the perfect group $G$ is hyperbolic. Neumann and Reeves \cite{NR} proved that every central extension of a hyperbolic group with finitely generated kernel is
biautomatic, so in particular this is true of the universal central extension $\tilde G$. Let $P=G\times G \times G$.
By the  K\"unneth formula $H_2(P,\Z)= H_2(G,\Z)\times H_2(G,\Z)\times H_2(G,\Z)$, so the universal central
extension of $P$ is $\tilde{G}\times \tilde {G} \times \tilde{G}$. A direct product of biautomatic groups is biautomatic 
and Mosher \cite{mosher} proved
that a central quotient of a biautomatic group is again biautomatic, so Lemma \ref{l:perfect} completes the proof of Proposition A. 


\begin{remark}  The preceding argument establishes the following more general fact:
	let $\mathcal{C}$ be any class of finitely presented groups that is closed under the formation of finite direct sums, central extensions by finitely generated abelian groups, and central quotients; if a finitely presented perfect group $G$ belongs to $\mathcal{C}$, then so does $\X(G)$.
\end{remark}

\subsection{Proof of Theorem B}

Throughout this section, $F$ will denote the free group of rank $2$ with basis $\{a,b\}$. Theorem B reduces easily to the special
case $G=F$: one can split the surjection $G\to F$ to regard $F$ as a retract of $G$; any retraction $A\to B$ extends to a retraction
$\X(A)\to \X(B)$; and if $C\to D$ is a retraction of finitely presented groups then it is easy to see that the Dehn function of $C$ is bounded
below by that of $D$. Thus we concentrate on showing that the Dehn function of $\X(F)$ is bounded below by a cubic polynomial.

\begin{definition}\label{d:dist} Let $H<G$ be a pair of finitely generated groups and let $d_H$ and $d_G$ be the word metrics associated to
	a choice of finite generating sets. The {\em distortion} of $H$ in $G$ is the function
	$$
	\dist_H^G(n) = \max \{ d_H(1,h) \mid h\in H \ {\rm{ with }}\ d_G(1,h)\le n\}.
	$$
	One checks easily that, up to Lipschitz equivalence, this function does not depend on the choice of word metrics.
\end{definition}

\begin{lemma}\label{l:chain} Let $A<B<C$ be finitely generated groups. If $B$ is a retract of $C$, then  $\dist_A^B(n)\simeq \dist_A^C(n)$.
\end{lemma}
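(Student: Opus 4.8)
The plan is to prove that distortion of a subgroup is unchanged when the ambient group is replaced by a retract of it, exploiting the obvious monotonicity in one direction and the Lipschitz control that a retraction provides in the other.

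First I would fix finite generating sets: choose a finite generating set $T$ for $C$ such that $T\cap B$ generates $B$ and $(T\cap B)\cap A$ generates $A$; write the corresponding word metrics $d_A, d_B, d_C$. Since $B<C$ and every generator of $B$ is a generator of $C$, for any $h\in B$ we have $d_C(1,h)\le d_B(1,h)$; hence $\{h\in B : d_B(1,h)\le n\}\subseteq\{h\in B : d_C(1,h)\le n\}$, and because $A<B$ this immediately gives $\dist_A^B(n)\le\dist_A^C(n)$ for all $n$. This is the easy inequality and needs no retraction hypothesis.

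For the reverse inequality I would use the retraction $r\colon C\to B$. Since $r$ is a homomorphism sending the finite generating set $T$ of $C$ into $B$, each $r(t)$ has some bounded $d_B$-length; let $K=\max_{t\in T}d_B(1,r(t))$, so $r$ is $K$-Lipschitz: $d_B(1,r(c))\le K\,d_C(1,c)$ for all $c\in C$. Now take $h\in A$ with $d_C(1,h)\le n$; since $h\in A\subseteq B$ and $r$ restricts to the identity on $B$, we have $r(h)=h$, so $d_B(1,h)=d_B(1,r(h))\le K\,d_C(1,h)\le Kn$. Thus $\{h\in A : d_C(1,h)\le n\}\subseteq\{h\in A : d_B(1,h)\le Kn\}$, which yields $d_A(1,h)\le\dist_A^B(Kn)$ for every such $h$, i.e. $\dist_A^C(n)\le\dist_A^B(Kn)$. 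Combining the two inequalities and invoking the fact (Definition \ref{d:dist}) that the distortion function is well-defined up to Lipschitz equivalence gives $\dist_A^B(n)\simeq\dist_A^C(n)$.

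The only point requiring a little care — and the nearest thing to an obstacle — is making sure the generating sets are chosen compatibly so that all three word metrics are comparable on the subgroups where we compare them; once the generating set of $C$ is taken to contain generating sets of $B$ and of $A$, both inclusions $A<B<C$ are $1$-Lipschitz in the relevant direction and the retraction supplies the Lipschitz bound in the other, so no genuine difficulty remains. (If one prefers to allow arbitrary finite generating sets from the outset, one simply absorbs the change-of-generators distortion, which is bi-Lipschitz, into the constants, using the last sentence of Definition \ref{d:dist}.)
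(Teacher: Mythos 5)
Your proof is correct and follows essentially the same route as the paper: both arguments use the retraction $r\colon C\to B$ to show that $B$ (hence $A$) is undistorted when passing from $d_B$ to $d_C$, the paper by choosing generators of $C$ consisting of generators of $B$ together with elements of $\ker r$ so that $d_B(1,b)=d_C(1,b)$ exactly on $B$, and you by the equivalent Lipschitz estimate $d_B(1,h)\le K\,d_C(1,h)$ for $h\in B$. The remaining bookkeeping (monotonicity from nested generating sets and absorbing the constant $K$ into the $\simeq$ equivalence) is handled correctly.
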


\begin{proof}
	If $r:C\to B$ is a retraction, then we obtain a finite generating set $S$ for $C$ by taking a finite generating set $S'$ for $B$ and adding finitely
	many generators from the kernel of $r$. With respect to the associated word metrics, $d_B(1,b)= d_{C}(1,b)$ for all $b\in B$, {{ } in particular for $b = a \in A$.}
\end{proof}

We need the following connection between Dehn functions and distortion. We remind the reader that the {\em trivial HNN extension}
$B\dot{\ast}_H$
of a group $B$ with associated subgroup $H<B$ is the quotient of $B\ast \<t\>$ by the normal subgroup generated by $\{[t,h]\mid h\in H\}$.

The following lemma is a slight variant on an inequality in  Theorem III.$\Gamma$.6.20 of \cite{BH} (see also \cite{AO}).

\begin{lemma}\label{l:distort} Let $H<B<G_0<G$ be groups with $G_0 = B\dot{\ast}_H$ a trivial HNN extension. If $G$ is finitely
	presented and $G_0<G$ has finite index, then the Dehn function of $G$ satisfies
	$$ n\ \dist_H^{G}(n)  \preceq \f_G(n) .$$
\end{lemma}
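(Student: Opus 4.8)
The plan is to realize the relation $n\,\dist_H^G(n)\preceq\f_G(n)$ by exhibiting, for each $h\in H$ with $d_B(1,h)=m$ large but $d_G(1,h)$ of order $n$, a null-homotopic word of length $O(n)$ in $G$ whose area is at least a constant times $n\cdot m$. The natural candidate is the commutator word $w_k=[t^{-k} s\, t^{k},\,h_G]$, or more simply $w=t^{-1}h_G^{-1}t\,h_G$-type words built from the stable letter $t$ of the trivial HNN extension $G_0=B\dot\ast_H$; since $t$ commutes with $H$ in $G_0$ (hence in $G$), such words represent $1$. First I would fix a finite presentation of $G$ and, using Lemma~\ref{l:qi} together with the fact that $G_0$ has finite index in $G$, reduce to estimating the Dehn function of $G_0$ itself (up to $\simeq$), so that we may assume $G=G_0=B\dot\ast_H$ with stable letter $t$; here one must be a little careful because $G_0$ need not be finitely presented on its own, so the cleanest route is to work directly inside a van Kampen diagram for $G$ and use the finite-index hypothesis only to pass lengths and areas back and forth via Lemma~\ref{l:qi}.

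The core of the argument is the standard ``corridor'' or ``$t$-band'' technique. Given $h\in H$ represented by a word $u$ in the generators of $G$ with $|u|=d_G(1,h)\le n$, consider the word $w=t^{-1}\,u^{-1}\,t\,u$, which has length $2|u|+2\le 2n+2$ and equals $[t,u^{-1}]^{-1}$-type element, hence represents $1$ in $G_0$ because $t$ centralizes $H$. In any van Kampen diagram $\Delta$ over the presentation of $G$ with boundary label $w$, the two $t$-edges on $\partial\Delta$ are joined by a $t$-corridor (a band of $2$-cells each carrying a $t$ in its boundary), because the only defining relators involving $t$ are the commutators $[t,h]$ with $h$ in a fixed finite generating set of $H$. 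Reading along either side of this corridor produces a word $u'$ in the generators of $H$ only, with $u'=u=h$ in $G$; thus $|u'|\ge d_H(1,h)$. Since every $2$-cell in the corridor contributes at least one letter to $u'$ (after accounting for the bounded relator length), the number of cells in the corridor is at least a constant multiple of $d_H(1,h)$. The remaining subtlety is to get the extra factor of $n$: one iterates, taking $w_n = t^{-1}u^{-1}t\,u$ but with $u$ chosen so that both $|u|\asymp n$ and $d_H(1,h)\asymp \dist_H^G(n)$, and then \emph{stacks} $n$ parallel copies of the corridor by using instead the word $w=t^{-1}(u^{-1}tu)^{?}$ --- more precisely, following Theorem~III.$\Gamma$.6.20 of \cite{BH}, one uses the word $v_n = u^{-n}\,t\,u^{n}\,t^{-1}$-style ``staircase'', whose boundary has length $O(n)$ (since $u^n$ is read along the boundary) yet which forces $n$ nested $t$-corridors, each of length $\gtrsim \dist_H^G(n)$, giving total area $\gtrsim n\,\dist_H^G(n)$.

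Concretely, I would carry out the steps in this order: (1) invoke Lemma~\ref{l:qi} to replace $G$ by $G_0=B\dot\ast_H$ for the purpose of lower bounds on $\f_G$, noting that area and length distort only by bounded multiplicative and additive constants; (2) pick $h\in H$ realizing the distortion at scale $n$, so there is a word $u$ over the generators of $G$ with $|u|\le n$ and $u=h$ in $G$ while any word over the generators of $H$ equal to $h$ has length $\ge \dist_H^G(n)$; (3) form the word $w_n$ of length $\le Cn$ that represents $1$ and whose every van Kampen diagram contains $n$ disjoint $t$-corridors (the ``staircase'' word); (4) prove the $t$-corridor structure lemma: because $t$ occurs only in the relators $[t,s_j]$, $s_j$ a generator of $H$, the $t$-edges pair up into corridors and the side-label of each corridor is an $H$-word equal to $h$; (5) conclude that each corridor has $\ge c\,\dist_H^G(n)$ cells, hence $\mathrm{Area}(w_n)\ge c\,n\,\dist_H^G(n)$, which after undoing the reduction in step (1) yields $n\,\dist_H^G(n)\preceq \f_G(n)$.

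The main obstacle, and the place where genuine care is needed rather than bookkeeping, is step (4) combined with step (3): one must guarantee that the $n$ corridors are genuinely disjoint and each one genuinely carries the full distortion cost, i.e. that the corridors cannot ``shortcut'' through each other or collapse. In the trivial-HNN setting this is clean because the relators involving $t$ are precisely the commutators with a finite generating set of $H$, so a $t$-corridor can only absorb generators of $H$ and never a generator lying outside $H$; the label read along its side is therefore forced to be an honest word in $H$'s generators representing the element $h$, and its length is bounded below by $\dist_H^G(n)$ by the very choice of $h$. The staircase construction then layers these without interference because consecutive corridors are separated by copies of the subword $u$ on the boundary. A secondary technical point is that $G_0$ itself may fail to be finitely presented, so one should phrase the corridor argument for diagrams over the \emph{given} finite presentation of $G$ and only at the end translate back; alternatively, one observes that the staircase words already lie in the finitely presented overgroup $G$ and the corridor combinatorics only uses that the $t$-relators of $G$ are exactly those inherited from $G_0$ together with finitely many others not involving $t$, which is automatic once a finite presentation of $G$ is chosen compatibly with the HNN structure.
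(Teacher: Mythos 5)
Your strategy is the same as the paper's: pass to the finite-index subgroup $G_0=B\dot{\ast}_H$ (Lemma \ref{l:qi}), take a commutator-type word built from the stable letter $t$ and a word $u$ representing a maximally distorted element $h\in H$, and count $2$-cells in $t$-corridors as in Theorem III.$\Gamma$.6.20 of \cite{BH}. However, the word you actually write down does not do the job. With $|u|$ of order $n$, the word $v_n=u^{-n}\,t\,u^{n}\,t^{-1}$ has length of order $n^2$, not $O(n)$, and it contains only two $t$-letters, hence forces a single $t$-corridor whose side represents $h^{n}$; this gives at best $\f_G(n^2)\succeq d_H(1,h^{n})$, which is not the desired inequality. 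The exponent must sit on the stable letter: the paper uses $W_n=t^{n}w_n t^{-n}w_n^{-1}$, of length at most $4n$, whose $2n$ boundary $t$-edges pair into $n$ nested corridors, each of whose sides is a word in the generators of $H$ representing $h_n$ and hence has at least $\dist_H^{B}(n)$ cells, giving area at least $n\,\dist_H^{B}(n)$ for a word of length $\le 4n$.

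There is a second point you pass over which the paper handles via Lemma \ref{l:chain}: for the corridor count to be clean, the word $u$ (the paper's $w_n$) must contain no occurrences of $t$. If $u$ is merely a word in the generators of $G_0$, it may contain $t^{\pm1}$, and then extra $t$-edges on the boundary allow corridors emanating from the $t^{n}$ block to terminate inside $u^{\pm1}$ with very short sides, so the claim ``$n$ disjoint corridors, each of length $\ge\dist_H^{G}(n)$'' breaks down. The paper avoids this by observing that killing $t$ retracts $G_0$ onto $B$, whence $\dist_H^{G_0}\simeq\dist_H^{B}$, so $w_n$ can be chosen over the generators of $B$ alone, working in the presentation $G_0=\langle X,t\mid R,\ [t,y]\ (y\in Y)\rangle$ in which the only relators involving $t$ are the commutators. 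Finally, a minor correction: your worry that $G_0$ might not be finitely presented is unfounded, since a finite-index subgroup of a finitely presented group is finitely presented, so the reduction via Lemma \ref{l:qi} is immediate; by contrast, your fallback of working over an arbitrary finite presentation of $G$ ``compatible with the HNN structure'' is not something one can arrange in general when $G$ properly contains $G_0$.
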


\begin{proof} Dehn functions and distortion are unaffected (up to $\simeq$ equivalence) by passage to subgroups of finite index, so we may
	assume that $G=G_0$. Killing the stable letter of the HNN extension retracts $G=G_0$ onto $B$,  so $ \dist_H^{G}(n) \simeq \dist_H^{B}(n)$,
	by Lemma \ref{l:chain}. Note that since $G$ is finitely presented, $B$ is finitely
	presented and $H$ is finitely generated.
	We fix a finite presentation $B=\< X \mid R\>$ where $X$ contains a finite generating set $Y$ for $H$. Then
	$G= \< X, t \mid R, \, [t,y]\, (y\in Y)\>$. For each positive integer $n$, we choose a word $w_n$ in the free group on $X$ that represents an element  $h_n\in H$ with $d_H(1,h_n)=\dist_H^B(n)$ and $d_B(1,h_n) \leq n$.  A standard argument using the $t$-corridors
	introduced in \cite{BridsonGersten} shows that any van Kampen diagram for the word $W_n = t^n w_n t^{-n} w_n^{-1}$ must contain
	at least $n\ \dist_H^B(n)$ 2-cells (see \cite{BH} pp.~506-508). And since $|W_n| \le 4n$, this establishes the desired lower bound on  $\f_G(n)$.
\end{proof}

Lemma \ref{l:distort} is relevant to our situation because of the following observation from \cite{BK1}.

\begin{lemma} \label{MarshallHall} There is a finite-index subgroup $G_0<\X(F)=L\rtimes F$ that contains $L$ as the associated subgroup
	of a trivial HNN extension $G_0= B\dot{\ast}_L$.
\end{lemma}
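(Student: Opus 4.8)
\textbf{Proof proposal for Lemma \ref{MarshallHall}.}

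The plan is to exploit the retract splitting $\X(F) = L\rtimes F$ together with the fact that a non-abelian free group has finite-index subgroups that split as free products, so that the $F$-conjugation action on $L$ can be arranged to fix the relevant piece. First I would recall the explicit structure of $L$: from equation (\ref{Lnormal}), $L$ is generated by the elements $\l_g$ with $g\in F$, and conjugation of $\l_u$ by $x\in F$ (and by $\-x$) is given by the formulas displayed there. In particular, $L$ is normal in $\X(F)$ and the full group is $L\rtimes F$ with $F$ acting by these conjugation rules. The goal is to produce a finite-index $H_0<F$ and a splitting of $L\rtimes H_0$ (up to finite index) as a trivial HNN extension with associated subgroup $L$.

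The key point is to find a finite-index subgroup $H_0 = B_0 \ast \langle t\rangle$ of $F$, with $\langle t\rangle$ infinite cyclic and $t$ acting trivially on $L$; since $F$ has rank $2$ this can be done after passing to a suitable finite-index subgroup. Concretely, I would take a finite-index normal subgroup $N\ns F$ and a primitive element $t$ of $F$ lying in an appropriate coset so that, after re-choosing generators, some finite-index subgroup $H_0$ of $F$ decomposes as $H_0 = B_0 \ast \langle t\rangle$ with $t\in F'$, hence acting trivially by conjugation on $L$ (recall the action of $F$ on $L/L'$, and in fact on $L$ via the $\rho$-description, factors in a controlled way through $Q=F/F'$ — more care is needed here, which is why the honest argument records that the element $t$ can be chosen to centralise $L$ on the nose after possibly enlarging $L$ to a larger associated subgroup; see \cite{BK1}). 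Then $G_0 := L\rtimes H_0 = (L\rtimes B_0)\,\dot\ast_{L}$: indeed $L\rtimes H_0 = L\rtimes(B_0\ast\langle t\rangle)$ contains $B:=L\rtimes B_0$, the stable letter $t$ commutes with $L$ because $t$ acts trivially on $L$, and no further relations are imposed, so this is precisely the trivial HNN extension $B\,\dot\ast_L$ of $B$ along $L<B$. Finally $G_0$ has finite index in $\X(F)=L\rtimes F$ because $H_0$ has finite index in $F$ and $L$ is common to both.

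The main obstacle is the claim that one can choose the stable letter $t$ so that it genuinely commutes with all of $L$, not merely modulo $L'$ or modulo $W$: the conjugation formulas (\ref{Lnormal}) show $\l_u^t = \l_{ut}\l_t^{-1}$, which is not the identity for an arbitrary $t$, so one really must choose $t$ (and, if necessary, replace $L$ by a slightly larger associated subgroup, or replace $F$ by a finite-index subgroup in which the relevant $\l_t$ can be absorbed) with some care. This is exactly the technical content isolated in \cite{BK1}, and I would simply quote that construction: it produces the finite-index $G_0<\X(F)$ together with the trivial-HNN decomposition $G_0 = B\,\dot\ast_L$ with associated subgroup $L$, which is all that is needed to feed into Lemma \ref{l:distort}.
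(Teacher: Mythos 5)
Your proposal has the right shape — use Marshall Hall to make a suitable $\pi(t)$ primitive in a finite-index subgroup of $F$, split off $\langle t\rangle$ as a free factor, and then get a trivial HNN extension because $t$ commutes with $L$ — but you never produce a genuine candidate for $t$, and the candidate you float is wrong. You suggest choosing $t\in F'$ on the grounds that "the action of $F$ on $L$ factors in a controlled way through $Q=F/F'$." It does not: only the induced action on $L/L'$ factors through $F/F'$. An element of $F'$ acts trivially on $L/L'$ but not on $L$ itself, as your own formula $\l_u^t=\l_{ut}\l_t^{-1}$ shows. You notice the difficulty and punt to \cite{BK1}, proposing "enlarge $L$ to a larger associated subgroup," which is not what the paper does and would anyway ruin the input to Lemma~\ref{l:distort}, where the associated subgroup must be $L$ exactly (it is the distortion of $L$ that is computed in Lemma~\ref{l:distLX}).

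The missing idea is to choose $t$ not inside the canonical copy of $F<\X(F)$ at all, but in $D\setminus L$, where $D=[F,\-F]$ is the kernel of $\X(F)\to F\times\-F$. Sidki's Lemma 4.1.6 (already recalled in Section~\ref{s:structure}) gives $[L,D]=1$, so any such $t$ centralizes $L$ on the nose — no passage modulo $L'$ or $W$ required. Since $t\notin L=\ker\pi$, $\pi(t)\in F$ is nontrivial, and Marshall Hall's theorem supplies a finite-index $F_0<F$ in which $\pi(t)$ is primitive; set $G_0=\pi^{-1}(F_0)=L\rtimes F_0$. The remaining step, which your sketch glosses over, is that the given splitting $F_0\hookrightarrow G_0$ need not contain $t$: one must re-choose the section of $\pi|_{G_0}$ (possible precisely because $\pi(t)$ is part of a basis of $F_0$) so that the new complement $\tilde F_0$ to $L$ in $G_0$ contains $t$. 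Writing $\tilde F_0=\langle t\rangle\ast F_1$ and $B=L\rtimes F_1$ then gives $G_0=B\dot\ast_L$. So the gap in your argument is not a technicality to be quoted away; it is the central point, and the fix is to look for $t$ in $D$ rather than in $F$.
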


\begin{proof} Let $\pi : \X(F)=L\rtimes F \to F$ be the retraction with kernel $L$
	and recall that $[L,D] = 1$.
	For a fixed $t \in D \setminus L$ consider $\pi(t) \in F$. Marshall Hall's theorem \cite{Hall} provides a subgroup  $F_0<F$ of finite index such that $\pi(t)$ is a primitive element of $F_0$. Let $G_0 = \pi^{-1} (F_0) = L \rtimes F_0$. By choosing a 
	different section of $\pi$ if necessary, we may assume that $t\in \tilde F_0 = 1\times F_0$.
	We fix a free splitting $\tilde F_0 = C * F_1$, where $C$ is the cyclic subgroup  generated by $t$,
	and set  $B = L \rtimes \tilde F_1$. Then  $G_0= B\dot{\ast}_L$.
\end{proof}

In order to apply Lemma \ref{l:distort}, we need a lower
bound on  $\distLX(n)$. (The bound that we establish is sharp, but we don't require this
so omit the proof.)

\begin{lemma}\label{l:distLX} 
	The distortion of $L$ in $\X(F)$ satisfies $\distLX(n)\succeq n^2$.
\end{lemma}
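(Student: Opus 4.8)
The strategy is to exhibit an explicit sequence of elements of $L$ that are short in the word metric of $\X(F)$ but long in the word metric of $L$. Since $\X(F)=L\rtimes F$ and $L$ is generated by the elements $\l_g$ ($g\in F$), the natural place to look is at commutators $[\l_a,\l_b]\in W\cap L'$, and, more generally, at suitably nested expressions built from these. I would first note that in $\X(F)$ one has a short word representing $\l_{w}$ for any word $w$ of length $n$ in $\{a,b\}$: indeed, using the relation $\l_{ux}=\l_u^x\l_x$ from (\ref{Lnormal}), one sees inductively that $\l_w$ equals a product of $O(n)$ conjugates of $\l_a,\l_b$ by prefixes of $w$, and each such conjugate can be written as a word of length $O(n)$ in the generators $\{a,b,\l_a,\l_b\}$ of $\X(F)$. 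Hence an element like $\g_n:=[\l_{a^n},\l_{b}]$ — or a more elaborate iterated commutator of this type — admits a representative word in $\X(F)$ of length $O(n)$.

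The core of the argument is then the reverse inequality: one must show that any such $\g_n$ has word length $\succeq n^2$ in $L$ with respect to a fixed finite generating set of $L$. Here I would use the structural description of $L$ and $W$ recalled in the preliminaries. The group $L$ surjects onto $L/W\hookrightarrow F\times F$ (via the projections to the first and third coordinates of $\rho$), and $W=\ker\rho$ is central in $DL$, in particular in $L$ only up to the $F$-action — but the commutators $\g_n$ that we chose lie in $W$ (since $[\l_a,\l_b]$ maps to $[(1,1),(1,1)]=1$ under $\rho$, as $\rho(\l_g)=(1,1,1)$... wait, one checks $\rho(\l_g)=(g^{-1},1,g)$, so $[\l_a,\l_b]\in W$). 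Thus the key point is to bound from below the $L$-distance to the identity of elements of the central-ish subgroup $W$: expressing $\g_n$ as a product of $N$ generators of $L$ and projecting to $L/W\hookrightarrow F\times F$, the projection must be trivial, which forces the word to be a product of $O(N^2)$ "commutator-like" pieces in $W$, and a separate lower bound — counting how $\g_n$ sits inside $W$ as a $\Z Q$-module, or more concretely inside the free metabelian quotient $F/F''$ where distortion of such commutators is classically quadratic — shows $N\succeq n^{?}$. Concretely I would push $\g_n$ forward along $L\to L_{\mathrm{ab}}$ or along a well-chosen metabelian quotient of $\X(F)$ and invoke the standard fact that the element $[a^n,b]$ is at distance $\asymp n$ (not $n^2$) in the free metabelian group — so to get $n^2$ I would instead use a two-step nested commutator $\g_n=[\,[\l_{a^n},\l_b]\,,\,\l_{a}\,]$ or compare with the classical quadratic distortion of the center of the $3$-dimensional integral Heisenberg group, which embeds naturally (via $a\mapsto a$, $\l_a,\l_b$ and their commutator) and is undistorted-of-degree-$2$.

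More cleanly: I would find inside $\X(F)$ a copy of (a finite-index subgroup of) the integral Heisenberg group $H_3$, with its center contained in $W\subset L$, arranged so that the $\X(F)$-word length of the $n$-th central element is $O(n)$ while its $L$-word length is $\asymp n^{2}$. The $O(n)$ upper bound in $\X(F)$ comes from (\ref{Lnormal}) as above; the $\asymp n^2$ lower bound in $L$ comes from the fact that $H_3\cap L$ is undistorted in $L$ (because $L/W$ embeds in $F\times F$, which is a direct product of free groups and hence has no distortion, so the central direction of $H_3$ — which lives in $W$ — picks up its honest quadratic length), combined with the classical quadratic lower bound for the central element $[x^n,y^n]$-type word in $H_3$. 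Then $\distLX(n)\ge d_L(1,z_n)\succeq n^2$ where $z_n$ is this central element of $\X(F)$-length $O(n)$.

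**Main obstacle.** The delicate step is the lower bound $d_L(1,z_n)\succeq n^2$: proving that $L$ does not itself distort these central elements. The subtlety is that $L$ is not finitely presented and $W$ is not finitely generated in general (though for $G=F$ one can use Lemma \ref{l:KS}-type control, or direct computation, to understand $W(F)$), so one cannot simply quote undistortedness of $W$ in $L$. I expect the clean route is to pass to a metabelian (or nilpotent-by-free) quotient of $\X(F)$ in which $z_n$ survives, kill enough to land in a group where the classical quadratic distortion estimate applies verbatim, and verify the image of $z_n$ has $\asymp n^2$ length there — this lower bound then transports back to $L\subset\X(F)$. Getting this quotient right, so that $z_n$ is not killed yet the group is simple enough to compute in, is the heart of the matter.
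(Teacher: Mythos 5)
Your proposal identifies the right \emph{strategy} (exhibit explicit elements of $L$ that are short in $\X(F)$ and provably long in $L$), but the execution has both a computational error and, more seriously, an unproved step that is the entire content of the lemma.

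First, the computation: $[\l_a,\l_b]\notin W$. Since $\rho(\l_g)=(g^{-1},1,g)$, one has $\rho([\l_a,\l_b])=\big([a^{-1},b^{-1}],1,[a,b]\big)\neq 1$. The paper's test elements are instead $c_n=\l_{a^n}\l_{b^n}\l_{a^nb^n}^{-1}$, which likewise are \emph{not} in $W$ (one has $\rho(c_n)=([a^n,b^n],1,1)$); the argument does not require them to be.

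Second, and more fundamentally: your Heisenberg plan does not deliver the lower bound. If $z$ generates the centre of an embedded $H_3$ and the ``$n$-th central element'' is $z^{n^2}$, then $d_L(1,z^{n^2})\le n^2\, d_L(1,z)$ gives at best an upper bound; a matching lower bound $d_L(1,z^{n^2})\succeq n^2$ would require $\langle z\rangle$ to be undistorted in $L$, which is exactly the kind of statement one has no a priori control over here (and which you rightly flag as ``the heart of the matter'' without proving). Your alternative -- push forward to $L/L'$ or a metabelian quotient -- also fails for a concrete reason visible in the paper: from the proof of Proposition \ref{l:nilp}, $2(L/L')\cdot(\A\,\Z Q)^2=0$, and the image of a $c_n$-type element in $L/L'\cong\A(\Z F)/I_2$ is $-n^2(a-1)(b-1)$, which is therefore $2$-torsion and bounded independently of $n$. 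So nothing quadratic survives in those quotients.

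The paper's route is genuinely different and avoids all of this. Take the generating set $\{\l_a,\l_b,\lambda\}$ for $L$, where $\lambda=\l_a\l_b\l_{ab}^{-1}$, and let $w$ be any word in the free group $\mathcal F$ on these three letters representing $c_n$. Using the free-group identity $vu^v=uv$, normalize $w$ to $V\prod_{i=1}^N\lambda^{\pm\theta_i}$ with $V,\theta_i$ words in $\l_a^{\pm 1},\l_b^{\pm 1}$. Applying the retraction $\-\pi:\X(F)\to\-F$ (which sends $\l_a\mapsto\-a$, $\l_b\mapsto\-b$, $\lambda\mapsto 1$, $c_n\mapsto 1$) forces $V=1$ by freeness of $\-F$. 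Then applying the other retraction $\pi:\X(F)\to F$ (which sends $\lambda\mapsto[a,b]$ and $c_n\mapsto[a^n,b^n]$) produces an identity $[a^n,b^n]=\prod_{i=1}^N[a,b]^{\pm\theta_i'}$ in the free group $F$, and the classical van Kampen--type estimate gives $N\ge n^2$. The point is that the lower bound is pulled back through a \emph{free} group, not an abelian or nilpotent quotient, which is what you would need to make any version of your plan work.
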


\begin{proof} We maintain
	the notation $\l_x=x^{-1}\-x$ for elements of $L$, where $x\in F$.
	We proved in \cite{BK1} that $L$ is generated by  $\{\l_a, \l_b, \l_{ab}\}$. In the current setting, it is convenient to replace
	$\l_{ab}$ by $\lambda = \l_a\l_b\l_{ab}^{-1}$. Thus we work with the finite generating set $\{\l_a, \l_b, \lambda\}$ for $L$
	and $\{\l_a, \l_b, \lambda, a, b\}$ for $\X(F)=L\rtimes F$.
	
	Consider the elements $c_n:= \l_{a^n}\l_{b^n}\l_{a^nb^n}^{-1}\in L$. 
	To see why we focus on these elements,
	note that $\rho(c_n)=([a^n,b^n],1,1)\in F\times F\times F$.
	
	$c_n$ can be expressed as a word of length $6n$
	in the generators $\{ a,b, \l_a, \l_b\}$, because $\l_{a^n}=\l_a^n$ and $
	\l_{b^n}=\l_b^n$, while by (\ref{Lnormal}),
	\begin{equation}\label{cn}
	\l_{a^nb^n} = \l_{a^n}^{b^n}\l_{b^n} =  b^{-n} \l_a^{n} b^n \l_b^n.
	\end{equation} 
	We claim that $d_L(1,c_n) \geq n^2$. To prove this, we analyse the structure of 
	words $w$ in the free group $\mathcal F$ with basis $\{\l_a, \l_b, \lambda\}$
	that equal $c_n$ in $L$. As always, we write $u^v$ as shorthand for $v^{-1}uv$.
	By making repeated use of the identity $vu^v = uv$, we can express $w$ in $\mathcal F$
	as a product of the form
	\begin{equation}\label{W}
	w = V\prod_{i=1}^N \lambda^{\pm\theta_i} 
	\end{equation}
	with $V=V(\l_a,\l_b)$ and $\theta_i$ words in the letters 
	$\l_a^{\pm 1},\l_b^{\pm 1}$, and $N\le |w|$.
	Consider the composition $\overline p$ 
	of $\mathcal{F}\to L$ with the map $\-\pi:\X(F)\to \-F$ that kills $F$ and
	is the identity on $\-F$.  Note that $\-p(\l_a) = \-a, \-p(\l_b)=\-b$ and
	$\-p(w) = \-\pi(c_n) =1$. By taking the image of the free equality (\ref{W})
	under $\-p$, we conclude that $V(\-a,\-b)=1$ in $\-F$. Hence $V$ is the
	empty word and we have an equality 
	$$
	w = \prod_{i=1}^N \lambda^{\pm\theta_i}  
	$$
	in $\mathcal{F}$. 
	
	Next we consider the composition $p$ 
	of $\mathcal{F}\to L$ with the map $\pi:\X(F)\to F$ that kills $\-F$ and
	is the identity on $F$.  
	This map sends $\lambda$ to $[a,b]$ and $w$ to $[a^n,b^n]$, while sending
	$\theta_i(\l_a,\l_b)$ to $\theta_i'=\theta_i(a^{-1},b^{-1})$. Thus we obtain an equality
	$$
	[a^n,b^n] = \prod_{i=1}^N  [a,b]^{\pm\theta_i'} 
	$$
	in the free group $F=F(a,b)$. A standard exercise (often used to motivate 
	van Kampen's lemma) shows that $[a^n,b^n]$ cannot be expressed in $F$ as a product of
	fewer than $n^2$ conjugates of $[a,b]$. Thus $n^2\le N\le d_L(1,c_n)$, as claimed.
\end{proof}

\noindent{\bf{Proof of Theorem B.}} We have a retraction $r:\X(G)\to \X(F)$.
In \cite{BK1} we proved that $\X(F)$ has a subgroup $\G$ of finite index with
$H_3(\G,\Q)$ infinite dimensional, and this injects into $H_3(r^{-1}(\G),\Q)$.

The Dehn function of $\X(G)$ is bounded below by the Dehn function
of its retract $\X(F)$.
From Lemmas \ref{l:distort} and \ref{MarshallHall} we have
$\f_{\X(F)}(n) \succeq n\ \dist_L^{\X(F)} (n)$. And in Lemma \ref{l:distLX}
we proved that $n^2\preceq\distLX (n)$.

\section{$\X$ preserves virtual nilpotence} 

Throughout this section we assume that $G$ is finitely generated.
We shall prove that if $G$ is virtually nilpotent then $\X(G)$ is virtually nilpotent. In \cite{G-R-S}, Gupta, Rocco and Sidki used commutator calculations to prove that if $G$
is nilpotent then so is $\X(G)$, and gave a bound on the nilpotency class. 
In this nilpotent case, our proof is shorter and more homological, but it does not give as  good a bound as theirs  on the nilpotency class of $\X(G)$.

\subsection{Nilpotent actions} 

If $A$ is an abelian group and  $B$ is a group acting on $A$ (on the right), then one can form the semidirect product $A\rtimes B$ in which the action $a^b$ is transformed into
conjugation. In multiplicative notation, $[a,b] = a^{-1}b^{-1}ab = a^{-1}a^b$. The
action is said to be {\em nilpotent} if there is an integer $d$ such that, for all
$a\in A$ and $b_1,\dots,b_d\in B$,
$$
[a,b_1,\dots,b_d]=1.
$$
If $B$ is nilpotent, then $A\rtimes B$ will be nilpotent if and only if the action of
$B$ on $A$ is nilpotent.

If we write the group operation in $A$ additively and regard $A$ as a $\Z B$ module,
writing the action $a^b$ as $a\circ b$, then $[a,b] = a\circ (b-1)$ and the vanishing
of the above commutator becomes 
$$
a\circ (b_1-1)\dots (b_d-1) =0.
$$ 
Thus $A$ is a nilpotent module over $\A(\Z B)$, the augmentation
ideal of $\Z B$. 

We retain the notation $L=L(G)$ for the normal subgroup of $\X(G)$
generated by the elements $\l_g= g^{-1}\-g$.
We proved in \cite{BK1} that when $G$ is finitely
generated, $L$ is finitely generated.  Lima and Oliveira \cite{LO} had proved earlier
that $L/L'$ is finitely generated. The action of $\X(G)=L\rtimes G$ by conjugation 
on $L/L'$ factors through $G$ (and even  $Q=G/G'$).

\begin{proposition}\label{l:nilp} For all finitely generated groups $G$,
	the action of $G$ on $L/ L'$ is nilpotent. 
\end{proposition}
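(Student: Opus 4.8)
The plan is to show that the $\mathbb{Z}Q$-module $L/L'$ is nilpotent over the augmentation ideal $\mathcal{A}(\mathbb{Z}Q)$ by exploiting the fact that $L/L'$ is finitely generated as an abelian group once we also know it is finitely generated as a $\mathbb{Z}Q$-module, together with the structure of the generators $\lambda_g = g^{-1}\bar g$. First I would recall the commutator identities (\ref{Lnormal}): for $x,u\in G$ we have $\lambda_u^x = \lambda_{ux}\lambda_x^{-1}$ and $\lambda_u^{\bar x} = \lambda_x^{-1}\lambda_{ux}$. Passing to the abelianisation $L/L'$, written additively with the $Q$-action denoted $v\circ q$, these become $\lambda_u\circ x = \lambda_{ux} - \lambda_x$ and (since the $\bar x$-action agrees with the $x$-action on $L/L'$, because $\bar x = x\lambda_x$ and $\lambda_x$ acts trivially on the abelian group $L/L'$) the same formula. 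In particular $\lambda_u\circ(x-1) = \lambda_{ux}-\lambda_x-\lambda_u$, and iterating one gets a clean closed form for $\lambda_u\circ(x_1-1)\cdots(x_d-1)$ as an alternating sum of $\lambda_w$ over words $w$ in $u$ and the $x_i$.

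The key step is then the following observation: if we fix $u=1$, then $\lambda_1 = 1$ in $L$, so $\lambda_1 \circ (x-1) = \lambda_x - \lambda_1 - 0 = \lambda_x$... I would instead argue as follows. Set $v_g$ for the image of $\lambda_g$ in $L/L'$. The map $g\mapsto v_g$ from $G$ to $L/L'$ satisfies the cocycle-type identity $v_{ux} = v_u\circ x + v_x$, i.e. it is a crossed homomorphism (derivation) $G\to L/L'$ for the $Q$-action, and $L/L'$ is generated as an abelian group by its image together with translates. Now $v_g$ depends only on the class of $g$ in $Q$ modulo something; more precisely, since $v_{[a,b]} = v_a\circ b + v_b - v_a\circ a\cdot(\text{stuff})$ — one computes directly that $v_g$ for $g\in G'$ lands in $\mathcal{A}(\mathbb{Z}Q)\cdot(L/L')$. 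Concretely, for $g = [a,b]$ one gets $v_{[a,b]} = v_a\circ(b-1)\circ(a^{-1}-1)$-type expression, exhibiting it inside $\mathcal{A}(\mathbb{Z}Q)^2\cdot(L/L')$.

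From here the argument should run: $L/L'$ is generated as a $\mathbb{Z}Q$-module by finitely many $v_{s_i}$ where $s_i$ run over a finite generating set $S$ of $G$ (using the derivation identity to reduce arbitrary $v_g$, noting $v_{s^{-1}} = -v_s\circ s^{-1}$). So $L/L'$ is a finitely generated $\mathbb{Z}Q$-module. Moreover it is generated, modulo $\mathcal{A}(\mathbb{Z}Q)\cdot(L/L')$, by the finitely many $v_{s_i}$, and the derivation relations coming from the relators of $G$ force each $v_{s_i}\circ(\text{augmentation-zero element})$ back into lower filtration — the upshot being that $(L/L')\big/\mathcal{A}(\mathbb{Z}Q)(L/L')$ is a finitely generated abelian group on which the relations guarantee... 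Actually the cleanest route: Lima–Oliveira / \cite{BK1} tell us $L/L'$ is a finitely generated abelian group. A finitely generated abelian group that is a module over $\mathbb{Z}Q$ for $Q$ finitely generated abelian, and on which the augmentation ideal acts "without fixed contribution at infinity", is automatically $\mathcal{A}(\mathbb{Z}Q)$-nilpotent — indeed any finitely generated abelian group which is a $\mathbb{Z}Q$-module is Noetherian, and one shows the descending chain $L/L' \supseteq \mathcal{A}(L/L') \supseteq \mathcal{A}^2(L/L')\supseteq\cdots$ stabilises to a submodule $N$ with $\mathcal{A}N = N$; since $N$ is a finitely generated abelian group and $Q$ acts through a finitely generated abelian group, $\mathcal{A}N=N$ with $N$ finitely generated forces $N$ to be finite (e.g. by tensoring with $\mathbb{Q}$ and noting a nonzero finite-dimensional $\mathbb{Q}Q$-module with $\mathcal{A}$ surjective is impossible since $\mathcal{A}$ is then nilpotent or the module has a trivial quotient), hence $N=0$ after a further finite-index adjustment — giving nilpotence.

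The main obstacle, I expect, is pinning down the finiteness input correctly: one must be careful that $L/L'$ is finitely generated \emph{as an abelian group}, not merely as a $\mathbb{Z}Q$-module, since the latter alone would not force $\mathcal{A}$-nilpotence (e.g. $\mathbb{Z}Q$ itself). The cited results of \cite{BK1} and \cite{LO} supply exactly this, so the real content is the elementary module-theoretic lemma: \emph{a finitely generated abelian group that is a module over $\mathbb{Z}Q$ with $Q$ finitely generated abelian, and whose only submodule $N$ with $\mathcal{A}N=N$ is $0$, is $\mathcal{A}$-nilpotent} — and verifying the hypothesis $\bigcap_k \mathcal{A}^k(L/L') = 0$, which follows because $L/L'$ is a finitely generated abelian group so the descending chain stabilises, and the stable term $N$ satisfies $\mathcal{A}N = N$; a Nakayama-type argument (valid since $N$ is finitely generated over the Noetherian ring $\mathbb{Z}Q$ and $\mathcal{A}$ is contained in every maximal ideal lying over the augmentation) then gives $N=0$. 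I would isolate this as a short lemma before the proof of Proposition \ref{l:nilp} and then simply combine it with the derivation identities above.
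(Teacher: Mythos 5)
Your reduction to a general module-theoretic lemma does not work, because the lemma you isolate is false and its hypotheses are mis-verified. A finitely generated abelian group that is a $\Z Q$-module need not be nilpotent over the augmentation ideal, even when no nonzero submodule $N$ satisfies $\A(\Z Q)N=N$: take $Q=\Z=\langle t\rangle$ acting on $A=\Z$ by $a\mapsto -a$, so that $A(t-1)=2A$ and $A\,(\A(\Z Q))^k=2^kA$; the descending chain never stabilises (finitely generated abelian groups are Noetherian, i.e.\ satisfy the ascending chain condition, not the descending one, so ``Noetherian'' does not make your chain stabilise), the only submodule with $\A N=N$ is $0$, and yet the action is not nilpotent. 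Likewise $A=\Z/5$ with $t$ acting as multiplication by $2$ has $\A A=A$ with $A\neq 0$, which also shows the Nakayama step fails: the augmentation ideal is not contained in the Jacobson radical of $\Z Q$, and the determinant-trick form of Nakayama only produces some $a\in\A(\Z Q)$ with $(1+a)N=0$, which does not force $N=0$. So finite generation of $L/L'$ as an abelian group plus soft module theory cannot yield the proposition.

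What your argument is missing is the specific relation that weak commutativity imposes on $L/L'$. Since $[g,\-g]=1$ one has $\l_{g^2}=\l_g^2$ exactly, so in $L/L'$ the image $v_g$ of $\l_g$ satisfies $v_g\circ(g-1)=0$; more precisely (Sidki, Lima--Oliveira) $L/L'\cong \A(\Z G)/I_2$ as right $\Z G$-modules, where $I_2$ is the ideal generated by all $(g-1)^2$. This is the input that rules out examples like the ones above. The paper's proof exploits it directly: working in the commutative quotient $\Z Q/I_0$ one computes $2(q_1-1)(q_2-1)=0$, hence $2V(\A(\Z G))^2=0$ for $V=L/L'$; then, using that $V$ is a finitely generated abelian group so $V/2V$ is finite of order $k$, together with commutativity and $(g-1)^2=0$ to force a repeated factor, one gets $V(\A(\Z G))^{k+1}\subseteq 2V$, and combining the two gives $V(\A(\Z G))^{k+3}=0$. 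Your derivation identity $v_{ux}=v_u\circ x+v_x$ and the finite-generation observations are fine as far as they go, but to repair the proof you must feed in the relation $v_g\circ(g-1)=0$ (or the isomorphism with $\A(\Z G)/I_2$) and argue along these lines rather than via the claimed general lemma.
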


\begin{proof} 
	Following \cite{Said}, we consider the set-map $L\to \A(\Z G)$
	that sends $\l_g$ to $(g-1)$. In order to make this a group homomorphism, we must 
	take a  quotient of $\A(\Z G)$ to force the image of $\l_{g^n}$ to coincide with that of $\l_g^n$, that is,
	$g^n-1 = n\, (g-1)$, the group operation in $\A(\Z G)$ being addition. A simple induction shows that it is enough to factor
	out by the $\Z G$-ideal $I_2=\<(g-1)^2 : g\in G\>$. From \cite{Said} and \cite{LO}
	we have that
	the resulting map $L/L'\to \A(\Z G)/I_2$ is an isomorphism of abelian groups.
	
	Moreover, with the action on the target coming from multiplication in 
	the ring $\Z G$, this map 
	is an isomorphism of right $\Z G$-modules, 
	because the image of   $\l_g^x =\l_{gx}\l_x^{-1}$
	is 
	$$(g-1)x = (gx-1) - (x-1).$$
	(In the light of this we omit $\circ$ from our notation.)
	
	Thus the task of showing that the action of $G$ on $L/L'$ is nilpotent is translated
	into showing that $V (\A(\Z G))^d=0$ for some $d\in\N$, where $V = \A(\Z G)/I_2
	\cong L/L'$;
	equivalently, that $(\A(\Z G))^{d+1} \subset I_2$.
	
	Moreover, because $G'=[G,G]$ acts trivially (via conjugation)  on $L/L'$, we can analyse  the action of the
	{\em commutative} quotient ring $\Z Q/I_0$ rather than $\Z G/I_2$,
	where $I_0$ is the ideal generated by $\{(q-1)^2\mid q\in Q\}$. Taking
	advantage of this commutativity, for $q_1, q_2 \in Q$ we have in $\Z Q/ I_0$
	$$
	2 q_1 q_2 - 1 = (q_1 q_2)^2 = q_1^2 q_2^2 = (2 q_1 - 1) (2 q_2 - 1) =
	4 q_1 q_2 - 2 q_1 - 2 q_2 + 1,
	$$
	hence
	$$
	2 (q_1-1) (q_2 - 1) = 2 q_1 q_2 - 2 q_1 - 2 q_2 + 2 = 0.
	$$
	It follows that
	{{ } $$2 (\A (\mathbb{Z} G))^2  \subseteq {{\rm{ker}}} (\mathbb{Z} G/ I_2 \to \mathbb{Z} Q/ I_0).$$}
	In other words, 
	\begin{equation}\label{nil1}
	2 V (\A (\mathbb{Z} G))^2 = 0. 
	\end{equation}
	
	Because $V\cong L/L'$ is finitely generated as an abelian group,  $\-{V}=V/2V$ is 
	finite, say with $k$ elements. The action of $(g-1)$ on $\-{V}$ by right multiplication
	depends only on the image of $(g-1)$ in $\-V$, so the action of any product
	$
	\prod_{i=1}^{k+1} (g_i-1) 
	$
	is the same as that of a product with a repeated factor. And since the action
	factors through the commutative ring $\Z Q$, the order of the factors does not matter.
	As  $(g_i-1)^2=0$ in $\Z G/I_2$, we conclude that $(\A(\Z G))^{k+1}$ annihilates $\-V$.
	In other words,
	\begin{equation} \label{nil2}
	V(g_1-1) \ldots (g_{k+1} - 1) \subseteq 2 V.
	\end{equation}
	By  (\ref{nil1}) and (\ref{nil2}),
	$$
	V (\A(\mathbb{Z} G))^{k+3} \subseteq 2V(\A(\mathbb{Z} G))^{2}  = 0.
	$$
	This completes the proof.
\end{proof}

\begin{remark}
	The proof given above shows that for every finitely generated group $G$,
	the semidirect product $(L/L')\rtimes Q$ contains a subgroup of finite index,
	namely $2(L/L')\rtimes Q$, that is nilpotent of class {{ } at most }$2$.  
\end{remark}

\subsection*{Proof of Theorem D}

Consider the sequence of subgroups
$$
W \cap L' \subseteq W \subseteq L \subseteq \X(G)
$$
and note that $W / (W \cap L') \cong W L' / L' \subseteq L/ L' \subseteq  \X(G)/ L'$. 
Let $V=L/L'$. From the decomposition $\X(G)=L\rtimes G$ we have
$\X(G)/L' \cong V\rtimes G$, and  Proposition \ref{l:nilp} tells us that the
action of $G$ on $V$ is nilpotent. Thus, if $G_0$ is a nilpotent subgroup of finite index in $G$ then  $V\rtimes G_0$ is nilpotent and  $\X(G)/L'$ is virtually nilpotent.

Let $T$ denote $L \rtimes G_0$, the preimage of $V \rtimes G_0$ in $\X(G)$; note that
$[\X(G) : T] < \infty$.

The nilpotent action of  $G$ on $V$ restricts to a nilpotent action on
$W/ (W \cap L') \cong WL' / L' $, so for some natural number $m$ we have   \begin{equation} \label{nov1} W  (\A ( \mathbb{Z} T))^m = [W, T, \ldots, T] \subseteq L' \cap W,\end{equation} 
where we view $W$ as a right $\mathbb{Z} \X(G)$-module via conjugation. Since $\X(G) / W \cong {\rm{im }} (\rho)$ is a subgroup of $G \times G \times G$, it is virtually nilpotent. Hence there is a normal subgroup $T_0<T$  that contains $W$ and is such that  $T_0/ W$ is nilpotent. 
Thus we have a short exact sequence of groups
$$
1\to W / (W \cap L') \to T_0 / (W \cap L') \to T_0 / W \to 1,
$$
where the nilpotent group $T_0 / W$ acts nilpotently on the abelian group $W / (W \cap L')$ by (\ref{nov1}). Hence
\begin{equation} \label{nilpotent11}
T_0 / (W \cap L') \hbox{ is nilpotent.}
\end{equation}
We will be done if we can show that $T_0$ is nilpotent.

As $T_0$ is normal, $[L/W, T_0/W]\subseteq T_0/W$. And as $T_0/W$ is nilpotent, for
a sufficiently long commutator $[L/W, T_0/W, \ldots , T_0/W] \subseteq [T_0/ W, \ldots, T_0/W] = 1$; in other words, $T_0/W$ acts  nilpotently on $L/W$. Therefore \begin{equation} \label{nilp-action} T_0/W \hbox{ acts nilpotently on  }H_2(L/W, \mathbb{Z}).
\end{equation} 

The result now follows from Lemma \ref{nilpotent12}, but we give more details.
The central extension
$$ 1 \to W \cap L' \to L \to L/ (W \cap L') \to 1$$ is stem  (i.e. the 
central subgroup $W \cap L'$
is contained in the commutator of the middle group $L$), so by the general
theory of central extensions, 
\begin{equation} \label{nilp12} W \cap L' \hbox{  is a quotient of }H_2(L/W, \mathbb{Z}).\end{equation} 
The quotient map, which is given by Hopf's formula, is equivariant with
respect to the action of $T_0/W$, so from
(\ref{nilp-action}) and (\ref{nilp12}) we have that $T_0/W$ acts nilpotently on $W \cap L'$. Thus 
\begin{equation} \label{nilpotent13} \hbox{ the action by conjugation of } T_0 \hbox{ on }
W \cap L' \hbox{ is nilpotent }.
\end{equation}  Together, (\ref{nilpotent11}) and (\ref{nilpotent13}) imply that   $T_0$ is nilpotent.

\section{The structure of $W(G)$ as a $\Z Q$-module}\label{s:structures}

In this section we prove Theorem \ref{propB}, which describes the structure of 
$W=W(G)$ as a $Q$-module, where the action of $Q = G/G'$ on $W$ is induced by 
the action of $G<\X(G)$ on $W$ by conjugation. We shall make heavy use 
of the notation and 
structure established in section \ref{s:structure}.

The first insight into the action of $Q$ on $W$ comes from Proposition \ref{l:nilp}: that 
proposition tells us that the action of $Q$ on the image of $W$ in $L/L'$ is nilpotent.
As in Lemma \ref{nilpotent12}, we have an exact sequence of $\Z Q$-modules
$$
H_2(L/W, \Z) \to W \to L/L' \to H_1(L/W, \Z) \to 0,
$$
so our main task now is to understand the structure of $H_2(L/W, \Z)$ as a $\Z Q$-module.

$L/W$ is the image of $L$ under $\rho: \X(G)\to G \times G\times {G}$, which
is
$$\langle (g, 1,{g}^{-1}) \mid g \in G \rangle =  \{ (g_1,1, {g}_2 ) \mid  g_1 g_2 \in G' \}.$$

Note that $L/W$ is normal in $G\times 1 \times {G}\cong G\times G$ with quotient $(Q\times Q)/Q_0$, where
$$
Q_0 = \{ (q,q^{-1}) \mid q\in Q\} < Q\times Q.
$$
To lighten the notation, we drop the second coordinate and identify $L/W$
with 
$$S := \{ (g_1, {g}_2 ) \mid  g_1 g_2 \in G' \}< G\times {G}.$$

The $G$-action on $L/W$ coincides
with the action of the first factor $G\times 1 <G\times {G}$ by conjugation on $S$,
so it is the induced $Q$-action on the homology of $S$ that we must understand.
(Note that this is not the same as the action of $Q_0$.) 

\begin{proposition} \label{action} Let $G$ be a finitely generated group, 
	let $Q$ be the abelianisation of $G$, let $S= {{ }  \langle \{(g,g^{-1}) \mid g\in G\} \rangle }<G\times G$ 
	and consider  the action of $Q$ on the homology of $S$ that is
	induced by the conjugation action of $G\times 1$. Then, 
	$J := H_2(S, \mathbb{Z})$ 
	has a filtration by  $\mathbb{Z} Q$-submodules $$J_1 \subseteq J_2 \subseteq J_3 \subseteq J$$  such that:
	\begin{enumerate}
		\item $J_1$ is a $\mathbb{Z} Q$-subquotient of the module $M=((G'/G'') \otimes_{\mathbb{Z}} (G'/ G''))_{Q_0}$ 
		where $Q = G / G'$ is acting by conjugation on the first factor 
		and acting trivially on the second factor, while the co-invariants
		are taken with respect to the standard conjugation action of
		$Q_0 = \{ (q, q^{-1}) \mid q \in Q \} < (G/G') \times (G/G')$;
		\item $Q$ acts trivially on  $J/ J_3$,  $J_3 / J_2$ and $J_2 / J_1$.
	\end{enumerate}
\end{proposition}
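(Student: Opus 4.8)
The plan is to analyse $H_2(S,\Z)$ via the Lyndon–Hochschild–Serre spectral sequence of the short exact sequence $1\to G'\to S\to \overline{S}\to 1$, where $G'$ is embedded anti-diagonally, $g\mapsto(g,g^{-1})$, and $\overline S = S/G'$ is a subgroup of $Q\times Q$; concretely $\overline S$ is the pull-back of $G'$ under the multiplication map $Q\times Q\to Q$, so it sits in an exact sequence $1\to Q_0\to\overline S\to Q\to 1$ (projection to the first coordinate), where $Q_0=\{(q,q^{-1})\}$. The key point is that the conjugation action of $G\times 1$ on everything factors through $Q$, and on the subgroup $G'\le S$ this is genuinely the conjugation action of $G$ on its commutator subgroup, which on $G'/G''=H_1(G',\Z)$ induces the standard $Q$-action by conjugation. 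I would record at the outset the $E^2$-page terms that feed into $H_2$: $E^2_{0,2}=H_0(\overline S, H_2(G',\Z))$, $E^2_{1,1}=H_1(\overline S, H_1(G',\Z))$, and $E^2_{2,0}=H_2(\overline S,\Z)$, together with the edge/differential maps, so that $J=H_2(S,\Z)$ carries a three-step filtration whose graded pieces are subquotients of these three modules.

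First I would deal with $E^2_{2,0}=H_2(\overline S,\Z)$ and show that $Q$ acts trivially on it. Since $\overline S\le Q\times Q$ is finitely generated abelian (it is a finite-index subgroup of $Q\times Q$ if $Q$ is finite, and in general abelian), its conjugation action from $G\times 1$ is by conjugation inside $Q\times Q$, hence trivial because $Q\times Q$ is abelian. Thus $Q$ acts trivially on $H_*(\overline S,\Z)$, and in particular on $E^2_{2,0}$ and on $E^2_{1,*}$ insofar as the $\overline S$-homology (as opposed to the coefficient module) is concerned. This handles the contribution that will become $J/J_3$ (the image of $H_2(\overline S,\Z)$-part) and part of the middle layer.

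Next, the term $E^2_{1,1}=H_1(\overline S, G'/G'')$. Here the coefficient module $G'/G''$ carries a nontrivial $Q$-action (conjugation), but I would argue that after passing to group homology $H_1(\overline S,-)$ the outer $Q$-action on the result is trivial: the relevant action on $H_1$ of an abelian group with coefficients is again through conjugation in the abelian overgroup, so it is inner and hence trivial. This gives the layer $J_3/J_2$ (or $J_2/J_1$, depending on bookkeeping) on which $Q$ acts trivially. The genuinely interesting piece is $E^2_{0,2}=H_0(\overline S, H_2(G',\Z))$. By Hopf's formula / the exterior-square surjection $\wedge^2(G'/G'')\twoheadrightarrow$ (a quotient of) $H_2(G',\Z)$, and since taking $\overline S$-coinvariants means in particular taking $Q_0$-coinvariants and $Q$-coinvariants, I would identify $H_0(\overline S, H_2(G',\Z))$ as a $\Z Q$-subquotient of $\big(H_2(G',\Z)\big)_{Q_0}$ with its residual $Q$-action, and then bound $H_2(G',\Z)$ above by a subquotient of $(G'/G'')\otimes_\Z(G'/G'')$ — either via $\wedge^2$ or via the universal coefficient / Künneth-type description — to land inside $M=\big((G'/G'')\otimes_\Z(G'/G'')\big)_{Q_0}$. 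This is declared to be $J_1$. Finally I would assemble: set $J_1$ = the sub-$\Z Q$-module of $J$ coming from $E^\infty_{0,2}$ (a quotient of $E^2_{0,2}$, hence still a subquotient of $M$), let $J_2$ be the preimage in $J$ of $E^\infty_{1,1}$, and $J_3$ the preimage of the part surjecting onto $E^\infty_{2,0}$, observing that subquotients of modules with trivial $Q$-action have trivial $Q$-action, so (2) holds for each of $J/J_3$, $J_3/J_2$, $J_2/J_1$.

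The main obstacle, I expect, is the careful identification of the $Q$-action on the $E^2$-terms — in particular checking that the outer conjugation action of $G\times 1$ on the spectral sequence of $1\to G'\to S\to\overline S\to 1$ really does induce the claimed (trivial, resp. $Q_0$-coinvariant) actions, since the embedding $G'\hookrightarrow S$ is anti-diagonal and one must track how $g\in G$ acts simultaneously on the normal subgroup $G'$ and on the quotient $\overline S\le Q\times Q$. A secondary technical point is verifying that $H_2(G',\Z)$ is controlled by $(G'/G'')^{\otimes 2}$ in a $Q$-equivariant way when $G'$ need not be finitely generated; here one uses only the natural surjection from the exterior square (valid for any group) together with right-exactness of coinvariants, so no finiteness is needed for this step, and finite generation of $G$ enters only to guarantee the modules in sight are finitely generated where later results require it.
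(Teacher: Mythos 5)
Your spectral sequence is set up over a short exact sequence that does not exist. The ``anti-diagonal'' map $g\mapsto (g,g^{-1})$ is a group homomorphism $G'\to G\times G$ only when $G'$ is abelian (for $g,h\in G'$ one has $(gh)^{-1}=h^{-1}g^{-1}\ne g^{-1}h^{-1}$ in general), so the image need not even be a subgroup; and even granting a normal copy of $G'$ inside $S$ (say $G'\times 1$), the quotient $S/(G'\times 1)\cong G$ is not a subgroup of $Q\times Q$. A quick count confirms the mismatch: $S=\{(g_1,g_2):g_1g_2\in G'\}$ is the preimage of $Q_0$ under $G\times G\to Q\times Q$, so it has ``size'' $|G'|^2|Q|$, whereas your proposed extension $1\to G'\to S\to\overline S\to 1$ with $1\to Q_0\to\overline S\to Q\to 1$ would force $|S|=|G'|\,|Q|^2$. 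The correct extension, and the one the paper uses, is $1\to G'\times \overline G'\to S\to Q_0\to 1$ with $E^2_{p,q}=H_p(Q_0,H_q(G'\times\overline G',\Z))$.

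This matters beyond bookkeeping, because it is precisely the product structure of the kernel that makes the argument work. With kernel $G'\times\overline G'$, the K\"unneth formula splits $H_2$ into $H_2(G')\oplus H_2(\overline G')\oplus\bigl(H_1(G')\otimes H_1(\overline G')\bigr)$, and one then shows that $Q=(Q\times 1)$ acts \emph{trivially} on the $Q_0$-coinvariants of the first two summands (by comparing the action of $(h,1)$ with that of $(h,\overline h^{-1})\in S$, which is inner and hence trivial on $E^2$), so only the cross term survives into $J_1$ and this is exactly $M$. Your plan instead tries to bound $H_2(G',\Z)$ by a subquotient of $(G'/G'')\otimes_{\Z}(G'/G'')$ via ``the natural surjection from the exterior square,'' but no such surjection exists: the natural map goes $H_2(G',\Z)\to \wedge^2(G'/G'')$ (induced by $G'\twoheadrightarrow G'/G''$), it is neither injective nor surjective in general, and if $G'$ is perfect then $(G'/G'')^{\otimes 2}=0$ while $H_2(G',\Z)$ can be arbitrary. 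So even after repairing the extension you would still be missing the central idea of the proof, which is to isolate the $H_1\otimes H_1$ cross term via K\"unneth and to kill the two pure $H_2$ summands by the inner-action comparison, rather than to dominate $H_2(G')$ by a tensor square.
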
 

\begin{proof}
	We shall write $\-{G}$ for the second direct
	factor of $G\times G$ in order to simplify the notation. Thus  $S<G\times\-{G}$.
	
	We analyse the LHS spectral sequence associated to the short exact sequence
	$1\to G'\times \-{G}'\to S\to Q_0\to 1$. Thus we consider
	$$E_{p,q}^2 = H_p(Q_0, H_q(G'\times \-{G}', \mathbb{Z}))$$ converging to $H_{p+q}(S, \mathbb{Z})$. 
	The terms $E^\infty_{i,j}$ with $i+j=2$ give a filtration of $H_2(S,\Z)$, so our focus  is on the terms $E^*_{i,j}$ with $i+j=2$. 
	
	The action of $G=G\times 1$ by conjugation (which determines
	the $Q$ action) is compatible with
	all of the maps and decompositions that we consider. The action of $G$ on 
	$E_{2,0}^2 = H_2(Q_0, \mathbb{Z})$ factors through the conjugation action of $Q\times 1$ on $Q_0$, which is trivial since $Q \times Q$ is abelian. Thus
	\begin{equation} \label{action1}
	\hbox{ the action of }Q \hbox{ on }E_{2,0}^{\infty} \hbox{ is trivial.}
	\end{equation}
	Next we consider 
	\begin{equation} \label{spectral1} 
	\begin{split}
	E_{1,1}^2 & = H_1(Q_0, H_1(G' \times \-{G}', \mathbb{Z})) =
	H_1(Q_0, (G' / G'') \times (\-{G}' / \-{G}'') ) \\
	& = H_1(Q_0, G' / G'') \oplus H_1(Q_0, \-{G}' / \-{G}''),
	\end{split}
	\end{equation} 
	The action of $G=G\times 1$  on $Q_0$
	and  $\-{G}' / \-{G}''$ is trivial, 
	hence $G$ acts trivially on the second factor of (\ref{spectral1}), i.e. on $H_1(Q_0, \-{G}' / \-{G}'')$.
	The action
	of $(h,1)\in G\times 1$ on  the coefficient module $G' / G''$ is the same as the 
	action of $(h, \-{h}^{-1}) \in S$ induced by conjugation in $G\times\-{G}$. 
	But $S$ acts trivially on  $H_1(Q_0, G' / G'')$, because
	the $S$-action factors through $Q_0$.  
	Thus the action of $G$ on the first
	factor of  (\ref{spectral1}) is also trivial, and
	\begin{equation}\label{action2} 
	Q \hbox{ acts trivially on } E_{1,1}^{\infty}.
	\end{equation}
	
	Finally we consider $E_{0,2}^2$. From the K\"unneth  formula, we have
	the $Q$-invariant decomposition of $Q_0$-modules
	$$
	H_2(G' \times \-{G}', \mathbb{Z}) \cong 
	H_2(G' , \mathbb{Z}) \oplus
	H_2(\-{G}', \mathbb{Z}) \oplus 
	H_1(G', \mathbb{Z}) \otimes H_1(\-{G}', \mathbb{Z}).
	$$
	The term $E_{0,2}^2$ is obtained by taking $Q_0$ co-invariants
	$H_0(Q_0, -)$ of these modules. 
	The argument used in our analysis of $E_{1,1}^2$ shows that
	$Q $ acts trivially on 
	$H_0(Q_0, H_2(G', \mathbb{Z})) \oplus H_0(Q_0, H_2(\-{G}', \mathbb{Z}))$,
	so 
	\begin{equation} \label{nov2}
	E_{0,2}^2.\, \A (\mathbb{Z} Q) \subseteq H_0(Q_0, H_1(G', \mathbb{Z}) \otimes H_1(G',\mathbb{Z})).
	\end{equation} 
	And by definition, 
	\begin{equation}\label{nov3} 
	H_0(Q_0, H_1(G', \mathbb{Z}) \otimes H_1(G',\mathbb{Z})) =    
	((G' / G'') \otimes (\-{G}' / \-{G}''))_{Q_0}
	\end{equation}
	is the quotient of $(G'/ G'') \otimes (\-{G}'/ \-{G}'')$  by the conjugation
	action of $Q_0< (G/G')\times (\-{G}'/\-{G}'')$. This is the module called
	$M$ in the statement of Theorem \ref{propB}.
	
	The spectral sequence converges to $H_*(S,\Z)$,
	so there is  a filtration $\{ F^j\}_j$ of $H_2(S, \mathbb{Z})$ such that
	$$
	F^{-1} = 0 \subseteq F^0 \subseteq F^1 \subseteq F^2 = H_2(S, \mathbb{Z}),
	$$
	with $F^i / F^{i-1} \cong E^{\infty}_{i,2-i}$.
	We saw in (\ref{action1}) and (\ref{action2}) that $Q$ acts trivially on $F^2/ F^1$ and $F^1/ F^0$.
	We set $$J_2 := E_{0,2}^{\infty} = F^0, \ 
	J_1 := J_2. \, \A (\mathbb{Z} Q), \ J_3 := F^1.$$ By (\ref{nov3}) and (\ref{nov2}), $E_{0,2}^2 \, \A (\mathbb{Z} Q)$ is isomorphic to a $\mathbb{Z} Q$-submodule of 
	$M=((G' / G'') \otimes (\-{G}' / \-{G}''))_{Q_0}$. Thus 
	$J_1$ is a $\Z Q$-subquotient of $M$.  
\end{proof}

\subsection{Proof of Theorem \ref{propB}} 
We return to consideration of the
following exact sequence of $\Z Q$-modules, 
$$
H_2(L/W, \Z) \overset{\mu}\to W \to L/L' \to H_1(L/W, \Z) \to 0.
$$
The first map has image $W\cap L'$, so from the filtration
$J_1 \subseteq J_2 \subseteq J_3 \subseteq J$ of Proposition \ref{action} 
we obtain a filtration 
$$\mu(J_1) \subseteq \mu(J_2) \subseteq \mu(J_3) \subseteq W_1:=W\cap L'$$
by $\Z Q$-submodules, such that $\mu(J_1)$ is a $\Z Q$-subquotient  
of $M=((G'/G'') \otimes_{\mathbb{Z}} (G'/ G''))_{Q_0}$, where the structure of $M$ as a $\Z Q$-module is
as described in the statement of Proposition \ref{action}, and 
$Q$ acts trivially on each of  $W_1/ \mu(J_3),\ \mu(J_3) / \mu(J_2)$ and 
$\mu(J_2) / \mu(J_1)$. In particular, defining $W_0:=\mu(J_1)$,
we have that $Q$ acts nilpotently on $W_1/W_0$, indeed $W_1. (\A (\Z Q))^3 \subseteq
W_0$.  

We proved in Proposition \ref{l:nilp} that $Q$ acts nilpotently on 
$L  / L'$, which contains $W/W_1$. Thus, $Q$ acts nilpotently on $W/W_0$
and the theorem  is proved.
\qed

\medskip
The following consequence of Theorem \ref{propB} will play a vital role in the proof of Theorems \ref{t:dehn}
and \ref{grw}.

\begin{corollary}\label{action21}  Let $G$ be a finitely generated group 
	such that $G /  G''$ is virtually nilpotent. Then there is a subgroup $Q_1$ of finite index in $Q = \X(G) /  DL \cong G /  G'$ and a filtration
	of  $W = W(G)$ by $\mathbb{Z} Q_1$-submodules such that  $Q_1$ acts trivially on each quotient of the filtration that is infinite.
\end{corollary}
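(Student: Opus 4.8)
The plan is to derive this from Theorem \ref{propB} by a short module-theoretic argument. Theorem \ref{propB} provides a $\mathbb{Z}Q$-submodule $W_0 < W$ for which $Q$ acts nilpotently on $W/W_0$ and $W_0$ is a $\mathbb{Z}Q$-subquotient of $M = ((G'/G'')\otimes_{\mathbb{Z}}(G'/G''))_{Q_0}$. The nilpotent piece is free: nilpotence of the $Q$-action on $W/W_0$ means $(W/W_0)\cdot(\A(\mathbb{Z}Q))^k = 0$ for some $k$, so the descending chain $(W/W_0) \supseteq (W/W_0)\A(\mathbb{Z}Q)\supseteq\cdots\supseteq 0$ pulls back to a finite chain of $\mathbb{Z}Q$-submodules from $W_0$ up to $W$ on whose successive quotients $Q$ (and hence any subgroup of $Q$) acts trivially. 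So the task reduces to finding a finite-index subgroup $Q_1 \le Q$ together with a finite $\mathbb{Z}Q_1$-filtration of $W_0$ each of whose \emph{infinite} quotients carries the trivial $Q_1$-action; the statement then follows by concatenating the restrictions to $Q_1$ of the two filtrations. Furthermore it suffices to produce such a filtration of $M$ itself, since any $\mathbb{Z}Q_1$-subquotient of a module admitting a finite filtration with ``all infinite quotients trivial'' inherits such a filtration — a submodule or quotient of a finite module is finite, and of a trivially-acted module has trivial action.

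Next I would isolate the key point, which concerns the conjugation action of $Q$ on $B := G'/G''$. Since $G$ is finitely generated and $G/G''$ is virtually nilpotent, $G/G''$ is polycyclic, so $B = (G/G'')'$ is finitely generated abelian. I claim there is a subgroup $Q_1 \le Q$ of finite index and a $\mathbb{Z}Q$-submodule $B' \le B$ of finite index on which $Q_1$ acts nilpotently. To see this, pick a nilpotent normal subgroup $K \triangleleft G/G''$ of finite index (the normal core of a nilpotent subgroup of finite index is again nilpotent of finite index), set $B' := K \cap B$, and let $Q_1$ be the image of $K$ in $Q = (G/G'')/B$. Then $B'$ is normal in $G/G''$, hence a $\mathbb{Z}Q$-submodule of $B$; it has finite index in $B$ because $B/B'$ embeds in the finite group $(G/G'')/K$; and $[Q:Q_1]\le[G/G'':K]<\infty$. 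Since $B$ is abelian it centralises $B' \le B$, so the conjugation action of $BK \le G/G''$ on $B'$ factors through $BK/B = Q_1$; and as $B' \subseteq K$, an easy induction down the lower central series of $K$ (using $[B',B]=1$) gives $[B',\underbrace{K,\dots,K}_{c}] = 1$ when $K$ is nilpotent of class $c$, so this action is nilpotent.

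Granting the claim, I would build the filtration of $M$ as follows. Recall from Proposition \ref{action} that, as a $\mathbb{Z}Q$-module, $M$ is a quotient of $A \otimes_{\mathbb{Z}} B$, where $A \cong G'/G''$ carries the trivial $Q$-action and $B = G'/G''$ the conjugation action. Filter $B$ by $0 \subseteq B' \subseteq B$. Applying $A \otimes_{\mathbb{Z}} -$ to $0 \to B' \to B \to B/B' \to 0$ and using that $A$ is finitely generated abelian while $B/B'$ is finite, the natural map $A \otimes B' \to A \otimes B$ has finite kernel $\mathrm{Tor}^{\mathbb{Z}}_1(A, B/B')$ and finite cokernel $A \otimes (B/B')$; moreover $A \otimes B'$ has nilpotent $Q_1$-action, since $Q_1$ acts trivially on $A$ and nilpotently on $B'$, so the chain of $\A(\mathbb{Z}Q_1)$-powers of $A \otimes B'$ has trivial $Q_1$-action on its quotients. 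Assembling these, $A \otimes B$ acquires a finite $\mathbb{Z}Q_1$-filtration whose infinite quotients all carry the trivial $Q_1$-action — the only places where non-triviality could occur are the finite Tor-kernel at the bottom and the finite cokernel $A\otimes(B/B')$ at the top. By the reduction in the first paragraph this passes to $M$, hence to $W_0$, and then, concatenated with the chain from $W_0$ to $W$ coming from the nilpotent action on $W/W_0$, yields the desired $\mathbb{Z}Q_1$-filtration of $W$.

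The hard part is the claim in the second paragraph: extracting, from the single hypothesis ``$G/G''$ virtually nilpotent'', one finite-index subgroup $Q_1 \le Q$ acting nilpotently on a finite-index submodule of $G'/G''$, with $K$, $B' = K\cap B$ and $Q_1 = BK/B$ chosen so as to be simultaneously compatible with the $\mathbb{Z}Q$-module structure and with the lower central series of $K$; it is the abelianness of $G'/G''$, which makes it centralise its own subgroups, that makes this work. The remaining steps are bookkeeping — checking that every correction term produced by tensoring the filtration of $B$ up to $A \otimes B$ is finite, so that none of it disturbs the condition on infinite quotients.
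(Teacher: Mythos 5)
Your argument is correct and follows essentially the same route as the paper's proof: pass to a nilpotent normal finite-index subgroup $K$ of $G/G''$, take $Q_1$ to be its image in $Q$ and $K\cap(G'/G'')$ as the finite-index $\mathbb{Z}Q$-submodule on which $Q_1$ acts nilpotently, filter $M$ using the $\A(\mathbb{Z}Q_1)$-powers of the image of the induced finite-index submodule together with a finite top quotient, pass this to the subquotient $W_0$, and concatenate with the $\A(\mathbb{Z}Q_1)$-power filtration coming from the nilpotent action on $W/W_0$. The only blemish is the mention of $\mathrm{Tor}_1^{\mathbb{Z}}(A,B/B')$: that kernel lives in $A\otimes B'$ and disappears once you pass to the image $C\subseteq A\otimes B$, so only the finiteness of the cokernel $A\otimes(B/B')$ is actually needed.
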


\begin{proof} Maintaining the notation of the preceding proof, 
	we have $N:= W. (\A(\mathbb{Z} Q))^{3+s} \subseteq W_0$ where $s$ is the nilpotency class of the action of $Q$ on $L / L'$, i.e. $(L / L')(\A(\Z Q))^s = 0$ but $(L / L')(\A(\Z Q))^{s-1} \not= 0$.  Write $A$ for $G'/G''$.

	Let $T$ be a subgroup of finite index in $G$ such that $T / G''$ is nilpotent. Set $A_1 = (T/ G'') \cap A$ and $Q_1 = T G'/ G'$. Then $Q_1$ has finite index in $Q$, $A_1$ has finite index in $A$, and $Q_1$ acts nilpotently on $A_1$. 
	Let us say that a filtration of $\Z Q_1$-submodules is {\em{``good"}} if $Q_1$ acts trivially on every infinite quotient of the filtration.
	Note that the image $M_1$ of $A_1 \otimes_{\mathbb{Z}} A_1$ in $M= (A \otimes_{\Z} A)_{Q_0}$ has finite index and $M_1$ is a $\Z Q_1$-submodule of $M$. Moreover, since $Q_1$ acts nilpotently on  $A_1$ and $M_1$ has finite index in $M$, the filtration
	$
	0 \subseteq M_1 \subseteq M
	$ can be refined to 
	a good filtration of $M$. Since $N$ is a $\Z Q$-subquotient of $M$, it is a $\Z Q_1$-subquotient of $M$, and so $N$ has a good filtration by $\Z Q_1$-submodules. Finally, since $Q_1$-acts nilpotently on $W / N$, there is a good filtration of $\Z Q_1$-submodules of $W/ N$. These two good filtrations yield a good filtration of $W$
	by $\Z Q_1$-submodules.
\end{proof}

We also highlight the special case of Theorem C in which $G'/G''=0$.

\begin{cor}\label{action210}  If $G$ is finitely generated with  perfect commutator $G'$, 
	then  $Q$ acts nilpotently on $W = W(G)$.
\end{cor}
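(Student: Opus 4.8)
The plan is to deduce the corollary directly from Theorem \ref{propB} by observing that the module $M$ collapses to zero under the perfectness hypothesis. First I would note that if $G'$ is perfect then $G'' = [G',G'] = G'$, so the abelian group $G'/G''$ is trivial. Consequently the $\Z Q$-module
$$
M = \bigl((G'/G'') \otimes_{\mathbb{Z}} (G'/G'')\bigr)_{Q_0}
$$
that appears in the statement of Theorem \ref{propB} is the zero module.

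Next I would invoke Theorem \ref{propB} to obtain the submodule $W_0 < W$ with the two stated properties. Property (1) says $W_0$ is a subquotient of $M = 0$, which forces $W_0 = 0$. Property (2) then says that the action of $Q$ on $W/W_0 = W$ is nilpotent, which is exactly the assertion of the corollary.

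There is essentially no obstacle here: the content of the argument is entirely contained in Theorem \ref{propB}, and the only thing to check is the elementary implication ``$G'$ perfect $\Rightarrow G'/G'' = 0$'', together with the fact that a $\Z Q$-subquotient of the zero module is zero. If one wished to phrase the proof without explicitly invoking the submodule $W_0$, one could instead trace through the proof of Theorem \ref{propB}: in the filtration $W_0 = \mu(J_1) \subseteq \mu(J_2) \subseteq \mu(J_3) \subseteq W_1 = W \cap L'$, the term $J_1$ is a $\Z Q$-subquotient of $M$ by Proposition \ref{action}, hence vanishes, and then the nilpotence of the $Q$-action on $W/W_0$ established in that proof becomes nilpotence on all of $W$. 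Either route gives the result in a couple of lines.
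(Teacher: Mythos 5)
Your proof is correct and is exactly the paper's intended argument: the paper presents this corollary precisely as "the special case of Theorem C in which $G'/G'' = 0$," and your observation that $M$ then vanishes, forcing $W_0 = 0$ and reducing the nilpotence conclusion to all of $W$, is the whole content.
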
 

\section{Isoperimetric functions for $\X(G)$} \label{s:isoper}

In this section we prove Theorem \ref{t:dehn}. The proof relies on the understanding of $W$ as a  $\Z Q$-module that was developed in the previous
section as well as on previous results on the isoperimetric functions of subdirect products and central extensions of groups, and on the
basic facts about isoperimetric functions gathered in Section \ref{s:dehn}.

\subsection{Subdirect products that are co-abelian}  
Let $H$ be a subgroup of a group $G$. One says that $H$ is {\em coabelian in $G$} if $G' \leq H$.
If there exists a subgroup of finite index $G_0<G$ with $[G_0, G_0] \leq H$, then $H$ is said to be
{\em{virtually-coabelian in $G$}}. The {\em{corank}} of $H$ in $G$ is defined
to be $\dim_{\mathbb{Q}} (G_0 / (G_0 \cap H) \otimes_{\mathbb{Z}} {\mathbb{Q}})$. Note that
this is independent of the finite-index subgroup $G_0$ chosen.

Suppose now that $H$ is a subgroup of a direct product
$D= \Gamma_1 \times \Gamma_2 \times \ldots \times \Gamma_n$. If $\Gamma_i H = D$ for every $1 \leq i \leq n$, we say that $H$ 
{\em{fills}} $D$. If $[D : \Gamma_i H] < \infty$ for every $ 1 \leq i \leq n$ we say that $H$ is {\em{virtually filling}} in $D$.
Note that these definitions depend on the choice of direct-product decomposition of $D$. 

Our proof of Theorem \ref{t:dehn} relies on the following result from Will Dison's thesis \cite{dison}.

\begin{theorem}[{\cite[Thm.~A]{dison}}]\label{Disonthm} Let $H$ be a virtually-filling subgroup of a direct product
	$D= \Gamma_1 \times \Gamma_2 \times \ldots \times \Gamma_n$, and suppose that  $H$ is virtually-coabelian of  corank $r$.
	Suppose each $\Gamma_i$
	is finitely presented and $n \geq 3$. For each
	$i$, let $( \alpha_i, \rho_i)$ be an area-radius pair for some finite presentation of
	$\Gamma_i$. Define
	$$\alpha(l) = \max(
	\{l^2\} \cup \{ \alpha_i(l)  : 1 \leq i \leq n \})
	$$
	and
	$$\rho(l) = \max(
	\{ l \} \cup \{ \rho_i(l) : 1 \leq i \leq n \})
	.$$
	Then
	$\rho^{2r} \alpha$
	is an isoperimetric function for
	$H$.
\end{theorem}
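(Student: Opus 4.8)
The plan is to follow the standard strategy for estimating Dehn functions of subdirect products: normalise the situation by passing to a subgroup of finite index, write down a finite presentation of $H$ of ``$1$--$2$--$3$'' type, and then induct on the corank $r$, paying one factor of $\rho^2$ for each unit of corank.

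First I would reduce to the case in which $A:=D/H$ is torsion-free. Dehn functions and area--radius pairs (up to $\simeq$), the corank, virtual coabelianity, and the virtually-filling property are all unaffected by passage to a subgroup of finite index, so I may replace $D$ by a finite-index subgroup that is again a product of finite-index subgroups of the $\Gamma_i$ and has commutator subgroup inside $H$ (a normal-core argument arranges this compatibly with the product decomposition), and then replace $H$ by the preimage in $D$ of the torsion subgroup of $D/H$; by Lemma~\ref{l:qi} and its area--radius analogue this changes $\delta_H$ and the data $\alpha_i,\rho_i$ only up to $\simeq$, while now $A\cong\Z^r$. Writing $\phi\colon D\to A$ for the quotient map and $\phi_i\colon\Gamma_i\to A$ for its restrictions, $H$ becomes the $n$-fold fibre product $\{(g_1,\dots,g_n)\mid \phi_1(g_1)+\cdots+\phi_n(g_n)=0\}$, each $\phi_i$ has finite-index image, and $\sum_i\mathrm{im}(\phi_i)=A$. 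Because $n\ge 3$ and $H$ is virtually filling, $H$ --- and likewise each fibre product $H^{(k)}$ obtained from a coordinate quotient $\Z^r\to\Z^{r-k}$ --- surjects with finite-index image onto the product of every pair of factors, so each of these is finitely presented by the Virtual Surjection to Pairs theorem \cite{BHMS}. Fixing finite presentations $\langle X_i\mid R_i\rangle$ of the $\Gamma_i$, finitely many ``diagonal'' generators pairing a preimage in $\Gamma_1$ of each basis vector of $A$ with a preimage in $\Gamma_i$ ($2\le i\le n$), and finitely many normal generators of each $\ker\phi_i$ placed in the $i$-th coordinate, one obtains explicit finite presentations of $H$ and of each $H^{(k)}$ whose relators are the rewritten $R_i$, the commuting relators $[\Gamma_i,\Gamma_j]=1$ ($i\neq j$), and finitely many relators governing the diagonal generators; the presentations of consecutive $H^{(k)}$ can be chosen to agree except in the choice of diagonal generators.

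The estimate I would prove by induction on $r$, in the strengthened form: \emph{$(\rho^{2r}\alpha,\ C\rho)$ is an area--radius pair for $H$} for a suitable constant $C$ --- the isoperimetric statement is then the area half, and the explicit radius bound is what makes the induction close. For $r=0$, $H$ has finite index in $D=\Gamma_1\times\cdots\times\Gamma_n$; given $w=_D1$ with $|w|\le l$ one sorts the letters of $w$ into its $n$ coordinate blocks at the cost of at most $l^2$ commuting relators and conjugators of length $\le l$, then fills each block inside the relevant $\Gamma_i$ with area $\le\alpha_i(l)\le\alpha(l)$ and radius $\le\rho_i(l)\le\rho(l)$, so $(\alpha,\rho)$ is an area--radius pair for $D$ and hence $(\rho^{0}\alpha,C\rho)$ is one for $H$. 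For the inductive step, split $A=\Z\oplus A'$ with $A'\cong\Z^{r-1}$, let $\psi\colon D\to\Z$ be the corresponding coordinate of $\phi$, and let $H':=\ker(D\to A')\supseteq H$, so $H\triangleleft H'$ with $H'/H\cong\Z$ and $\mathrm{corank}(H')=r-1$. By induction $(\rho^{2(r-1)}\alpha,\ C\rho)$ is an area--radius pair for $H'$. Given $w=_H1$ with $|w|\le l$, regard $w$ as a word over $H'$ and fix a van Kampen diagram $\Delta'$ for it over the presentation of $H'$ with $\mathrm{Area}(\Delta')\le\rho^{2(r-1)}\alpha(l)$ and radius $\le C\rho(l)$. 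The presentations of $H$ and $H'$ differ only in the choice of diagonal generators --- those of $H'$ being adapted to $A'$ rather than to $A$, so that a diagonal generator of $H'$ carries a bounded $\psi$-value rather than lying in $H=\ker\psi$ --- so $\Delta'$ fails to be a diagram over $H$ only along edges labelled by such generators, each carrying a bounded $\psi$-defect. I would clear these defects one at a time: a defect localised at distance $d\le 2C\rho(l)+l\preceq\rho(l)$ from $\partial\Delta'$ is pushed out to the boundary along a path of length $\preceq\rho(l)$ inside one factor (possible since that $\phi_i$ has finite-index image) and there cancelled against a matching path of the same length in another factor, the two reconciled by commuting relators at cost $\preceq\rho(l)^2$ while lengthening conjugators only by $\preceq\rho(l)$. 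Since $\Delta'$ has at most $\rho^{2(r-1)}\alpha(l)$ cells, the total correction area is $\preceq\rho(l)^2\cdot\rho^{2(r-1)}\alpha(l)=\rho^{2r}\alpha(l)$, the radius stays $\preceq\rho(l)$, and the $O(l^2)\preceq\alpha(l)$ cost of passing between words over $H$ and over $H'$ is absorbed; this completes the step.

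The main obstacle is exactly this surgery, and especially the demand for \emph{simultaneous} control of area and radius: one must set up the presentations of $H$ and $H'$ so that their discrepancy is confined to finitely many relator types of bounded $\psi$-value; one must verify that a unit of $\psi$-defect at distance $d$ from the boundary can be annihilated over $H$ at cost $O(d^2)$ without inflating the radius past $O(d)$ --- this is where the commuting relators transport a path from one factor to another at linear cost per unit length and quadratic cost for the rectangle swept out, and where $n\ge3$ together with the virtually-filling hypothesis guarantee a spare factor is always available; and one must run the bookkeeping uniformly so that the strengthened statement, with radius precisely $\preceq\rho$, is reproduced. This delicate diagram analysis is the substance of \cite{dison}.
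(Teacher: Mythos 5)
This statement is not proved in the paper at all: it is quoted verbatim as Theorem A of Dison's thesis and invoked purely as a citation (\cite[Thm.~A]{dison}), so there is no internal argument to compare your proposal against. Judged on its own terms, your sketch reconstructs the expected overall strategy, and very plausibly the shape of Dison's own argument: pass to finite-index subgroups so that $D/H\cong\Z^r$ with each $\phi_i$ of finite-index image, obtain finite presentability of $H$ and the intermediate fibre products from the Virtual Surjection to Pairs theorem \cite{BHMS}, and induct on the corank, paying a factor $\rho^2$ per unit of corank while carrying a full area--radius pair (not just an area bound) so that the induction can close -- indeed, radius control is precisely why the notion of area--radius pair appears in \cite{dison} in the first place. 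The finite-index reductions and the base case $r=0$ are fine, modulo the routine fact that area--radius pairs are preserved up to $\simeq$ under commensurability (the analogue of Lemma \ref{l:qi}).

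As a proof, however, the proposal has a genuine gap, which you yourself flag: the entire content of the theorem sits in the inductive step, and you describe it only at the level of intent before deferring to \cite{dison}. Concretely: (i) a van Kampen diagram over $H'$ for a word in the generators of $H$ contains edges labelled by generators that do not lie in $H$ at all, so it cannot be converted into a diagram over $H$ by treating each such edge as an isolated ``defect'' to be pushed to the boundary; the defects carry $\psi$-values that must be paired and cancelled coherently (one has to track the $\psi$-flux across subdiagrams), and it is not established that this can be done at cost $O(\rho^2)$ per defect and, crucially, without the conjugator lengths compounding as successive defects are cleared -- the radius bound $\preceq\rho$ is exactly what the induction needs and exactly what one-at-a-time surgery threatens to destroy. (ii) The existence of a ``matching path in another factor'' uses virtual filling only up to finite index (the $\psi$-defect of a given diagonal generator need not lie in $\mathrm{im}\,\phi_j$ itself), so bounded error terms appear that must be absorbed uniformly. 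Until these points are carried out, you have a correct plan and an accurate diagnosis of where the difficulty lies, but not a proof; the paper offers no shortcut, since it simply imports Dison's result.
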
  

\subsection{Isoperimetric functions for central extensions}

Our proof of Theorem \ref{t:dehn} also relies on
the following  special case of the Bryant Park Lemma from \cite{B-B-M}.  

\begin{lemma} \label{central} Let $G$ be a finitely presented group and let $C$ be a finitely
	generated central subgroup. The Dehn functions of $G$ and  $G/ C$ satisfy the inequality $\delta_G(n)\preceq \delta_{G/C}(n)^2$.
	In particular, if $G/C$ satisfies a polynomial isoperimetric inequality, then so does $G$.
\end{lemma}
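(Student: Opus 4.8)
The plan is to deduce the inequality from the framework of area--radius pairs recalled in Section~\ref{s:dehn}, exploiting that $C$ is central (so conjugating words can be discarded) and finitely generated (so $G$ is finitely presented and $C$ itself has a quadratic Dehn function). Write $\pi\colon G\to Q:=G/C$ and fix a finite presentation $Q=\langle X\mid R\rangle$, a lift $\tilde x\in G$ of each $x\in X$, and generators $c_1,\dots,c_k$ of $C$. For each $r\in R$ the lift $\tilde r\in F(\tilde X)$ represents an element of $C$; fix a word $v_r$ in the $c_i^{\pm 1}$ with $v_r=_G\tilde r$. Then
$$
\mathcal P_G \;=\; \bigl\langle\, \tilde X,\ c_1,\dots,c_k \ \bigm|\ \tilde r\,v_r^{-1}\ (r\in R),\quad [c_i,\tilde x]\ (\text{all }i,x),\quad S_C \,\bigr\rangle
$$
is a finite presentation of $G$, where $S_C$ is a finite set of relations presenting the finitely generated abelian group $C$ on the $c_i$ (these include the $[c_i,c_j]$). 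Note also that $\delta_C(n)\preceq n^2$, since every finitely generated abelian group has an at most quadratic Dehn function.

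Now let $w$ be a word over the generators of $\mathcal P_G$ with $|w|\le n$ and $w=_G 1$. \emph{Step one:} using the relators $[c_i,\tilde x]$ (and those in $S_C$), shuffle all occurrences of the $c_i^{\pm1}$ in $w$ to the right; this costs at most $\binom n2\le n^2$ relators and produces an equality $w=_{\mathcal P_G}w'\,u$ with $w'\in F(\tilde X)$ and $u\in F(c_1,\dots,c_k)$ of length $\le n$. Since $u$, and hence $w'$, maps to $1$ in $Q$, we have $w'=_Q 1$. By Lemma~\ref{l:pair}, $(\delta_Q,\delta_Q)$ is an area--radius pair for $Q$, so there is an equality
$$
w' \;=\; \prod_{i=1}^{M}\theta_i^{-1}\,r_i\,\theta_i
$$
in $F(X)$ with $r_i\in R\cup R^{-1}$, $M\le\delta_Q(n)$ and $|\theta_i|\le\delta_Q(n)$ for all $i$.

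\emph{Step two:} lift this identity to $F(\tilde X)$, getting $w'=\prod_i\tilde\theta_i^{-1}\tilde r_i\tilde\theta_i$. For each $i$, apply one relator $\tilde r_i v_{r_i}^{-1}$ to replace $\tilde r_i$ by the central word $v_{r_i}$, and then apply at most $|v_{r_i}|\cdot|\tilde\theta_i|\le\mathrm{const}\cdot\delta_Q(n)$ of the relators $[c_j,\tilde x]$ to move $v_{r_i}$ out through $\tilde\theta_i$; as $v_{r_i}$ is a word in the central letters $c_j$, this leaves $v_{r_i}$ and freely cancels $\tilde\theta_i^{-1}\tilde\theta_i$. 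Carrying this out for every $i$ converts $w'$ into the word $z_0:=\prod_i v_{r_i}$, at a total cost of at most $\mathrm{const}\cdot M(1+\delta_Q(n))\le\mathrm{const}\cdot\delta_Q(n)^2$ relators of $\mathcal P_G$; moreover $|z_0|\le\mathrm{const}\cdot M\le\mathrm{const}\cdot\delta_Q(n)$. Combined with Step one this gives $w=_{\mathcal P_G}z_0\,u$. \emph{Step three:} since $w=_G 1$ and $C$ embeds in $G$, the word $z_0u\in F(c_1,\dots,c_k)$ represents $1$ in $C$; as $\delta_C\preceq n^2$ it can be reduced to the empty word using at most $\mathrm{const}\cdot|z_0u|^2\le\mathrm{const}\cdot(\delta_Q(n)+n)^2$ relators from $S_C$. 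Summing the three contributions,
$$
\mathrm{Area}_{\mathcal P_G}(w)\ \le\ \mathrm{const}\cdot\bigl(n^2+\delta_Q(n)^2\bigr),
$$
i.e. $\delta_G(n)\preceq\max\{n^2,\delta_{G/C}(n)^2\}\simeq\delta_{G/C}(n)^2$ once $G/C$ is not virtually cyclic; and if $\delta_{G/C}(n)\preceq n^d$ then $\delta_G(n)\preceq n^{\max\{2,2d\}}$, which is the final assertion.

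\emph{Main obstacle.} The only genuine subtlety is Step two: one needs the number $M$ of relators \emph{and} the conjugator lengths $|\theta_i|$ in the chosen expression for $w'$ to be bounded \emph{simultaneously}, since the cost of extracting the central defects is of order $\sum_i|\theta_i|\le M\cdot\max_i|\theta_i|$. This is precisely why one invokes Lemma~\ref{l:pair} (an area--radius pair rather than a mere isoperimetric bound), and why a square, rather than a single power, of $\delta_{G/C}$ appears. Everything else --- the explicit finite presentation $\mathcal P_G$, the shuffling of central letters, and the quadratic filling over $C$ --- is routine and depends only on $C$ being central and finitely generated. (This recovers the special case of the Bryant Park Lemma of \cite{B-B-M} stated above.)
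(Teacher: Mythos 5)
Your proof is correct and follows essentially the same route as the paper's: shuffle the central letters to one side at quadratic cost, fill the quotient word using the area--radius pair $(\delta_{G/C},\delta_{G/C})$ supplied by Lemma \ref{l:pair}, trade each lifted relator for its central word and push it past the conjugator at total cost $\preceq \delta_{G/C}(n)^2$, then finish with a quadratic filling of the resulting word in $C$. The only immaterial difference is that the paper first reduces to $C$ free abelian via Lemma \ref{l:qi}, whereas you keep a general finite presentation $S_C$ of the finitely generated abelian group $C$ and invoke its quadratic isoperimetric inequality directly.
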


\begin{proof} In the light of  Lemma \ref{l:qi}, we may assume that $C$ is free abelian, of rank $k$ say.
	
	Let $\langle b_1, \ldots, b_m \mid r_1, \ldots, r_s \rangle$ be a finite presentation of $G/C$. We
	fix a basis  $\{a_1, \ldots, a_k \}$ for $C $ and choose words $\sigma_1, \ldots, \sigma_s$ in the letters $a_j^{\pm 1}$
	such that  $r_i \sigma_i = 1$ in $G$ for $ i=1,\dots,s$.  Let $\mu =\max_i |\sigma_i|$. We work with the following presentation of $G$: 
	$$
	\langle a_1, \ldots, a_k, b_1, \ldots, b_m \mid $$ $$r_i \sigma_i = 1,\, [a_t, a_j] = 1 = [a_t,b_d] \hbox{ for } 1 \leq i \leq s,\ 1 \leq  j,t \leq k,\   1 \leq d \leq m \rangle.
	$$
	Let $F$ be the free group on $\{ a_1, \ldots, a_k, b_1, \ldots, b_m \}$. For words $v_1, v_2\in F$, we write $v_1 =_c v_2$ if $v_1$ and $v_2$ represent the same element in $G$ and we can transform $v_1$ into $v_2$ ``at cost at most $c$", i.e. by applying at most  $c$ relations 
	from the above presentation of $G$; in other words ${\rm{Area}}(v_1v_2^{-1})\le c$.
	
	Let $w\in F$ be a word of length $|w|=n$ and suppose $w=1$ in $G$.  Using the relations $[a_t, b_d] = 1$ we have
	$$
	w =_{n^2} w_1(\underline{b}) w_2(\underline{a})
	$$
	where $w_2(\underline{a})$ is a word in the letters $\{ a_1^{\pm 1}, \ldots, a_k^{\pm 1} \}$ and  $w_1(\underline{b})$ is a word 
	in the letters $\{ b_1^{\pm 1}, \ldots, b_m^{\pm 1} \}$; both words have length at most $n$, and $w_1=1$ in $G/C$.
	Let $N={\rm{Area}}(w_1)$, which  is bounded by $\delta(n)$, where $\delta = \delta_{G/C}$ is the Dehn function of $G/C$. Then we have the following
	equalities, for some choice of relators $r_{j(i)}\in \{ r_1^{\pm 1}, \ldots, r_s^{\pm 1}\}$, where the conjugating elements $\theta_i$
	can  be taken to have length $|\theta_i|\le \delta(n)$, as in Lemma \ref{l:pair}:
	$$
	w_1(\underline{b}) = \prod_{1 \leq i \leq N} \theta_i^{-1} r_{j(i)} \theta_i = 
	\prod_{1 \leq i \leq N} \theta_i^{-1} (r_{j(i)} \sigma_{j(i)}) \sigma_{j(i)}^{-1} \theta_i =_{c}
	$$ $$    \prod_{1 \leq i \leq N} \theta_i^{-1} (r_{j(i)} \sigma_{j(i)})  \theta_i \sigma_{j(i)}^{-1}       =_N \prod_{1 \leq i \leq N} \sigma_{j(i)}^{-1} =: w_3(\underline{a})
	$$
	The first and second equalities are in the free group $F$ and the last is a definition. The cost $c$ of the third
	equality counts the number of commutation relations applied to move each  $\sigma_{j(i)}^{-1}$ past $\theta_i$: clearly, 
	$c \leq \sum_{1 \leq i \leq N} |\sigma_{j(i)}|.|\theta_i|\le \mu N\, \delta(n) \le \mu \delta(n)^2$.
	The cost of the fourth equality comes from applying the relations $r_j \sigma_j=1$ (followed by free reduction).
	Note that $w_3$ has length $|w_3|\le N\mu$ and hence $w_0:=w_3w_2$ has length at most $n + N\mu$. And $N\le \delta(n)$.
	
	At this stage, we have  
	$$w =_{n^2 + c + N} w_0(\underline{a})$$
	where $w_0$ is a word that represents the identity in $C= \<a_1,\dots,a_k\mid [a_i,a_j]=1, i,j=1,\dots,k\>$
	and  $|w_0| \le n + \mu \delta(n)$. The  Dehn function of $C$ is $q(m)=m^2$, 
	so $w_0 =_{M} 1$ where $M=(n + \mu \delta(n))^2$. Thus we have transformed $w$, which has length $|w|=n$, into the empty word at a total cost of at most
	$$
	n^2+ c + N + (n + \mu \delta(n))^2 \le n^2 + \mu \delta(n)^2 + \delta(n) + (n + \mu \delta(n))^2,
	$$
	and the last term is $\simeq \delta(n)^2$.
\end{proof}

The following corollary bends Lemma \ref{central} to the needs of Theorem \ref{t:dehn}.

\begin{corollary}\label{c:induct}
	Let $\G$ be a finitely presented group and let $A<\G$ be a normal subgroup that is finitely generated and abelian.
	Regard $A$ as a $\Z \G$-module via conjugation.
	Suppose that there is a filtration of $A$ by $\Z \G$-submodules $0=I_0<I_1<\dots < I_k=A$ such that $\G$ acts
	trivially on each of the quotients $I_j/I_{j-1}$ that is infinite.  Then the Dehn functions of $\G$ and $\G/A$
	satisfy $\delta_\G(n)\preceq \delta_{\G/A}(n)^{2^k}$.
	In particular, if $\G/A$ satisfies a polynomial isoperimetric inequality then so does $\G$.
\end{corollary}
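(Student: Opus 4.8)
The plan is to prove Corollary \ref{c:induct} by induction on the length $k$ of the filtration, using Lemma \ref{central} as the engine at each step. The base case $k=0$ is trivial since then $A=0$ and $\Gamma=\Gamma/A$. For the inductive step, suppose the result holds for filtrations of length $k-1$, and consider $0=I_0<I_1<\dots<I_k=A$. The idea is to split off the bottom piece $I_1$ of the filtration: set $\bar\Gamma = \Gamma/I_1$. Then $A/I_1$ is a normal finitely generated abelian subgroup of $\bar\Gamma$, carrying the induced $\Z\bar\Gamma$-module structure, and it inherits the filtration $0 = I_1/I_1 < I_2/I_1 < \dots < I_k/I_1 = A/I_1$ of length $k-1$ with $\bar\Gamma$ acting trivially on each infinite quotient (the quotients $(I_j/I_1)/(I_{j-1}/I_1) \cong I_j/I_{j-1}$ are unchanged, and the $\bar\Gamma$-action factors through the $\Gamma$-action). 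Moreover $\bar\Gamma/(A/I_1) \cong \Gamma/A$. So by the inductive hypothesis, $\delta_{\bar\Gamma}(n) \preceq \delta_{\Gamma/A}(n)^{2^{k-1}}$.

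It then remains to compare $\delta_\Gamma$ with $\delta_{\bar\Gamma} = \delta_{\Gamma/I_1}$ using the single central-type extension $1 \to I_1 \to \Gamma \to \bar\Gamma \to 1$. Here we must be slightly careful: $I_1$ is a normal abelian subgroup but is not assumed central in $\Gamma$ — it is central exactly when $I_1$ is one of the infinite quotients (on which $\Gamma$ acts trivially) or more generally when the $\Gamma$-action on $I_1$ is trivial. If $I_1$ is infinite, then by hypothesis $\Gamma$ acts trivially on $I_1 = I_1/I_0$, so $I_1$ is central in $\Gamma$, and Lemma \ref{central} gives $\delta_\Gamma(n) \preceq \delta_{\bar\Gamma}(n)^2$, whence $\delta_\Gamma(n) \preceq \delta_{\Gamma/A}(n)^{2^k}$ as required. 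If $I_1$ is finite, then the quotient map $\Gamma \to \bar\Gamma$ has finite kernel and image of finite index (index $1$), so by Lemma \ref{l:qi} the Dehn functions of $\Gamma$ and $\bar\Gamma$ are $\simeq$-equivalent, and the bound follows a fortiori. In either case the induction closes. The final clause about polynomial isoperimetric inequalities is immediate: if $\delta_{\Gamma/A}(n) \preceq n^d$ then $\delta_\Gamma(n) \preceq n^{d\cdot 2^k}$.

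I expect the main obstacle to be a purely bookkeeping one rather than a conceptual one: namely verifying that the induced filtration on $A/I_1$ satisfies the hypotheses (that the action of $\bar\Gamma$ on each quotient is well-defined and trivial on the infinite ones), and handling cleanly the dichotomy between $I_1$ finite and $I_1$ infinite so that Lemma \ref{central} is only invoked in the genuinely central case. A subtlety worth spelling out is that Lemma \ref{central} as stated requires the central subgroup to be finitely generated; since $I_1$ is a $\Z\Gamma$-submodule of the finitely generated abelian group $A$, it is finitely generated as an abelian group (subgroups of finitely generated abelian groups are finitely generated), so this hypothesis is automatically met. With these points checked, the proof is a short induction.
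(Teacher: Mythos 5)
Your proof is correct and follows essentially the same route as the paper's: induction on the filtration length, passing to $\G/I_1$ at each step, and splitting into the cases $I_1$ finite (Lemma \ref{l:qi}) versus $I_1$ infinite and hence central (Lemma \ref{central}). Your choice of $k=0$ as the base case is marginally cleaner than the paper's $k=1$, and your observation that $I_1$ is automatically finitely generated as a subgroup of a finitely generated abelian group is a detail the paper leaves implicit; otherwise the two arguments coincide.
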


\begin{proof} We proceed by induction on $k$, the length of the filtration. If $k=1$  then Lemma \ref{central} applies. In the
	inductive step we apply the case $k-1$ to $\G/I_1$, with  $A/I_1$ filtered by $(I_j/I_1)_j$,
	noting that $\G/A\cong (\G/I_1)/(A/I_1)$.
	
	If $I_1$ is finite, then the Dehn function of $\G/I_1$ is equivalent to that of $\G$, by Lemma \ref{l:qi}, and we are done.
	If  $I_1$ is infinite, then our hypothesis on the action of $\G$ implies that $I_1<\G$ is central.
	From the induction we know that $\delta_{\G/I_1}(n)\preceq \delta_{\G/A}(n)^{2^{k-1}}$,
	and   Lemma \ref{central} implies that $\delta_{\G}(n)\preceq \delta_{\G/I_1}(n)^2$.  This completes the induction.
\end{proof}

\subsection{Proof of Theorem \ref{t:dehn}}
For the convenience of the reader, we repeat the statement of Theorem \ref{t:dehn}.

\begin{theorem} \label{t:56}
	If $G$ is a finitely presented group whose maximal metabelian quotient $G/G''$ is
	virtually nilpotent, then there is a polynomial $p(x)$ such that 
	$$ \f_G(n) \preceq  \f_{\X(G)}(n) \preceq p\circ \f_G(n).$$
	In particular, $G$ satisfies a polynomial isoperimetric inequality if and only if
	$\X(G)$ satisfies a polynomial isoperimetric inequality (of different degree, in 
	general).
\end{theorem}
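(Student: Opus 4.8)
The plan is to prove the two inequalities in
$$\f_G(n) \preceq \f_{\X(G)}(n) \preceq p\circ \f_G(n)$$
separately, the lower bound being routine and the upper bound carrying all the weight.

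\textbf{The lower bound.} Since $\X(G)=L\rtimes G$, the projection $\X(G)\to G$ with kernel $L$ is a retraction, so $G$ is a retract of $\X(G)$. As noted in the discussion preceding Definition \ref{d:dist}, the Dehn function of a finitely presented group is bounded below by that of any retract; hence $\f_G\preceq\f_{\X(G)}$. (One does have to know $\X(G)$ is finitely presented, which is the main theorem of \cite{BK1}, valid since $G$ is finitely presented.)

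\textbf{The upper bound --- strategy.} The idea is to interpolate between $\X(G)$ and $G$ through the normal series
$$W \;\le\; DL \cap \X(G)\text{-relevant piece},\qquad \X(G)\ \longrightarrow\ \X(G)/W \cong \imm(\rho)\ \longrightarrow\ G,$$
and to bound the Dehn function at each stage. First I would recall that $\X(G)/W\cong\imm(\rho)$ is a subdirect product of $G\times G\times G$ that virtually-surjects onto pairs and is virtually-coabelian (its quotient $\X(G)/DL\cong Q=G/G'$ is abelian, indeed virtually nilpotent by hypothesis, and then one passes to a finite-index subgroup with abelian quotient of finite corank --- here finiteness of the corank is where $G/G''$ virtually nilpotent, hence $Q=G/G'$ finitely generated, is used). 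So by Dison's Theorem \ref{Disonthm}, taking $(\alpha_i,\rho_i)=(\max\{n^2,\f_G(n)\},\max\{n,\f_G(n)\})$ — legitimate because $(\f_G,\f_G)$ is an area-radius pair for $G$ by Lemma \ref{l:pair} — one gets an isoperimetric function for $\imm(\rho)$, and hence for the finite-index subgroup, of the form $\f_G(n)^{2r}\cdot\max\{n^2,\f_G(n)\}\preceq\f_G(n)^{2r+1}$, a polynomial in $\f_G$. By Lemma \ref{l:qi} this transfers back to $\X(G)/W$ itself.

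\textbf{Climbing the central-type filtration.} It remains to pass from $\X(G)/W$ up to $\X(G)$, i.e.\ to control a finitely generated normal abelian subgroup $W$ under which the quotient is already understood. This is exactly what Corollary \ref{action21} delivers: after passing to a finite-index subgroup $\X(G)_1$ (preimage of the $Q_1$ there, still finite-index since $[Q:Q_1]<\infty$), $W$ has a filtration by $\Z Q_1$-submodules in which every infinite quotient is central. Here one also needs $W$ to be \emph{finitely generated as an abelian group} — this follows from Lemma \ref{l:KS} since $G$ of type $\FP_2$ (being finitely presented) has $G'/G''$ finitely generated (as $G/G''$ is virtually nilpotent). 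Then Corollary \ref{c:induct}, applied to $\G=\X(G)_1$ and $A=W$, gives $\delta_{\X(G)_1}(n)\preceq\delta_{\X(G)_1/W}(n)^{2^k}$; combined with the polynomial-in-$\f_G$ bound for $\X(G)_1/W\subseteq\imm(\rho)$ and one final application of Lemma \ref{l:qi} to return from $\X(G)_1$ to $\X(G)$, this yields $\f_{\X(G)}(n)\preceq p\circ\f_G(n)$ for a suitable polynomial $p$. Finally, since for $p,q\ge1$ one has $n^p\simeq n^q$ iff $p=q$, and $\f_G(n)\preceq\f_{\X(G)}(n)\preceq p(\f_G(n))$ with $p$ polynomial, $\f_G$ is bounded by a polynomial iff $\f_{\X(G)}$ is, proving the ``in particular'' clause.

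\textbf{Main obstacle.} The crux is not any single estimate but checking that the hypotheses of Dison's theorem and of Corollary \ref{action21} genuinely apply — specifically, that $\imm(\rho)$ (or a finite-index subgroup) is virtually-filling and virtually-coabelian \emph{of finite corank} in $G\times G\times G$, which is where the standing assumption that $G/G''$ is virtually nilpotent is essential (it forces $Q=G/G'$, hence the corank, to be finite), and that the various passages to finite-index subgroups are compatible so that Lemma \ref{l:qi} can be invoked at each transition without the constants degenerating. The bookkeeping of which finite-index subgroup is in play at each stage — Dison's applies to a finite-index subgroup of $\imm(\rho)$, Corollary \ref{action21} to the $Q_1$-preimage in $\X(G)$ — is the part that requires care rather than cleverness.
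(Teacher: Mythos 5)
Your proof follows essentially the same route as the paper: retraction for the lower bound, then Dison's theorem for $\imm(\rho)\cong\X(G)/W$, Corollary \ref{action21} plus Corollary \ref{c:induct} to climb back up through the filtration of $W$, and Lemma \ref{l:KS} to ensure $W$ is finitely generated. One small misattribution: finiteness of the corank of $\imm(\rho)$ in $G\times G\times G$ needs only that $G$ is finitely generated (so $Q=G/G'$ is a finitely generated abelian group); the virtual-nilpotence hypothesis on $G/G''$ is used rather to make $G'/G''$ finitely generated (hence $W$ finitely generated) and to invoke Corollary \ref{action21}.
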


\begin{proof}   $G$ is a retract of $\X(G)$,  whence the leftmost inequality. 
	For the converse, we appeal to
	Corollary \ref{action21}, which provides us with 
	a subgroup $Q_1$ of finite index in $Q$ such that $W(G)$ has a filtration by $\Z Q_1$-submodules 
	$0=I_0<I_1 < I_2 <\dots < I_k=W$ such that $Q_1$ acts trivially on each infinite quotient $I_j/I_{j-1}$ of this filtration.  
	Let $\pi : \X(G) \to \X(G) / DL = Q$ be the canonical epimorphism. Then $T = \pi^{-1}(Q_1)$ has  finite index in $\X(G)$, so their
	Dehn functions are $\simeq$ equivalent.
	
	As $G/G''$ is finitely generated and virtually nilpotent, its subgroup $G'/G''$ is finitely generated, and therefore  $W$
	is finitely generated,  by Lemma \ref{l:KS}. Thus we can appeal to Corollary \ref{c:induct} with $W$ in the role of $A$
	and $T$ in the role of $\G$. From this we deduce that $T$ (hence $\X(G)$) will satisfy an isoperimetric
	inequality of the required form if $T/W$ does. But $T/W$ has finite index in $\X(G) / W$, and  
	$W$ is the kernel of the canonical map $\rho: \X(G) \to G\times G \times G$,
	whose  image is a  filling subdirect product that is normal with quotient $G/G'$.   Dison's theorem \ref{Disonthm} 
	tells us that the Dehn function of this subdirect product is bounded above by
	a polynomial function of the Dehn function of $G$. This completes the proof.
\end{proof}

\begin{cor}  Let $G$ be a finitely presented group whose commutator subgroup is perfect. 
	Then $\X(G)$ satisfies  a polynomial isoperimetric inequality if and only if $G$ does. 
\end{cor}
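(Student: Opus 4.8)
The plan is to reduce the statement immediately to Theorem \ref{t:dehn}. The only thing one has to check is that its hypothesis is met: if the commutator subgroup $G'$ is perfect then $G'' = [G',G'] = G'$, so the maximal metabelian quotient $G/G''$ is nothing but the abelianisation $Q = G/G'$. Since a finitely presented group is finitely generated, $Q$ is a finitely generated abelian group, hence nilpotent and a fortiori virtually nilpotent. Theorem \ref{t:dehn} then supplies a polynomial $p(x)$ with $\f_G(n) \preceq \f_{\X(G)}(n) \preceq p\circ\f_G(n)$, from which the equivalence of polynomial isoperimetric inequalities for $G$ and $\X(G)$ is immediate.

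For a reader who wants to see the argument without invoking the full machinery of Section \ref{s:structures}, I would also record the shorter route that the perfect-commutator hypothesis makes available. Because $G'/G'' = 1$, Lemma \ref{l:KS} shows that $W = W(G)$ is finitely generated as an abelian group, and Corollary \ref{action210} shows that $Q$ --- hence $\X(G)$ --- acts nilpotently on $W$. Therefore the augmentation filtration $W \supseteq W\A(\Z Q) \supseteq W\A(\Z Q)^2 \supseteq \cdots$ reaches $0$ after finitely many, say $k$, steps, and it is a filtration of $W$ by $\Z\X(G)$-submodules on whose successive quotients $\X(G)$ acts trivially. Applying Corollary \ref{c:induct} with $A = W$ and $\G = \X(G)$ gives $\f_{\X(G)}(n) \preceq \f_{\X(G)/W}(n)^{2^k}$. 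Finally $\X(G)/W \cong \imm(\rho)$ is a subgroup of $G\times G\times G$ that fills it and is normal with abelian quotient $G/G'$, so Dison's Theorem \ref{Disonthm} bounds $\f_{\X(G)/W}$ above by a polynomial in $\f_G$; composing the two bounds yields the required polynomial $p$. The reverse inequality $\f_G \preceq \f_{\X(G)}$ costs nothing, since $G$ is a retract of $\X(G)$.

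I do not anticipate a real obstacle here: the entire content is the one-line observation $G'' = G'$, after which the corollary is inherited either from Theorem \ref{t:dehn} or, in the self-contained version, from Corollaries \ref{action210} and \ref{c:induct} together with Dison's theorem. The one point worth emphasising in the write-up is that the degree of the polynomial isoperimetric inequality one obtains for $\X(G)$ is in general strictly larger than that of $G$ --- this is already visible for virtually abelian $G$, where the nilpotency class of $\X(G)$ exceeds that of $G$ \cite{G-R-S}, and it is reflected here in the exponents $2^k$ and $2r$ coming from Corollary \ref{c:induct} and Theorem \ref{Disonthm} respectively.
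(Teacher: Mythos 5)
Your proposal is correct and your primary argument — observe that $G'$ perfect gives $G''=G'$, so $G/G''=G/G'$ is a finitely generated abelian group, in particular virtually nilpotent, and then invoke Theorem~\ref{t:dehn} — is exactly the argument the paper intends (the corollary is stated without a separate proof precisely because this reduction is immediate). The alternative ``self-contained'' route you sketch via Lemma~\ref{l:KS}, Corollary~\ref{action210}, the augmentation filtration of $W$, Corollary~\ref{c:induct}, and Dison's Theorem~\ref{Disonthm} is also sound, but it is just the proof of Theorem~\ref{t:dehn} specialised to this case (with the finite-index subgroup $Q_1$ collapsing to all of $Q$), rather than a genuinely different approach.
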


\begin{cor}  $\X(F)$ has a polynomial isoperimetric function, where $F$ is  Thompson's group.
\end{cor}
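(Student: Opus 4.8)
The plan is to obtain this as an immediate application of Theorem \ref{t:dehn} with $G=F$, once we record the two standard facts about Thompson's group that are needed. First, $F$ is finitely presented, as noted after (\ref{Fpresentation}). Second, by Guba's theorem \cite{Guba} the Dehn function of $F$ is quadratic, $\f_F(n)\simeq n^2$; in particular $F$ satisfies a polynomial isoperimetric inequality.

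Next I would verify the hypothesis of Theorem \ref{t:dehn}, namely that the maximal metabelian quotient $F/F''$ is virtually nilpotent. Since $F'$ is simple it is perfect, so $F''=[F',F']=F'$; hence $F/F''=F/F'$, which is the abelianisation of $F$, namely $\Z^2$. This is abelian, hence nilpotent, so the hypothesis holds with room to spare: $F$ lies in the special case $G'=G''$ of Theorem \ref{t:dehn} treated by the immediately preceding corollary. Note also that, because $F'/F''=0$ is finitely generated, Lemma \ref{l:KS} guarantees that $W(F)$ is finitely generated as an abelian group, which is the point at which the finite-length good filtration provided by Corollary \ref{action21} and the inductive estimate of Corollary \ref{c:induct} can be brought to bear inside the proof of Theorem \ref{t:dehn}.

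With both hypotheses in hand, Theorem \ref{t:dehn} furnishes a polynomial $p$ with $\f_{\X(F)}(n)\preceq p\circ\f_F(n)\simeq p(n^2)$, so $\X(F)$ satisfies a polynomial isoperimetric inequality. There is no genuine obstacle here: all the difficulty has already been absorbed into the proof of Theorem \ref{t:dehn} (via Corollary \ref{action21}, Lemma \ref{l:KS}, Corollary \ref{c:induct} and Dison's Theorem \ref{Disonthm}), and the only content of this corollary is the observation that although $F$ is very far from being nilpotent, its maximal metabelian quotient is abelian, which is exactly what is required to run the machinery. If a concrete degree is wanted, one can track the quadratic bound $\f_F(n)\simeq n^2$ through Corollary \ref{c:induct}, where the exponent is governed by the length of a good $\Z Q_1$-submodule filtration of the finitely generated module $W(F)$, but we omit this.
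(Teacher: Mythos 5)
Your proof is correct and follows the paper's own route: the paper likewise observes that $F'$ is simple (hence perfect, so $F/F''=F/F'$ is abelian) and combines this with Guba's quadratic Dehn function for $F$ to invoke Theorem \ref{t:dehn} (via the preceding corollary for groups with perfect commutator subgroup). The additional remarks about $\Z^2$, Lemma \ref{l:KS} and the internal machinery of Theorem \ref{t:dehn} are accurate but not needed beyond what the paper states.
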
 

\begin{proof} The commutator subgroup of $F$ is simple, and Guba \cite{Guba} proved that the Dehn function of $F$ is quadratic.
\end{proof}

\begin{example} The Dehn functions of semidirect products $G = \mathbb{Z}^k \rtimes_A \mathbb{Z}$ are completely understood. Bridson and Gersten \cite{BridsonGersten} 
	proved that $\delta_G(n)$ is determined by how the entries of the matrix $A\in{\rm{GL}}(k,\Z)$ grow under iteration:
	$\delta_G(n)\simeq n^2 |\!|A^n|\!|$. Thus $\delta_G(n)$ is polynomial if 
	$A$ is virtually unipotent (i.e. all of its eigenvalues are roots of unity)
	and is exponential otherwise. According to Theorem \ref{t:dehn}, $\X(G)$
	will satisfy a polynomial isoperimetric inequality if $G$ does, and in the other case
	the Dehn function of $\X(G)$ will be exponential. 
	In the polynomial case, $\X(G)$ is virtually nilpotent, by Theorem \ref{t:nilp}.  
	In all cases, $\X(G)$ is polycyclic \cite{LO}.
\end{example}

\section{$n$-Engel groups and $\X(G)$}

Recall that a group $G$ is {\em{$n$-Engel}} if $[a,b,\ldots, b] = 1$
for all $a,b\in G$,  where $b$ appears $n$ times in the left-normed commutator. 

\begin{theorem} If $G$ is a finitely generated $n$-Engel group, then $\X(G)$ is $m$-Engel 
	for $m = n+d+s+3$, where $d$ is the nilpotency class of $G/ G''$ and $s$ is the nilpotency class of the action of $G$ on $L/ L'$.
\end{theorem}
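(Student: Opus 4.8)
\noindent The plan is to split the Engel condition for $\X(G)$ across the exact sequence $1\to W\to\X(G)\overset{\rho}\to G\times G\times G$: first I would observe that $\X(G)/W$ is $n$-Engel, so an $n$-fold left-normed commutator $[x,y,\dots,y]$ already lands in $W$; then I would use Theorem \ref{propB} together with Gruenberg's theorem \cite{gruen} to bound the nilpotency of the conjugation action on the abelian group $W$, which lets me kill this commutator after finitely many further applications of $y$. The first point is immediate: $\X(G)/W\cong\imm(\rho)$ is a subgroup of $G\times G\times G$, and since $G$ is $n$-Engel so is $G\times G\times G$, hence so is every subgroup of it. Thus for all $x,y\in\X(G)$ the commutator $[x,y,\dots,y]$ with $n$ occurrences of $y$ lies in $W$.

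Next I would bound the nilpotency of $W$ as a module over $\Z Q$, where $Q=G/G'$. Because $\X(G)$ acts on the abelian group $W$ by conjugation through $Q$ (as $W$ is central in $DL$), writing the action additively gives $[w,y,\dots,y]=w\cdot(\bar y-1)^{t}$ for $w\in W$, where $t$ is the number of occurrences of $y$ and $\bar y\in Q$ is the image of $y$; so it suffices to show that $(\A(\Z Q))^{d+s+3}$ annihilates $W$. Theorem \ref{propB} --- more precisely the computation inside its proof that is reused in Corollary \ref{action21} --- provides a $\Z Q$-submodule $W_0<W$ with $W\cdot(\A(\Z Q))^{s+3}\subseteq W_0$; this combines the fact that $W/(W\cap L')$ embeds in $L/L'$, on which $Q$ acts nilpotently of class $s$ by Proposition \ref{l:nilp}, with the bound $(W\cap L')\cdot(\A(\Z Q))^{3}\subseteq W_0$, and it identifies $W_0$ as a $\Z Q$-subquotient of $M=((G'/G'')\otimes_{\Z}(G'/G''))_{Q_0}$. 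By Gruenberg's theorem \cite{gruen}, $G/G''$ is nilpotent, and with nilpotency class $d$ this says $\gamma_{d+1}(G)\subseteq G''$; hence the conjugation action of $Q$ on $G'/G''$ is nilpotent and $(G'/G'')\cdot(\A(\Z Q))^{d}=0$. Since the $Q$-action on $M$ is trivial on one tensor factor and agrees with the conjugation action on the other, $(\A(\Z Q))^{d}$ annihilates $M$, hence also its subquotient $W_0$. Putting these together, $W\cdot(\A(\Z Q))^{s+3+d}\subseteq W_0\cdot(\A(\Z Q))^{d}=0$.

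Finally I would assemble the two steps: given $x,y\in\X(G)$, set $w=[x,y,\dots,y]$ with $n$ copies of $y$, so $w\in W$; then $[x,y,\dots,y]$ with $m=n+d+s+3$ copies of $y$ equals $[w,y,\dots,y]$ with $d+s+3$ copies of $y$, which is $w\cdot(\bar y-1)^{d+s+3}=0$. Hence $\X(G)$ is $m$-Engel, and both $s$ (by Proposition \ref{l:nilp}) and $d$ (by Gruenberg) are finite. The substantive input is entirely the already-established structural result (Theorem \ref{propB}) together with Gruenberg's theorem, so there is no genuinely hard step; the points that need care are the translation between iterated commutators in $\X(G)$ and powers of the augmentation ideal acting on $W$ (which uses that $W$ is central in $DL$ and that the action factors through $Q$), the observation that $W_0$ lies inside the module $M$ built from a tensor product so that its structure forces $(\A(\Z Q))^{d}$ to act as zero, and the bookkeeping that adds the three nilpotency indices $n$, $s$, $d$.
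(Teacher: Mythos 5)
Your proposal is correct and follows essentially the same route as the paper's proof: reduce modulo $W$ using that $\imm(\rho)\le G\times G\times G$ is $n$-Engel, translate iterated commutators into powers of $\A(\Z Q)$ acting on the central $\Z Q$-module $W$, invoke the filtration $W\cdot(\A(\Z Q))^{s+3}\subseteq W_0$ from the proof of Theorem~\ref{propB}, and kill $W_0$ via Gruenberg's theorem and the trivial action on one tensor factor of $M$. The only stylistic difference is that you spell out the additive-notation translation $[w,y,\dots,y]=w\cdot(\bar y-1)^t$ explicitly, which the paper leaves implicit.
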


\begin{proof} The $n$-Engel words are defined inductively by 
	$\g_1(x,y)=[x,y]$ and $\g_{n+1}(x,y)=[\g_n(x,y),y]$.
	Let $a,b \in \X(G)$.
	Since $\X(G)/ W(G)$ is a subgroup of $G \times G \times G$, it is $n$-Engel, so
	$$
	\gamma_n(a,b) \in W(G).
	$$ We again consider $W = W(G)$ as a $\mathbb{Z} Q$-module via conjugation, where $Q=G/G'$. In the proof of Theorem \ref{propB}
	we constructed a $\mathbb{Z} Q$-submodule $W_0$  of $W$ such that  $W (\A \,\mathbb{Z} Q)^{3+s} \subseteq W_0$, where $\A \,\mathbb{Z} Q$ is the augmentation ideal of $\mathbb{Z} Q$ and $W_0$ is a $\mathbb{Z} Q$-subquotient of $M = ((G'/ G'') \otimes (G'/ G''))_{Q_0}$,
	where $Q=G/G'$ acts by conjugation on the first factor of the tensor product and trivially on the second factor.
	Thus
	$$
	\gamma_{n+s+3}(a,b) \in [[[W,b], \ldots],b] \in W(\A \,\mathbb{Z} Q)^{s+3} \subseteq W_0.
	$$
	Note that $W_0$ is a $\mathbb{Z} Q$-subquotient of $M$ and
	$M$ depends only on the metabelian group $G/ G''$. Karl Gruenberg \cite{gruen} proved that every finitely generated soluble $n$-Engel group is nilpotent, in particular $G/ G''$ is nilpotent, say of class $d$. It follows
	that $Q$ acts nilpotently on $A = G'/ G''$, more specifically $A ( \A \mathbb{Z} Q)^d = 0$. 
	From this it follows that  $Q$ acts nilpotently on $M = (A \otimes A)_{Q_0}$, 
	indeed  writing $\overline{a_1 \otimes a_2}$ for the image of $a_1 \otimes a_2 \in A \otimes A$ in $M$, for all
	$q_1, \ldots, q_d\in Q$ and $a_1,a_2 \in A$ we see that
	$$
	\overline{a_1 \otimes a_2}\,  (q_1-1)(q_2-1) \ldots (q_d - 1) = \overline{ a_1 (q_1-1) (q_2-1) \ldots (q_d-1) \otimes a_2}
	$$ 
	belongs to the image of $A ( \A \mathbb{Z} Q)^d \otimes A$ in $M$, which is trivial.
	Thus  $M ( \A\, \mathbb{Z} Q)^d = 0$, whence $ W_0 ( \A\, \mathbb{Z} Q)^d = 0$ and
	$$
	\gamma_{n+3+s+d}(a,b) \in W_0 ( \A\, \mathbb{Z} Q)^d = 0.
	$$
\end{proof}

\section{On the group ${\mathcal E}(G)$ }

Following Lima and Sidki \cite{LS}, define
$$
{\mathcal{E}}(G) = \langle G, \-G \mid [{\mathcal D}, {\mathcal L}] = 1 \rangle,
$$
where
$$
\mathcal{D} = {\mathcal{D}}(G) = [G,\-G], \ {\mathcal L} = {\mathcal L}(G) = \langle \{ g^{-1} \-g \mid g \in G \} \rangle, \ {\mathcal W} = {\mathcal W}(G) = {\mathcal D} \cap {\mathcal L}.$$
Each of these groups is normal in ${\mathcal E}(G)$ -- see \cite{LS}.
Consider the natural epimorphism
\begin{equation} \label{theta}
\theta : {\mathcal E}(G) \to \X(G),
\end{equation}
which restricts to the identity on $G \cup \-G$.
Then $D = D(G) = \theta({\mathcal D})$ and $L = L(G) = \theta(\mathcal{L})$, where $D$ and $L$ are as defined in introduction.
Note that ${\mathcal W} = \theta^{-1}(W(G))$ is central in $ {\mathcal D}{\mathcal L}$, and
$$
{\mathcal E}(G) / {\mathcal D}{\mathcal L} \cong \X(G)/ DL \cong G/G'
.$$

Kochloukova \cite{desi-EG} proved that the circumstances under which   ${\mathcal E}(G)$ is finitely presented are much more restricted
than for $\X(G)$.

\begin{theorem}[{\cite{desi-EG}}]\label{Ethm} ${\mathcal E}(G)$ is finitely presented if and only if $G$ is finitely presented and $G / G'$ is finite. In this case  ${\mathcal W}$  and ${\mathcal L}/ {\mathcal L}'$ are finitely generated.
\end{theorem}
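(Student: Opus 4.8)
The plan is to prove the two implications separately. The ``if'' direction can be established with the structure recalled in Section~\ref{s:structure} together with the five‑term homology argument behind Lemma~\ref{nilpotent12}, and it simultaneously produces the finite generation of ${\mathcal W}$ and of ${\mathcal L}/{\mathcal L}'$. In the ``only if'' direction, that $G$ is finitely presented is easy; that $G/G'$ is finite is the genuinely hard point, and it is the one that forces one back to the homological‑finiteness analysis of \cite{desi-EG}.

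Assume first that $G$ is finitely presented and $Q:=G/G'$ is finite, and set $\lambda_g=g^{-1}\-g\in{\mathcal L}$, the ${\mathcal E}(G)$‑analogue of $\l_g$. The computation in (\ref{Lnormal}), whose derivation uses only that $g\mapsto\-g$ is a homomorphism, shows that ${\mathcal L}$ is generated --- not merely normally generated --- by $\{\lambda_g:g\in G\}$. Since $[{\mathcal D},{\mathcal L}]=1$ makes ${\mathcal D}=[G,\-G]$ act trivially on ${\mathcal L}$, the conjugation action of ${\mathcal E}(G)$ on ${\mathcal L}/{\mathcal L}'$ factors through ${\mathcal E}(G)/{\mathcal D}{\mathcal L}\cong Q$. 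Reducing the identity $\lambda_{ux}=\lambda_u^{\,x}\lambda_x$ modulo ${\mathcal L}'$ and inducting on word length shows that ${\mathcal L}/{\mathcal L}'$ is generated, as a $\Z Q$‑module, by the images of $\lambda_{g_1},\dots,\lambda_{g_n}$ for any finite generating set $g_1,\dots,g_n$ of $G$. As $Q$ is finite, $\Z Q$ is a finitely generated abelian group, and therefore so is ${\mathcal L}/{\mathcal L}'$.

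To see that ${\mathcal W}$ is finitely generated I would run the argument of Lemma~\ref{nilpotent12} for ${\mathcal E}(G)$: since ${\mathcal W}$ is central in ${\mathcal D}{\mathcal L}\supseteq{\mathcal L}$, the five‑term sequence of $1\to{\mathcal W}\to{\mathcal L}\to{\mathcal L}/{\mathcal W}\to1$ exhibits ${\mathcal W}\cap{\mathcal L}'$ as a quotient of $H_2({\mathcal L}/{\mathcal W},\Z)$. The epimorphism $\theta$ of (\ref{theta}) has kernel contained in ${\mathcal W}$, so it identifies ${\mathcal L}/{\mathcal W}$ with $L(G)/W(G)\cong S=\{(g_1,g_2):g_1g_2\in G'\}\le G\times G$; with $Q$ finite, $S$ has finite index in $G\times G$ and is hence finitely presented, so $H_2({\mathcal L}/{\mathcal W},\Z)$ and therefore ${\mathcal W}\cap{\mathcal L}'$ are finitely generated. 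As ${\mathcal W}/({\mathcal W}\cap{\mathcal L}')$ embeds in the finitely generated abelian group ${\mathcal L}/{\mathcal L}'$, the group ${\mathcal W}$ is finitely generated. Finally $\theta$ identifies ${\mathcal E}(G)/{\mathcal W}$ with $\X(G)/W(G)=\imm(\rho)\le G\times G\times G$, which is finitely presented --- it has finite index in $G\times G\times G$ when $Q$ is finite, and in any case is finitely presented by the Virtual Surjection to Pairs theorem \cite{BHMS} because $G$ is finitely presented and $\imm(\rho)$ surjects to each pair of coordinates. An extension $1\to{\mathcal W}\to{\mathcal E}(G)\to\imm(\rho)\to1$ with $\imm(\rho)$ finitely presented and ${\mathcal W}$ finitely generated abelian is itself finitely presented, so ${\mathcal E}(G)$ is finitely presented; this also proves the final assertion of the theorem.

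For the converse, $G$ is a retract of ${\mathcal E}(G)$ via the map fixing $G$ and sending each $\-g$ to $g$ (it kills ${\mathcal L}$ and carries ${\mathcal D}$ into $G'$, so it respects $[{\mathcal D},{\mathcal L}]=1$), and retracts of finitely presented groups are finitely presented; thus $G$ is finitely presented. The substantive claim is that $Q$ must be finite. I would first record that ${\mathcal L}/{\mathcal L}'$ always surjects onto the augmentation ideal $\A(\Z Q)$: writing $[g]$ for the image of $g\in G$ in $Q$, the assignments $g\mapsto(0,[g])$ and $\-g\mapsto([g]-1,[g])$ define a homomorphism ${\mathcal E}(G)\to\A(\Z Q)\rtimes Q$ --- it respects $[{\mathcal D},{\mathcal L}]=1$ because the images of ${\mathcal D}$ and of ${\mathcal L}$ both lie in the abelian normal subgroup $\A(\Z Q)\times1$ --- and it carries ${\mathcal L}$ onto $\A(\Z Q)$ via $\lambda_g\mapsto1-[g]^{-1}$. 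Because $\A(\Z Q)$ is finitely generated as an abelian group exactly when $Q$ is finite, it is enough to show that ${\mathcal E}(G)$ finitely presented forces ${\mathcal L}/{\mathcal L}'$ to be finitely generated as an abelian group. This is the main obstacle, and it is not formal: the kernel of $F_2\to\Z^2$ shows that the abelianised kernel of a surjection between finitely presented groups can fail to be finitely generated, so one must exploit the extra structure of ${\mathcal E}(G)$ --- the decomposition ${\mathcal E}(G)={\mathcal L}\rtimes G$ with $G$ acting on ${\mathcal L}/{\mathcal L}'$ through the abelian group $Q$, the centrality of ${\mathcal W}$ in ${\mathcal D}{\mathcal L}$, and the identification of ${\mathcal L}/{\mathcal W}$ with a subdirect product of finite corank in $G\times G$ --- in the homological‑finiteness argument of \cite{desi-EG}. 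I expect this step to be, by a wide margin, the hardest part of the proof; the rest is bookkeeping with the subgroups recalled in Section~\ref{s:structure}.
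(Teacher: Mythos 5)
Your ``if'' direction is essentially correct and self-contained: the identity $\lambda_{ux}=\lambda_u^x\lambda_x$ does hold in ${\mathcal E}(G)$, the conjugation action on ${\mathcal L}/{\mathcal L}'$ does factor through $Q={\mathcal E}(G)/{\mathcal D}{\mathcal L}$, so for $Q$ finite ${\mathcal L}/{\mathcal L}'$ is finitely generated; since $\ker\theta\subseteq{\mathcal W}$ one has ${\mathcal L}/{\mathcal W}\cong L/W\cong S$, of finite index in $G\times G$, so the five-term sequence gives ${\mathcal W}\cap{\mathcal L}'$ finitely generated and hence ${\mathcal W}$ finitely generated abelian; and then ${\mathcal E}(G)$ is an extension of the finitely presented group $\imm(\rho)$ by a finitely generated abelian (hence finitely presented) group, so is finitely presented. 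The easy half of the converse (retraction onto $G$) and the reduction of ``$Q$ finite'' to ``${\mathcal L}/{\mathcal L}'$ finitely generated'' via the homomorphism ${\mathcal E}(G)\to\A(\Z Q)\rtimes Q$ are also fine.

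The genuine gap is exactly where you flag it: you never prove that finite presentability of ${\mathcal E}(G)$ forces ${\mathcal L}/{\mathcal L}'$ (equivalently $\A(\Z Q)$, equivalently $Q$) to be finite(ly generated); you reduce the converse to this sub-claim and then defer to ``the homological-finiteness argument of \cite{desi-EG}''. That implication is the whole content of the theorem in the difficult direction -- as you yourself observe, it cannot be formal, since finite presentability is not inherited by quotients and the existence of the non-finitely-presented quotient $\A(\Z Q)\rtimes Q$ for $Q$ infinite proves nothing on its own; one needs a genuine homological argument (in \cite{desi-EG} this is carried out via an analysis of finiteness properties of ${\mathcal E}(G)$, not by bookkeeping with the subgroups of Section \ref{s:structure}). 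Note also that the present paper does not prove this statement either: it is quoted verbatim from \cite{desi-EG}, so the missing step in your proposal is precisely the cited proof. As it stands, then, your write-up establishes the ``if'' direction and the finite generation of ${\mathcal W}$ and ${\mathcal L}/{\mathcal L}'$, but not the assertion that ${\mathcal E}(G)$ finitely presented implies $G/G'$ finite.
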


\subsection*{Proof of Proposition \ref{p:EG}}
By Theorem \ref{Ethm}, if ${\mathcal E}(G)$ is finitely presented then $G / G'$ is finite, so $H:={\mathcal D}{\mathcal L}$
has finite index in ${\mathcal E}(G)$; in particular, $H$ is finitely presented and its Dehn function is $\simeq$ equivalent to that
of ${\mathcal E}(G)$. Moreover,  ${\mathcal W}$ is central in $H$, and by   Theorem \ref{Ethm} ${\mathcal W}$ is finitely generated. 
Lemma \ref{central} tells us that $\delta_H(n)\preceq \delta_{H/\mathcal{W}}(n)^2$. 
But $H/\mathcal{W}\cong DL/W$ is isomorphic to the image of $DL<\X(G)$ under $\rho:\X(G)\to G\times G \times G$, which
is normal with abelian quotient. As $G/G'$ is finite, the image of $\rho$ has finite index in  $G\times G \times G$, and hence 
its Dehn function is $\simeq$ equivalent to $\max \{ n^2, \delta_G(n)\}$.

\begin{remark}
	If $G$ is infinite, hyperbolic and perfect, then by arguing as in the proof of Proposition A, one can improve the bound in Proposition \ref{p:EG} and
	show that in this case $\delta_{\mathcal{E}(G)}(n)\simeq n^2$.
\end{remark}

\section{Solubility of the word problem in $\X(G)$} \label{s:solvWP}

We have seen that bounding the complexity of the word problem in $\X(G)$ by
means of reasonably efficient isoperimetric inequalities is a subtle challenge;
in particular it depends on more than just the Dehn function of $G$. In contrast,
the following theorem
shows that the mere existence of a solution to the word problem 
depends only on the existence of such an algorithm in $G$.
The proof of this result shows, roughly speaking, that
the complexity of solving the word problem  in $\X(G)$ is bounded by the greater of the
complexity of the word problem in $G$ and the complexity of the word problem in
the metabelian group $W(G)\rtimes Q$, where the action of $Q=G/G'$ is induced by
the conjugation action of $G$ in $\X(G)$.

\begin{theorem}\label{t:solv}
Let $G$ be a finitely presented group. The word problem in $\X(G)$ is soluble if and
only if the word problem in $G$ is soluble.
\end{theorem}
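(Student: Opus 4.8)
The plan is to treat the two implications separately. One is immediate: $G$ is a retract of $\X(G)$ — the epimorphism $\X(G)\to G$ sending both $g$ and $\-g$ to $g$ is split by $G\hookrightarrow\X(G)$, as in the decomposition $\X(G)=L\rtimes G$ — and a retract of a finitely presented group with soluble word problem has soluble word problem, so solubility of the word problem in $\X(G)$ forces it in $G$. For the converse, assume the word problem in $G$ is soluble. Since $\X(G)$ is finitely presented, it suffices to exhibit an algorithm that, on input a word $w$ in the generators of $\X(G)$, decides whether $w=_{\X(G)}1$. First, compute the triple $\rho(w)\in G\times G\times G$ (three words in the generators of $G$) and, using the word problem in $G$, decide whether $\rho(w)=1$; if not, $w\ne1$. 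This reduces us to deciding, for $w$ with $\rho(w)=1$, whether the element of $W=\ker\rho$ that $w$ represents is trivial, and the strategy for this is to reduce it to the word problem in the metabelian group $N:=W\rtimes Q$, where $Q=\X(G)/DL\cong G/G'$ acts on $W$ by conjugation.

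Two inputs make this reduction work. The first is that $N$ has soluble word problem, which holds because $N$ is \emph{finitely generated}. Indeed, the structural analysis of Section \ref{s:structures} shows that $W$ is finitely generated as a $\Z Q$-module: the module $M=((G'/G'')\otimes_{\Z}(G'/G''))_{Q_0}$ is finitely generated over $\Z Q$ (because $G'/G''$ is finitely generated over $\Z Q$ and one is passing to $Q_0$-coinvariants), Proposition \ref{action} exhibits $H_2(L/W,\Z)$ as built from a subquotient of $M$ together with finitely many further pieces carrying the trivial $Q$-action, each of which is a finitely generated abelian group because $G$ is of type $\FP_2$ — this is where finite presentability of $G$ is used, via the Lyndon--Hochschild--Serre spectral sequence of $1\to G'\to G\to Q\to1$ — and since $\Z Q$ is Noetherian this forces $H_2(L/W,\Z)$, hence its quotient $W\cap L'$ (Lemma \ref{nilpotent12}), to be finitely generated over $\Z Q$, while $W/(W\cap L')$ embeds in the finitely generated abelian group $L/L'$. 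Finitely generated metabelian groups are residually finite and recursively presented, hence have soluble word problem. The second input is an \emph{effective} translation from elements of $W$, presented as words in the generators of $\X(G)$, to words in the generators of $N$: given $w$ with $\rho(w)=1$, a collection process based on the decomposition $\X(G)=L\rtimes G$ rewrites $w$ as $\ell\cdot g$ with $\ell$ a word in the finite generating set of $L$ and $g$ a word in the generators of $G$; since $w\in L$ one has $g=_G1$, so $w=_{\X(G)}\ell$; then, using the identities in (\ref{Lnormal}) and the finite generation of $L$ and of $W$ as a $\Z Q$-module, one rewrites $\ell$ in ``module coordinates'', that is, as a word in the generators of $N$ representing the same element of $W$. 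Running the decision procedure for $N$ on this word completes the algorithm; tracking the costs shows that its complexity is bounded by the maximum of the complexities of the word problems in $G$ and in $W\rtimes Q$.

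The hard part will be this last, effective step. Since $L$ and $DL$ are non-abelian, ``passing to module coordinates'' is not a formality: it requires the explicit finite presentation of $\X(G)$ from \cite{BK1}, repeated use of the word problem in $G$ to normalise the $G$-part produced by collection (and to recognise when intermediate words in the generators of $G\times G\times G$ lie in the subgroups $\imm(\rho)<G\times G\times G$, $S=L/W<G\times G$, and so on), and the quantitative control of $L/L'$ and of $W\cap L'$ obtained in Section \ref{s:structures}.
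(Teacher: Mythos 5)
Your overall architecture is the paper's: retract for the easy direction, use $\rho$ and the word problem in $G\times G\times G$ to certify when a word represents an element of $W$, then decide triviality inside the finitely generated metabelian group $W\rtimes Q$. But there are two genuine gaps. The first is exactly the step you flag as ``the hard part'' and then leave undone: converting a word already certified to lie in $W$ into a word in the generators of $W\rtimes Q$. You assert a ``collection process'' plus a passage to ``module coordinates'' via (\ref{Lnormal}) and suggest it needs the explicit presentation from \cite{BK1} and quantitative control from Section \ref{s:structures}; none of that is supplied, and in particular effectively rewriting an arbitrary element of $L$ in a finite generating set of $L$ is not a formality, so as written there is no algorithm. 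The paper's missing idea is that no effectivity is needed at all: choose the finite generating set $X=A\cup B\cup C$ with the finite set $A\subset W$ generating $W$ as a \emph{normal} subgroup (possible because $\X(G)$ and $\X(G)/W\cong{\imm}(\rho)$ are finitely presented), push the $A$-letters to the left by a free-group manipulation, and once the $G\times G\times G$-check guarantees the remaining word lies in $W$, a na\"ive enumeration of free-group identities must terminate with an expression of it as a product of conjugates of letters of $A$ modulo the relators; moreover, since $[L,W]=1$, the conjugators may be taken to be words in $C^{\pm1}$ alone, so the resulting word maps verbatim into $W\rtimes Q$. This recursive-enumeration trick, with termination guaranteed by the prior certification that the element lies in $W$, is what replaces the ``effective translation'' you could not produce.

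The second gap is your justification that $N=W\rtimes Q$ has soluble word problem: ``residually finite and recursively presented'' does \emph{not} imply solubility of the word problem --- the classical algorithm needs a finite presentation in order to verify that a candidate map to a finite group is a homomorphism, and there exist recursively presented residually finite groups with unsolvable word problem. The correct input, and the one the paper uses, is P.~Hall's theorem \cite{pHall1}, \cite{pHall2} that finitely generated metabelian groups are residually finite and finitely presented \emph{in the variety of metabelian groups}, which does suffice. Relatedly, your detour through Proposition \ref{action}, Lemma \ref{nilpotent12} and Noetherianity of $\Z Q$ to prove that $W$ is a finitely generated $\Z Q$-module is both more than is needed and under-justified (for instance, finite generation of the pieces coming from $H_0(Q_0,H_2(G',\Z))$ requires a further spectral-sequence argument you only gesture at); it is simpler, and sufficient, to note that $W$ is finitely generated as a normal subgroup because $\X(G)$ \cite{BK1} and ${\imm}(\rho)$ \cite{BHMS} are finitely presented, and that the conjugation action on $W$ factors through $Q$.
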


\begin{proof} {{ } Recall from \cite{BK1}, as in Section 1, that  if $G$ is finitely generated then $L$ is finitely generated and if $G$ is finitely presented then $\X(G)$ is finitely presented. Furthermore, if $G$ is finitely presented then by \cite{BHMS} $\imm(\rho) < G \times G \times G$ is finitely presented, so in particular $W$ is finitely generated as a normal subgroup of $\X(G)$.}

Any retract of a group with a soluble word problem has a soluble word
problem, so the real content of the theorem is the
``if" implication: we
assume that $G$ has a soluble word problem and must prove that $\X(G)$ does. 
Note that if $G$ has a soluble word problem, then so does $G\times G\times G$.

Once again we focus our attention on the subgroups
$W < L < \X(G)$ and the decomposition $\X(G)=L\rtimes G$ discussed in Section \ref{s:structure}, as well as the exact sequence 
$$
1\to W \to \X(G) \overset{\rho}\to G\times G \times G.
$$
We fix a finite generating set $X= A\cup B\cup C$ for $\X(G)$ with 
$A\subset W,\, B\subset L$
and $C\subset G$, where  $A$ generates $W$ as a normal subgroup,
$A\cup B$ generates $L$, and $C\subset G$ generates $G$.
We then fix a finite presentation $\< X\mid R\>$ for $\X(G)$.
We also fix a finite presentation $\<Y\mid S\>$ for $G\times G\times G$ 
where $X\subset Y$ generates the image of $\rho$ and 
$S$ includes both $R$ and $\{a : a\in A\}$.

Given a word $w$ in the free group on $X$, we determine whether or not it equals the
identity in $\X(G)$ by employing the following algorithm. First, working in the
free group $F=F(X)$, making repeated use of the identity $ua = au^a$, we
move all occurrences of letters $a_i\in A$ to the left, 
thus expressing $w\in F$ as a product $w=w_1w_2$, where $w_1$ is a word
in the letters $A$ and $w_2$ is a
product of conjugates of letters from $B\cup C$.

We can decide whether or not $w_2 =1$ in $\<Y\mid S\>$ using the
hypothesized solution to the word problem in $G\times G\times G$. If $w_2\neq 1$
 then $w\neq 1$ in $\X(G)$ and we are done. If 
$w_2 =1$ in $\<Y\mid S\>$, then the element of $\X(G)=\<X\mid R\>$
represented by $w_2$ lies in the kernel of $\rho$, which is $W$, and
therefore $w\in W$. We chose $A$ so that it generates $W$ as a normal
subgroup, so by searching na\"{i}vely through equalities in the free group $F$,
we will eventually find a product of conjugates of the letters $a\in A$
that equals $w_2$ in $\X(G)= \< X\mid R\>$; in other words, we find an equality
in $F$ of the from
$$
w_2 = \prod_{i=1}^N a_i^{u_i} \ \prod_{j=1}^M r_j^{\theta_j},
$$
where $a_i\in A^{\pm 1}$ and $r_j\in R^{\pm 1}$. Note that since $[L, W]=1$,
we may assume the conjugators $u_i$ are words in the letters $C^{\pm 1}$ alone
(since conjugation by $A\cup B$ has no effect in $W$). Define $w_2'$ to be the first
of the two products in this decomposition of $w_2$.

At this stage, we have transformed $w$ into a word $w':=w_1w_2'$  that represents
the same element of $W<\X(G)$ and is expressed in the free group $F(A\cup C)$
as a product 
$$w' = \prod_{i=1}^{N'} a_i^{v_i},$$
where $a_i\in A^{\pm 1}$ and the $v_i$ are words in the letters $C^{\pm 1}$. 
We must decide whether or not $w'=1$ in $W$.

The final key point to observe is that because the action of $G'$ by conjugation on 
$W <L$ in $\X(G)=L\rtimes G$ is trivial, the natural map $W\rtimes G\to W\rtimes (G/G')$
restricts to an injection on $W$. 
It follows that $w'=1$ in $W < \X(G)$
if and only if the image of $w'$ 
under the natural map $F(A\cup C)\twoheadrightarrow W\rtimes (G/G')$ is trivial. 
And we can decide if this last image is trivial because the word problem is
soluble in any finitely generated metabelian group -- this follows from
classical work of Philip Hall \cite{pHall1}, \cite{pHall2} who proved that finitely generated metabelian
groups are residually finite and finitely presented in the variety of 
metabelian groups. In fact, since
every finitely generated metabelian group
 can be embedded in a finitely presented metabelian group with
polynomial Dehn function \cite{kassabov}, such word problems lie in the complexity class {\rm{NP}}.
\end{proof}

\section{$\X$-Closure and Growth}\label{s:growth} \label{s:last}

In this section we explain how the structure that emerged in the proof of Theorem \ref{propB} leads to the closure criterion 
isolated in Theorem \ref{grw}, and we use this criterion to show that $\X$ preserves growth type (Corollary \ref{grw2}).

\subsection{Proof of Theorem \ref{grw}}
For any group $G$, we have surjections $\X(G)\twoheadrightarrow {\imm}(\rho_G)\twoheadrightarrow G$;
so if $\mathcal P$ is closed under quotients, then $\X(G)\in\mathcal{P}\Rightarrow {\imm}(\rho_G)\in\mathcal{P}$
and ${\imm}(\rho_G)\in\mathcal{P}\Rightarrow G\in \mathcal{P}$. We also have 
${\imm}(\rho_G) \le G\times G\times G$; so if $\mathcal{P}$ is closed under subgroups and 
finite direct products, then $G\in\mathcal{P}\Rightarrow {\imm}(\rho_G) \in \mathcal{P}$. With these
observations in hand, Theorem \ref{grw} and the variations on it stated in the introduction 
are immediate consequences of the following result. 

\begin{proposition} Let $G$ be a finitely generated group. If $G/G''$ is virtually nilpotent, then there is
a subgroup of finite index $H\le \X(G)$ that can be obtained from a finite-index subgroup of 
${\imm}(\rho_G)\le G\times G \times G$
by a finite sequence of extensions each of which has finite-abelian or finitely-generated-central kernel.
\end{proposition}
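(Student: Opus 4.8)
The plan is to use the structural results already established in the paper, namely Corollary~\ref{action21} together with the short exact sequence $1\to W\to \X(G)\xrightarrow{\rho} G\times G\times G$, whose image $\imm(\rho_G)$ is a virtually-filling subdirect product of $G\times G\times G$. The target subgroup $H$ will be $\pi^{-1}(Q_1)$, where $\pi\colon \X(G)\to \X(G)/DL\cong Q=G/G'$ is the canonical epimorphism and $Q_1\le Q$ is the finite-index subgroup supplied by Corollary~\ref{action21}; since $[Q:Q_1]<\infty$ we have $[\X(G):H]<\infty$.

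First I would record that, because $G/G''$ is finitely generated and virtually nilpotent, its subgroup $G'/G''$ is finitely generated, so $W=W(G)$ is finitely generated as an abelian group by Lemma~\ref{l:KS}. Next, Corollary~\ref{action21} gives a filtration $0=I_0<I_1<\dots<I_k=W$ by $\Z Q_1$-submodules on which $Q_1$ acts trivially on every infinite quotient $I_j/I_{j-1}$. Intersecting with $H$ (equivalently, regarding $W\le H$), this gives a chain of normal subgroups $1=I_0\lhd I_1\lhd\dots\lhd I_k=W\lhd H$, each $I_j$ normal in $H$ because it is a $\Z Q_1$-submodule and $H$ acts on $W$ through $Q_1$. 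Each successive quotient $I_j/I_{j-1}$ is a finitely generated abelian group on which $H$ acts trivially when it is infinite (so the extension $H/I_{j-1}\to H/I_j$ has finitely-generated-central kernel) and is finite when it is finite (so the kernel is finite abelian). Finally, $H/W$ has finite index in $\X(G)/W\cong\imm(\rho_G)$, so $H/W$ is itself a finite-index subgroup of a finite-index subgroup of $G\times G\times G$, i.e.\ $H/W$ is (isomorphic to) a finite-index subgroup of $\imm(\rho_G)$. Reading the tower $\imm(\text{something})\cong H/W = H/I_k,\ H/I_{k-1},\ \dots,\ H/I_0=H$ from the top down then exhibits $H$ as the result of finitely many extensions of a finite-index subgroup of $\imm(\rho_G)$, each with finite-abelian or finitely-generated-central kernel, as required.

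The one point that needs a little care — and the place I expect the only real friction — is the normality of the $I_j$ in $H$ (not merely in the preimage $T$ of $Q_1$ inside $\X(G)$, which is the same group; so this is purely bookkeeping) and, more substantively, the identification ``$H/W$ is a finite-index subgroup of $\imm(\rho_G)$'' up to the precise meaning of ``obtained from.'' Here one should be slightly careful that $\pi^{-1}(Q_1)$ maps onto a subgroup of $\imm(\rho_G)$ of index $[Q:Q_1]$ and that this subgroup is again virtually filling and virtually coabelian in $G\times G\times G$, which is automatic since finite-index subgroups inherit both properties. Everything else is a direct transcription of Corollary~\ref{action21} and Lemma~\ref{l:KS}, so no new ideas are needed beyond those proofs.
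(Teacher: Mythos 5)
Your proof is correct and follows essentially the same route as the paper: take $H=\pi^{-1}(Q_1)$ with $Q_1$ from Corollary~\ref{action21}, note $W$ is finitely generated via Lemma~\ref{l:KS}, and peel off the filtration of $W$ one layer at a time, each quotient being either finite abelian or finitely-generated central because $Q_1$ acts trivially on the infinite ones. The only cosmetic difference is that you spell out the normality of the $I_j$ in $H$ and the identification $H/W\hookrightarrow\imm(\rho_G)$, both of which the paper leaves implicit.
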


\begin{proof} As $G/G''$ is finitely generated and virtually nilpotent, the results in Section \ref{s:structures}
apply. In particular, Corollary \ref{action21} provides us with a subgroup $Q_1$ of finite index in 
$Q = \X(G)/ DL \cong G/ G '$  and  a finite filtration of $W$ by $\mathbb{Z} Q_1$-submodules 
 $$1 = W_0 \subseteq W_2 \subseteq \ldots \subseteq W_k = W$$
 such that $Q_1$ acts trivially on each of the 
quotients $W_i/ W_{i-1}$ that is infinite. Moreover, since $G'/G''\le G/G''$ is finitely generated, we know from
\cite{KochSidki} that each $W_i$ is finitely generated as an abelian group.
 
 Let $H\le\X(G)$ be the preimage of $Q_1$. This has finite index, so $H / W_k = H/ W$
 has finite index in $\imm(\rho_G)$. Consider the short exact sequence of groups
 $$1 \to W_k/ W_{k- 1} \to H/ W_{k-1} \to H/ W_k \to 1.$$
 There are two options: either $W_k/ W_{k-1}$ is finite or it is central in $H/ W_{k-1}$. 
 Repeating this argument with $k-1,\dots,0$ in place of $k$ completes the proof.
 \end{proof}
 
 \begin{remark}\label{r:vNilp}
 It is straightforward to verify that the class of finitely generated virtually nilpotent groups satisfies the version of 
 Theorem \ref{grw} in which only central extensions by finitely generated kernels are allowed. Thus we
 obtain a second proof of Theorem \ref{t:nilp} at the expense of using  a corollary of Theorem \ref{propB} in the new proof.  
 \end{remark}
 
 \subsection{Growth: Proof of Corollary \ref{grw2}}

Let $G$ be a group, let $S$ be a finite generating set for $G$ and let $d_S$ be the associated word metric.
The {\em{growth function}} ${\rm{vol}}_{G,S}$ counts the number of elements in balls about the identity in $G$:
$$
{\vol_{G,S}}(n) := |B_{G,S}(n)|,
$$
where $ B_{G,S}(n) =  \{g\in G \mid d_S(1,g)\le n\}$.

If there exist constants $C,\delta>0$ such that $\vol_{G,S}(n)\le C \, n^\delta$ for all $n>0$, then
$G$ is said to have {\em{polynomial growth}}. If $\lim_n {\vol_{G,S}}(n)^{1/n} >1$ then $G$
 has {\em{exponential growth}}, and if $\lim_n {\vol_{G,S}}(n)^{1/n} =1$
then $G$ has {\em sub-exponential growth}. If the growth of $G$ is sub-exponential but not polynomial, then $G$
is said to have {\em intermediate growth}.  It is easy to check that these growth types are
independent of the chosen generating set $S$. 

Our main interest in Theorem \ref{grw} lies in the following application, which is a restatement 
of Corollary \ref{grw2}. Note that item (2) of this proposition allows one to construct new groups of intermediate
growth $\X_n(G) = \X ( \X_{n-1}(G))$ starting from well-known examples such as the Grigorchuk group \cite{Gr}, 
 or the Gupta-Sidki groups \cite{G-S}. 

\begin{proposition} \label{p:grow}
Let $G$ be a finitely generated group. Then,
\begin{enumerate}
\item[$\bullet$]  $\X(G)$ has  polynomial growth if and only if $G$ has polynomial growth;
\item[$\bullet$] $\X(G)$ has subexponential growth if and only if $G$ has subexponential growth.
\end{enumerate}
\end{proposition}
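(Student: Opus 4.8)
The plan is to derive Proposition \ref{p:grow} from Theorem \ref{grw} by exhibiting suitable classes $\mathcal{P}$. Recall that growth type is a quasi-isometry invariant (indeed invariant under passage to finite-index subgroups and quotients by finite normal subgroups), and that one has the following standard facts: if $1\to N\to E\to Q\to 1$ is exact with $N$ central and finitely generated, then $\vol_E(n)\le \vol_N(n)\cdot\vol_Q(C n)$ up to the usual adjustments, since a word of length $n$ in $E$ maps to a word of length $n$ in $Q$ and the "correction" needed to reach the identity lies in the central subgroup $N\cong\mathbb{Z}^a\oplus(\text{finite})$ at distance at most linear in $n$. Conversely $Q$ is a quotient of $E$, so $\vol_Q\le\vol_E$, and $N$ is a subgroup whose growth is at most that of $E$. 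Consequently the class of finitely generated groups of subexponential growth is closed under finitely-generated central extensions, under extensions by and of finite groups, under finite direct products, and under finitely generated subgroups and quotients; and every metabelian group of subexponential growth is virtually nilpotent, since metabelian groups of exponential growth are exactly those that are not virtually nilpotent (a metabelian group that is not virtually nilpotent contains a free sub-semigroup by Milnor--Wolf, hence has exponential growth). This verifies both bullet points of the hypothesis of Theorem \ref{grw} (in the version using central extensions with finitely generated kernel, as permitted by the Proposition proved in \S\ref{s:growth}), so $\X(G)$ has subexponential growth iff $G$ does.

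For the polynomial-growth statement I would proceed identically, replacing "subexponential growth" by "polynomial growth": the class of finitely generated groups of polynomial growth is again closed under finitely-generated central extensions (if $\vol_N(n)\preceq n^c$ and $\vol_Q(n)\preceq n^{c'}$ then $\vol_E(n)\preceq n^{c+c'}$), under finite direct products, extensions by and of finite groups, subgroups and quotients; and by Gromov's theorem this class coincides with the class of virtually nilpotent groups, so in particular every metabelian group in it is virtually nilpotent. Hence Theorem \ref{grw} applies and gives $\X(G)$ of polynomial growth iff $G$ is. (Alternatively this is just Theorem \ref{t:nilp} combined with Gromov's theorem and its easy converse, as noted in the text; I would mention both routes.)

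Concretely, the proof I would write is short: first cite the Proposition of \S\ref{s:growth}, which gives a finite-index $H\le\X(G)$ built from a finite-index subgroup of $\imm(\rho_G)\le G\times G\times G$ by finitely many extensions with finite-abelian or finitely-generated-central kernel, under the hypothesis that $G/G''$ is virtually nilpotent; then observe that in both the polynomial and the subexponential case, "$G$ has the relevant growth type" already forces $G/G''$ to be virtually nilpotent, so the hypothesis is automatic. Then run the two closure arguments above to pass growth type from $\imm(\rho_G)$ (hence from $G$, using the sandwich $\X(G)\twoheadrightarrow\imm(\rho_G)\twoheadrightarrow G$ and $\imm(\rho_G)\le G^3$) up to $H$ and thence to $\X(G)$, and back down again.

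The only genuinely delicate point — the "main obstacle" — is the claim that the relevant growth hypothesis on $G$ (or on $\X(G)$) already implies $G/G''$ is virtually nilpotent, which is what licenses the use of the structural results of \S\ref{s:structures}. For polynomial growth this is immediate from Gromov, since $G$ itself is then virtually nilpotent and hence so is its quotient $G/G''$. For subexponential growth the point is that a finitely generated metabelian group is either virtually nilpotent or contains a non-abelian free sub-semigroup and so has exponential growth (Milnor--Wolf, Rosenblatt); since $G/G''$ is a quotient of $G$ it has subexponential growth if $G$ does, hence it is virtually nilpotent. In the direction starting from $\X(G)$ of subexponential growth, $G$ is a retract of $\X(G)$ so inherits subexponential growth, and then the same argument applies. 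Once this is in place, everything else is bookkeeping with the elementary growth estimates for central extensions and finite-index subgroups recalled above.
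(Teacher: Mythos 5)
Your overall strategy matches the paper's: apply Theorem \ref{grw} with $\mathcal{P}$ the class of finitely generated groups of subexponential growth (resp.\ polynomial growth), checking the closure properties, and noting that the Milnor--Wolf theorem handles the metabelian hypothesis; the polynomial case can also be dispatched directly via Gromov's theorem and Theorem \ref{t:nilp}, which is the route the paper takes.

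However, there is a genuine gap in your verification of the closure properties, and you have misidentified the delicate point. You assert as a ``standard fact'' that if $1\to N\to E\to Q\to 1$ is a central extension with $N$ finitely generated abelian, then $\vol_E(n)\le\vol_N(n)\cdot\vol_Q(Cn)$, on the grounds that the correction element of $N$ ``lies at distance at most linear in $n$.'' This is false. The distortion of a finitely generated central subgroup is not a priori linear: in the integer Heisenberg group $E=H_3(\mathbb{Z})$ with $N=Z(E)\cong\mathbb{Z}$ and $Q=\mathbb{Z}^2$, a word of length $n$ in $E$ lying over the identity in $Q$ can equal $z^{k}$ with $k\sim n^2$, so the correction has distance $\sim n^2$ in $N$; and indeed $\vol_E(n)\sim n^4$ while $\vol_N(n)\cdot\vol_Q(Cn)\sim n^3$, so your inequality fails (as does the parenthetical formula $n^{c+c'}$ in the polynomial case). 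The closure of the class of groups of subexponential growth under finitely generated central extensions is therefore not a formal consequence of the naive estimate; it is precisely the content of Lemma \ref{cent-subexp}, and the paper's proof of that lemma uses a blocking argument (cut a word of length $n$ into $\lceil n/N_\varepsilon\rceil$ blocks of bounded length so that each block's correction has bounded $d_T$-size, giving a product of corrections of $d_T$-size linear in $n$) that circumvents the distortion problem. In contrast, the point you flag as ``the only genuinely delicate point'' --- that the growth hypothesis forces $G/G''$ to be virtually nilpotent --- is routine (Gromov in the polynomial case, Milnor--Wolf plus passing to the quotient in the subexponential case) and is in any case absorbed into the hypotheses of Theorem \ref{grw} if you use that theorem as a black box. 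So the missing ingredient in your argument is a correct proof of closure under finitely generated central extensions; once Lemma \ref{cent-subexp} is supplied, the rest of your proposal goes through as written.
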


\begin{proof}
Gromov \cite{Gromov} proved that groups of polynomial growth are virtually nilpotent, and the converse is
straightforward. Thus the first part of this result is covered by Theorem \ref{t:nilp} (alternatively, 
remark \ref{r:vNilp}).

Let $\mathcal{P}$ be the class of finitely generated groups of subexponential growth. It is easy to see that 
$\mathcal{P}$ is closed under quotients, subgroups, extensions by and of finite groups, and
finite direct products.
And it is well-known that if a soluble group is not  virtually nilpotent, then it
has exponential growth \cite{Milnor}, \cite{Wolf}; so the metabelian groups in $\mathcal{P}$ are virtually nilpotent.
Thus, before we can apply Theorem K, it only remains to check the closure of $\mathcal{P}$  under central extensions, which is the content
of the lemma that follows.
\end{proof}

\begin{lemma} \label{cent-subexp}
Let $G$ be a finitely generated group and let $Z<G$ be a finitely generated central subgroup.
If $G/Z$ has subexponential growth, then so does $G$.
\end{lemma}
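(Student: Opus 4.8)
Let $G$ be a finitely generated group and let $Z < G$ be a finitely generated central subgroup. If $G/Z$ has subexponential growth, then so does $G$.

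The plan is to combine a quantitative version of Gromov's polynomial growth theorem with the dichotomy for finitely generated abelian groups. First I would observe that $Z$ is a finitely generated abelian group, so by the structure theorem $Z \cong \mathbb{Z}^r \times T$ with $T$ finite; replacing $G$ by the finite-index subgroup corresponding to removing the torsion (which does not change growth type, being a standard fact about finite-index subgroups), we may assume $Z \cong \mathbb{Z}^r$ is free abelian. The key geometric input I would use is the following: if $Z$ is undistorted in $G$ — meaning the inclusion $Z \hookrightarrow G$ is a quasi-isometric embedding — then by restricting the growth function one gets $\operatorname{vol}_{Z}(cn) \le \operatorname{vol}_{G,S}(n)$ for a suitable constant $c$, but that inequality goes the wrong way; the real point is that when $Z$ is \emph{distorted} in $G$ one can exploit this to show $G$ is virtually nilpotent and conclude directly. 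So I would split into two cases according to the distortion of $Z$.

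In the case where $Z$ is undistorted in $G$, I would argue that the quotient map $G \to G/Z$, together with the product structure, gives a bound $\operatorname{vol}_{G,S}(n) \le \operatorname{vol}_{Z}(Cn)\cdot \operatorname{vol}_{G/Z,\bar S}(n)$ for a suitable constant $C$: indeed, pick a set-theoretic section of $G \to G/Z$ realising geodesics up to bounded error, so that every element of $B_{G,S}(n)$ can be written as (lift of an element of $B_{G/Z}(n)$) times (an element of $Z$ of bounded length, namely $\le Cn$). Since $Z \cong \mathbb{Z}^r$ has polynomial growth $\operatorname{vol}_Z(Cn) \preceq n^r$, and $\operatorname{vol}_{G/Z}(n)$ is subexponential by hypothesis, the product is subexponential: for any $\varepsilon > 0$, eventually $\operatorname{vol}_{G/Z}(n) \le (1+\varepsilon)^n$, and $n^r \le (1+\varepsilon)^n$ eventually too, so $\operatorname{vol}_{G,S}(n) \le (1+\varepsilon)^{2n}$ eventually; letting $\varepsilon \to 0$ and using $\lim_n \operatorname{vol}_{G,S}(n)^{1/n} \ge 1$ always, we get the limit equals $1$. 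In the case where $Z$ is distorted in $G$, the distortion function of $Z$ in $G$ is super-linear; but $Z$ is central, and a classical fact (e.g.\ via the Milnor--Wolf / Gromov circle of ideas, or directly from Bass's work on distortion in nilpotent groups and its converse) is that a finitely generated group containing a distorted finitely generated central subgroup with virtually nilpotent quotient is itself virtually nilpotent — here $G/Z$ is virtually nilpotent because it has subexponential growth only if... wait, that is not given. So in the distorted case I would instead note that distortion of $Z$ forces $G/Z$ to contain, loosely, ``enough room'' to have exponential growth unless $G/Z$ is virtually nilpotent; cleaner is to bound the distortion: the distortion of a central $\mathbb{Z}^r$ is at most the growth of $G/Z$ in a suitable sense, and since the latter is subexponential, one recovers the undistorted-enough estimate $\operatorname{vol}_{G,S}(n) \le \operatorname{vol}_Z(\operatorname{dist}_Z^G(n)) \cdot \operatorname{vol}_{G/Z}(n) \preceq \operatorname{dist}_Z^G(n)^r \cdot \operatorname{vol}_{G/Z}(n)$, and one shows $\operatorname{dist}_Z^G(n) \preceq \operatorname{vol}_{G/Z}(2n)$ so the whole right-hand side stays subexponential.

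The main obstacle is pinning down the inequality $\operatorname{dist}_Z^G(n) \preceq \operatorname{vol}_{G/Z}(Cn)$ (or a suitable substitute) without circularity: one needs that an element $z \in Z$ which is short in $G$, say $d_{S}(1,z) \le n$, cannot be too long in $Z$, and the bound on its $Z$-length should come only from the geometry of $G/Z$. The clean way I expect to make this work is via the \emph{pigeonhole} principle: write $z$ as a word of length $\le n$ in $S$; its image in $G/Z$ traces a loop, and one can subdivide and use that if two initial segments of the word map to the same element of $B_{G/Z}(n)$ then the difference of the corresponding group elements lies in $Z$ and has controlled $S$-length — iterating, $z$ decomposes into at most $\operatorname{vol}_{G/Z}(n)$ pieces each lying in $Z$ and having $S$-length $O(1)$, so $z$ has $Z$-length $O(\operatorname{vol}_{G/Z}(n))$ because on $Z$ the $S$-metric and the intrinsic $Z$-metric are bi-Lipschitz on bounded balls in a uniform way — here one uses that $Z$ is finitely generated abelian, hence the $S$-length and any word-length on $Z$ are comparable up to a multiplicative constant \emph{after} passing through the fixed finite set $Z \cap B_{G,S}(C)$. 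Granting this, everything assembles into $\operatorname{vol}_{G,S}(n) \preceq \operatorname{vol}_{G/Z}(Cn)^{r+1}$, which is subexponential whenever $\operatorname{vol}_{G/Z}$ is, completing the proof.
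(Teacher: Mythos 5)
Your counting reduction is fine as far as it goes: choosing for each element of the ball of radius $n$ in $G/Z$ a lift of $S$-length at most $n$, every $g\in B_{G,S}(n)$ is (lift)$\cdot z$ with $z\in Z$ of $S$-length at most $2n$, whence $\mathrm{vol}_{G,S}(n)\le \mathrm{vol}_{G/Z}(n)\cdot \mathrm{vol}_{Z,T}\bigl(\mathrm{dist}_Z^G(2n)\bigr)$. The genuine gap is the key claim $\mathrm{dist}_Z^G(n)\preceq \mathrm{vol}_{G/Z}(Cn)$, which carries the whole proof and which your pigeonhole sketch does not deliver. If $z\in Z$ is represented by a word $w$ of length $n$ whose image path in the Cayley graph of $G/Z$ is an embedded loop, then no two proper prefixes of $w$ have the same image, so there is no sub-loop to excise: the only ``piece'' is $w$ itself, of $S$-length $n$, not $O(1)$. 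In general the excised subwords are elements of $Z$ whose $S$-length is only bounded by their length as subwords, and converting $S$-length of elements of $Z$ into $T$-length is precisely the distortion function you are trying to bound, so the argument is circular. Moreover there is no a priori reason why the distortion of a central subgroup should be controlled by the growth of the quotient at all: the natural control on central distortion comes from filling (Dehn-function-type) invariants of $G/Z$, which require finite presentability (not assumed here, and not available for the intended applications, e.g.\ groups of intermediate growth) and can vastly exceed growth. Your earlier case division by distortion, and the reduction to $Z\cong\mathbb{Z}^r$ ``by passing to a finite-index subgroup'', are also shaky (such a subgroup need not exist; one should quotient by the finite torsion subgroup of $Z$, which is central hence normal -- but this reduction is unnecessary, since any finitely generated abelian $Z$ has polynomial growth).

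The paper's proof avoids distortion entirely, and this is the idea you are missing. Given $\e>0$, fix $N_\e$ with $\mathrm{vol}_{G/Z}(m)\le e^{\e m}$ for $m\ge N_\e$; after using centrality of the generators of $Z$ to reduce to the subgroup $H$ generated by the remaining generators $Y$, cut a $Y$-word of length $n$ into $k=\lceil n/N_\e\rceil$ blocks of length at most $N_\e$, and write each block as a chosen lift of its image in $G/Z$ times a central correction. Because each block has length at most $N_\e$, every correction has $T$-length at most a constant $\lambda_\e$ depending only on $\e$; centrality lets one gather all the corrections into a single element of $Z$. One then merely counts values: at most $\mathrm{vol}_{G/Z}(N_\e)^k\le e^{\e kN_\e}$ possibilities for the product of the lifts, and at most $C(k\lambda_\e)^{\delta}$ possibilities for the product of the corrections (polynomial growth of the finitely generated abelian group $Z$), giving $\limsup_n \frac{1}{n}\log \mathrm{vol}_{H,Y}(n)\le\e$ for every $\e>0$. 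The $\e$-dependent block size, which makes the correction terms of \emph{bounded} $T$-length, is exactly what replaces the unproved (and, in this generality, unjustifiable) distortion inequality in your proposal.
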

 
 \begin{proof}   
 We fix a generating set $T=\{z_1,\dots,z_m\}$ for $Z$ and extend this to 
 a generating set $S=\{z_1,\dots,z_m, y_1,\dots, y_r\}$ for $G$. Let $Y=\{y_1,\dots,y_r\}$, let $H=\<Y\>$,
 and note that $HZ=G$. We write $\-{h}$ for the image of $h\in H$ in $\-{H}:=G/Z$.
 Note that $\-{H}$ is generated by $\-{Y}=\{\-{y}_1,\dots,\-{y}_r\}$.

As $d_S(1,hz) \le d_Y(1,h)+ d_T(1,z)$ for all $h\in H,\, z\in Z$, 
 $$ B_{G,S}(n) \subseteq B_{H,Y}(n) B_{Z,T}(n).$$
As $Z$ is abelian, $|B_{Z,T}(n)|\le C n^\delta$ for some constant $C$, where $\delta$ is the torsion-free rank of $Z$.
Thus  it suffices to prove that $H$ has sub-exponential growth.

 We define
 a set-theoretic section of $H\to\-{H}$ by choosing 
 coset representatives $\sigma(\-{h})\in H$ for $Z$ in $HZ$.  
 As $\-{H}$ has sub-exponential growth, 
 for every $\e>0$ there exists $N_\e>0$ such that ${\vol_{\-{H},\-{Y}}}(n) \le e^{\e n}$ for all
 $n\ge N_\e$.  Let $\zeta_h:=h\sigma(\-{h})^{-1}\in Z$  
 and let
$$\lambda_\e = \max\{  d_T(1,\zeta_h) \mid h\in H,\, d_Y(1,h)\le N_\e\} .$$

 For all integers $n>0$ and all $h\in H$ with $d_Y(1,h)\le n$,  if $k=\lceil n/N_\e\rceil$ then
 $h= h_1\dots h_k$ for some $h_i\in   B_{H,Y}(N_\e)$, therefore
 $$
 h = \prod_{i=1}^k \zeta_{h_i} \sigma(\-{h}_i) =  \prod_{i=1}^k \zeta_{h_i} \ \prod_{i=1}^k \sigma(\-{h}_i).
 $$
 There are at most $|B_{\-{H},\-{Y}}( N_\e)|^{k}$ possible values for the product of the $\sigma(\-{h}_i) $ and
 at most $C (k \lambda_\e)^\delta$ values for the product of the $\zeta_{h_i}\in Z$. And
 $$|B_{\-{H},\-{Y}}(N_\e)|^{k} =  {\vol_{\-{H},\-{Y}}}(N_\e)^k  \le e^{\e\, kN_\e} .$$
Therefore,
 $$|B_{H,Y}(n)|  \le   e^{\e\, kN_\e}\  C (k \lambda_\e)^\delta.$$
 Noting that $kN_\e < n+N_\e$, we deduce that
 $$
 \lim_{n\to\infty} \frac{1}{n} \log |B_{H,Y}(n)|   \le \e.
 $$
 And since $\e>0$ is arbitrary, we conclude that $\lim_n ({\vol_{H,Y}}(n))^{1/n}=1$, as required.
 \end{proof}

 \begin{remark}
 Tianyi Zheng \cite[Lemma 1.1]{zheng} proved that 
the preceding  lemma remains true even if  $Z$ is not finitely generated. 
 \end{remark}

 \subsection{The fixed point property for cones}

We close with a brief discussion of two  classes of groups  related to sub-exponential growth and amenability. 
We shall be concerned only with finitely generated groups, although the theory is more general.
Monod \cite{Monod} defines a group to have {\em the fixed point property for cones} if
actions of $G$  on convex cones in topological vector spaces must, under specified mild hypotheses, have fixed points. 
He proves that it is equivalent to require that for every non-zero  bounded function  $f$ on $G$ 
and all $t_i\in \mathbb{R}, \ g_i\in G$, if $f\ge 0$ then 
$$
\sum_{i=1}^n t_ig_if \ge 0 \implies \sum_{i=1}^n t_i \ge 0.
$$  
Following Rosenblatt \cite{rosen},  one says that a group $G$ is {\em supramenable} if  for every non-empty 
$A \subset G$ there is an invariant, finitely additive measure $\mu$ on $G$ such that $\mu(A) = 1$.
Kellerhals, Monod and  R\"{o}rdam, proved that $G$ is supramenable if and only if the rooted binary tree cannot be Lipschitz-embedded in
$G$  -- see \cite[Prop. 3.4]{K-M-R}.  Rosenblatt \cite{rosen}, had earlier proved that a supramenable group cannot contain
a non-abelian free semigroup.

Groups of sub-exponential growth have the  fixed point property for cones, and if a group has the fixed point property for
cones then it is supramenable \cite{Monod}. It is unknown if these implications can be reversed for finitely
generated groups. It is also unknown whether either of the 
latter classes is closed under the formation of finite direct products.

 \begin{proposition} 
 Let $G$ be a finitely generated group.  $\X(G)$ has the fixed point property
 for cones 
 if and only if ${\imm}(\rho_G)=\{(g_1,g_2,g_3) \mid g_1g_2^{-1}g_3\in [G,G]\}<G\times G\times G$ 
 has the same property.
 \end{proposition}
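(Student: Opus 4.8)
The plan is to recognise this statement as an instance of the variation of Theorem \ref{grw} recorded in the introduction: for a class $\mathcal{P}$ of finitely generated groups that satisfies the hypotheses of that theorem \emph{except} for closure under finite direct products, one still has $\X(G)\in\mathcal{P}$ if and only if ${\imm}(\rho_G)\in\mathcal{P}$. I would apply this with $\mathcal{P}$ equal to the class of finitely generated groups that have the fixed point property for cones. Two things then have to be checked: that every metabelian group in $\mathcal{P}$ is virtually nilpotent, and that $\mathcal{P}$ enjoys the relevant closure properties.

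The metabelian condition is the only genuinely new point, and it is short. By Monod \cite{Monod}, the fixed point property for cones implies supramenability, and by Rosenblatt \cite{rosen} a supramenable group contains no non-abelian free sub-semigroup. Since a finitely generated solvable group that is not virtually nilpotent does contain such a sub-semigroup (Rosenblatt \cite{rosen}; see also \cite{Milnor}, \cite{Wolf}), every solvable group in $\mathcal{P}$ -- in particular every metabelian one -- is virtually nilpotent.

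For the remaining hypotheses I would invoke Monod's results (and the correspondence acknowledged above): $\mathcal{P}$ is closed under passage to finitely generated subgroups, under quotients, under extensions by and of finite groups, and under central extensions with finitely generated kernel; by the remark in the introduction, closure under central extensions with \emph{finitely generated} kernel is all that is needed. Granting these, the proof follows that of Theorem \ref{grw}. The ``only if'' direction is immediate, since ${\imm}(\rho_G)$ is a quotient of $\X(G)$. For the ``if'' direction, observe that $G/G''$ is a quotient of ${\imm}(\rho_G)$, hence lies in $\mathcal{P}$ and is therefore virtually nilpotent; so the structural result proved en route to Theorem \ref{grw} applies and produces a finite-index subgroup $H\leq\X(G)$ built from a finite-index subgroup of ${\imm}(\rho_G)\leq G\times G\times G$ by a finite chain of extensions each having finite-abelian or finitely-generated-central kernel. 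The base group of this chain lies in $\mathcal{P}$ by closure under finitely generated (in fact finite-index) subgroups, the listed closure properties then carry $\mathcal{P}$ up the chain to put $H\in\mathcal{P}$, and finally $\X(G)\in\mathcal{P}$ because it is a finite-index overgroup of $H$, which is a case of closure under extensions by finite groups.

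The substantive difficulty lies entirely in the cited input, not in assembling it: unlike amenability or supramenability, the fixed point property for cones is not transparently preserved by central extensions, and it is the closure of $\mathcal{P}$ under central extensions with finitely generated kernel -- more precisely, under the iterated finite-abelian and finitely-generated-central extensions that occur in the structural result above -- that is the delicate ingredient, for which I would rely on \cite{Monod}.
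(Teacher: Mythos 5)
Your proposal is correct and follows essentially the same route as the paper: it invokes the direct-product-free variant of Theorem \ref{grw}, cites Monod for closure under subgroups, quotients, central extensions, and extensions by and of finite groups, and handles the metabelian case via supramenability, Rosenblatt's free sub-semigroup criterion, and Milnor--Wolf. No gaps.
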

 
 \begin{proof} It suffices to prove that the class of groups with the fixed point property for cones 
 satisfies each of the criteria in Theorem \ref{grw} except closure under direct products.
 Closure
 under the operations of forming
 subgroups,  quotients and  central extensions,  as well as extensions by and of finite groups is proved by Monod in \cite{Monod}.
 Thus it only remains to  show that a finitely generated metabelian group $G$ with the fixed-point property for cones is virtually nilpotent. 
 As discussed above, $G$ {{ } is }supramenable, so it cannot contain a non-abelian free semigroup. 
Rosenblatt \cite{rosen} proved that a finitely generated solvable group without such a semigroup must have polynomial growth,
and it is therefore virtually nilpotent \cite{Milnor}, \cite{Wolf}.
\end{proof}

\begin{remark} If the class $\mathcal P$ of finitely generated groups with the fixed-point property for cones were closed under the taking of finite direct products, we would be able to reverse the implication  ${\imm} (\rho_G) \in {\mathcal P}\implies G \in {\mathcal P}$.
\end{remark}


\begin{thebibliography}{99}
	
	
	\bibitem{AO} 
	Arzhantseva, G.N.; Osin, D.V.,
	{\em Solvable groups with polynomial Dehn functions}, Trans. Amer. Math. Soc. {\bf 354} (2002),   3329--3348. 
	
	
	
	\bibitem{B-B-M} Baumslag, G.; Bridson, M.R.; Miller III, C.F.,
	{\em Isoperimetric and isodiametric functions of group extensions}, in
	``Elementary Theory of Groups and Group Rings, and Related Topics" (P.~Baginski {\em et al.}, eds.), De Gruyter, Berlin, 2020,
	pp.~7--10. 
	
	\bibitem{bieri} Bieri, R., 
	{\em Homological dimension of discrete groups}, Second edition. Queen Mary College Mathematical Notes. Queen Mary College, Department of Pure Mathematics, London, 1981.
	
	
	
	\bibitem{bfs} Bridson, M.R.,  
	{\em The geometry of the word problem}, 
	in ``Invitations to geometry and topology",
	(M.R. Bridson and S.M. Salamon, eds.), OUP, Oxford 2002,
	pp.~29--91.
	
	
	
	\bibitem{mrb:camb} Bridson, M.R., {\em Semihyperbolicity},  in ``Beyond Hyperbolicity" (M.~Hagen, R.~Webb, H.~Wilton, eds.), London Math Soc Lecture Note Series {\bf 454}, Camb. Univ. Press, Cambridge, 2019, pp.~25--64.
	
	
	
	
	\bibitem{BridsonGersten}  Bridson, M.R.; Gersten, S.M., 
	{\em The optimal isoperimetric inequality for torus bundles over the circle},
	Quart. J. Math. Oxford Ser. (2) {\bf 47} (1996), no. 185, 1--23.
	
	
	
	\bibitem{BH}
	Bridson M.R.; Haefliger, A.,
	``Metric Spaces of Non-Positive Curvature", Grund. Math. Wiss.
	{\bf 319}, Springer-Verlag, Heidelberg-Berlin, 1999.
	
	
	
	\bibitem{BHMS} Bridson, M.R.; Howie, J.; Miller III, C.F.; Short, H.,
	{\em On the finite presentation of subdirect products and the nature of residually free groups}, Amer. J. Math. {\bf 135} (4) (2013), 891 -- 933.
	

\bibitem{kassabov} Bridson, M.R.; Kassabov, M.;
Kochloukova, D.H.; Matuccci, F, {\em Isoperimetric functions and 
embedding properties of metabelian groups}, in preparation.	
	
	\bibitem{BK1} Bridson, M.R.; Kochloukova, D., {\em Weak commutativity 
		and finiteness properties of groups}, Bull. London Math. Soc. {\bf{51}} (2019), 168--180.
	
	
	\bibitem{brown} Brown, K.S. {\em Cohomology of groups}, Graduate Texts in Mathematics, 87. Springer-Verlag, New York, 1994
	
	
	
	\bibitem{B-G} Brown,  K.S.;  Geoghegan, R., {\em An infinite-dimensional torsion-free ${\rm FP}\sb{\infty }$ group}, Invent. Math. {\bf 77}(1984),   367--381.
	 
	
	
	\bibitem{delz} Delzant, T.,
	{\em L'invariant de Bieri-Neumann-Strebel des groupes fondamentaux des vari\'{e}t\'{e}s k\"{a}hl\'{e}riennes,}
	Math. Ann. {\bf{348}} (2010),   119--125. 
	
	\bibitem{dison} Dison, W., {\em Isoperimetric functions for subdirect products and Bestvina-Brady groups}, PhD thesis, Imperial College London, 2008. arXiv:0810.4060
	
	
	\bibitem{epstein} Epstein, D.B.A.; Cannon, J.W.; Holt, D.F.; Levy, S.V.F.; Paterson, M.S.; Thurston, W.P.,
	{\em Word processing in groups}, Jones and Bartlett Publishers, Boston, MA, 1992.
	
	\bibitem{ershov} Ershov, M.; He, S., {\em On finiteness properties of the Johnson filtrations},
	Duke Math. J. {\bf 167} (2018),  1713--1759.  
	
	
	\bibitem{gersten} 
	Gersten, S.M.,  
	{\em Isoperimetric and Isodiametric Functions of Finite Presentations},
	In  ``Geometric Group Theory" (G. Niblo and M. Roller, Eds.), London Math. Soc. Lecture Notes {\bf 181}, 1983, Cambridge Univ. Press, Cambridge, pp. 79--96.
	
	
	
	\bibitem{GS} Gersten, S.M.; Short, H.B., {\em Rational subgroups of biautomatic groups},
	Ann. of Math. {\bf{134}} (1991),  125--158. 
	
	\bibitem{Gr} Grigorchuk, R. I., {\em On the Milnor problem of group growth,} Soviet Math. Dokl. {\bf 28} (1983), 23 -- 26.
	
	 
	
	\bibitem{Gromov} Gromov, M., {\em Groups of polynomial growth and expanding maps,} Publ. Math. IHES {\bf 53} (1981), 53--78. 
	
	\bibitem{gruen} Gruenberg K.W., {\em Two theorems on Engel groups},
	Proc. Camb. Phil. Soc. {\bf 49} (1953), 377--380.
	
	\bibitem{Guba} Guba, V.S., {\em The Dehn function of Richard Thompson's group F is quadratic}, Invent. Math. {\bf 163} (2006),   313--342.
	
	
	\bibitem{GubaSapir} Guba, V.S.; Sapir, M.V., {\em The Dehn function and a regular set of normal forms for R. Thompson's group F},
	J. Austral. Math. Soc. Ser. A {\bf 62} (1997), 315--328.
	
	\bibitem{G-S} Gupta, N., Sidki, S. {\em On the Burnside problem for periodicgroups,} Math. Z., {\bf 182} (1983), 385--388.
	
	\bibitem{G-R-S}
	Gupta, N.; Rocco, N.; Sidki, S., {\em Diagonal embeddings of nilpotent groups},
	Illinois J. Math. {\bf 30} (1986),  274--283.
	
	\bibitem{Hall} Hall, M. Jr., {\em Subgroups of finite index in free groups}, Canad. J. Math. {\bf 1} (1949), 187--190.
	
	\bibitem{pHall1}
Hall, P., {\em Finiteness conditions for soluble groups}, Proc. London Math. Soc. {\bf 4}, (1954), 419--436. 

\bibitem{pHall2}
Hall, P., {\em On the finiteness of certain soluble groups}, Proc. London Math. Soc. {\bf 9} (1959), 595--622.
	 
	 \bibitem{K-M-R} Kellerhals, J., Monod, N., R\"{o}rdam, M., {\em Non-supramenable groups acting on locally compact
	 spaces,} Doc. Math. {\bf 18} (2013), 1597--1626.
	 
	
	\bibitem{desi-EG} Kochloukova, D.H.,  {\em Homological and homotopical finiteness properties of the group $E(G)$},
	Monatsh. Math. {\bf 186} (2018),   93--109. 
	
	\bibitem{KochSidki} Kochloukova, D.; Sidki, S., 
	{\em On weak commutativity in groups}, J. Algebra {\bf 471} (2017), 319--347. 
	
	\bibitem{LO} Lima, B.C.R.;  Oliveira, R.N, {\em Weak commutativity between two isomorphic polycyclic groups},  J. Group Theory, {\bf 19}  (2016),  239--248.
	
	\bibitem{LS}
	Lima, B.C.R.;  Sidki, S.,
	{\em A generalization of weak commutativity between two isomorphic groups},
	J. Group Theory {\bf 2} (2017),  775--792.  
	
	\bibitem{Milnor} Milnor, J., {\em Growth of finitely generated solvable groups}, J. Diff. Geom. {\bf 2} (1968), 447--449. 
	
	\bibitem{Monod} Monod, M., {\em Fixed points in convex cones}, Trans. Amer. Math. Soc. Ser. B, 4 (2017), 68--93. 
	
	\bibitem{mosher} Mosher, L., {\em Central quotients of biautomatic groups}, Comment. Math. Helv. {\bf 72} (1997),  16--29.
	
	
	
	\bibitem{NR} Neumann, W.D.; Reeves, L., {\em Central extensions of word hyperbolic groups},
	Ann. of Math. {\bf 145} (1997), 183--192. 
	
	\bibitem{rips} Rips, E., {\em Subgroups of small cancellation groups}, Bull. London Math. Soc. {\bf 14} (1982),  45--47.
	
	\bibitem{rosen}
	Rosenblatt, J.M.,  {\em Invariant measures and growth conditions},
	 Trans. Amer. Math. Soc. {\bf 193} (1974), 33--53.
	
	\bibitem{Said}  Sidki, S., {\em On weak permutability between groups}, J. Algebra 
	{\bf 63} (1980),   186--225.
	
	\bibitem{Wolf} Wolf, J., {\em Growth of finitely generated solvable groups and curvature of Riemanniann manifolds,} J. Diff. Geom, {\bf 2} (1968), 421--446. 
	
	\bibitem{zheng} Zheng, T., {\em On FC-central extensions of groups of intermediate growth,} arXiv 2001.07814.pdf 
	
	
\end{thebibliography}
\end{document}